\newtheorem{maintheorem}{Theorem}
\newtheorem{theorem}{Theorem}[section]
\newtheorem{proposition}[theorem]{Proposition}
\newtheorem{lemma}[theorem]{Lemma}
\newtheorem{definition}[theorem]{Definition}
\newcommand{\T}{{\cal T}}
\newcommand{\R}{{\cal R}}
\newcommand{\B}{{\cal B}}
\renewcommand{\P}{{\cal P}}
\begin{document}

\title{All feedback arc sets of a random Tur\'an tournament have
	$\lfloor n/k\rfloor-k+1$ disjoint $k$-cliques (and this is tight)
	\thanks{This research was supported by the Israel Science Foundation (grant No. 1082/16).}}

\author{
Safwat Nassar
\thanks{Department of Mathematics, University of Haifa, Haifa
31905, Israel.}
\and
Raphael Yuster
\thanks{Department of Mathematics, University of Haifa, Haifa
31905, Israel. Email: raphy@math.haifa.ac.il}
}

\date{}

\maketitle

\setcounter{page}{1}

\begin{abstract}

What must one do in order to make acyclic a given oriented graph?
Here we look at the structures that must be removed (or reversed) in order to make acyclic a given oriented graph.

For a directed acyclic graph $H$ and an oriented graph $G$,
let $f_H(G)$ be the maximum number of pairwise disjoint copies of $H$ that can be found in {\em all}
feedback arc sets of $G$. In particular, to make $G$ acyclic, one must at least remove (or reverse) $f_H(G)$ pairwise disjoint copies of $H$. Perhaps most intriguing is the case where $H$ is a $k$-clique, in which case the parameter is denoted by $f_k(G)$. Determining $f_k(G)$ for arbitrary $G$ seems challenging.

Here we essentially answer the problem, precisely, for the family of $k$-partite tournaments.
Let $s(G)$ denote the size of the smallest vertex class of a $k$-partite tournament $G$.
It is not difficult to show that $f_k(G) \le s(G)-k+1$ (assume that $s(G) \ge k-1$).

Our main result is that for all sufficiently large $s=s(G)$, there are $k$-partite tournaments for which
$f_k(G) = s(G)-k+1$. In fact, much more can be said: a random $k$-partite tournament $G$
satisfies $f_k(G) = s(G)-k+1$ almost surely (i.e. with probability tending to $1$ as $s(G)$ goes to infinity).
In particular, as the title states, $f_k(G) = \lfloor n/k\rfloor-k+1$  almost surely, where
$G$ is a random orientation of the Tur\'an graph $T(n,k)$.

\vspace*{3mm}
\noindent
{\bf AMS subject classifications:} 05C20, 05C35, 05C80\\
{\bf Keywords:} feedback arc set; random graph; Tur\'an tournament; $k$-clique

\end{abstract}

\section{Introduction}

All graphs in this paper are finite and simple. An {\em orientation} (a.k.a. oriented graph) is obtained by assigning a direction to each
edge of an undirected graph.
Important classes of oriented graphs are {\em tournaments} which are
orientations of a complete graph and {\em multipartite tournaments} which are
orientations of complete multipartite graphs. A multipartite tournament with $k$ parts is a {\em $k$-partite tournament}. For further results and applications of tournaments
and multipartite tournaments see the textbook \cite{BJ-2008}.
An oriented graph without directed cycles is called a {\em directed acyclic graph}.
In this paper {\em acyclic graphs} always refers to directed acyclic graphs.

A natural meta question, studied by various researchers, is the ``complexity'' of an orientation in terms of
its directed cycles. In other words, what must one do in order to make acyclic a given orientation?
In order to address such questions one must inevitably look at feedback arc sets.

For a directed graph $G$, a {\em feedback arc set} is a set of edges covering every directed cycle.
Equivalently, it is a spanning subgraph whose complement is acyclic.
A feedback arc set is {\em minimal} if removing an edge from it results in a non-feedback arc set.
Let $F(G)$ be the set of all minimal feedback arc sets of $G$ and let $A(G)$ denote the set of maximal acyclic subgraphs of $G$. Observe that $A(G)$ consists of the complements of the elements of $F(G)$. Another simple property to observe is that all elements of $F(G)$ are themselves acyclic.
Consequently, an element of $F(G)$ has the property that reversing (instead of removing) its edges
also converts $G$ to an acyclic graph.

The combinatorial and computational aspects of $F(G)$ and its complement $A(G)$ have been studied quite extensively, both for general digraphs as well as for tournaments. Let us just mention here that for a tournament $T$,
the set $F(T)$, in general, has a complicated structure; indeed it is NP-Hard to find an element of $F(T)$ with minimum size \cite{alon-2006}. Similarity, $A(T)$ has a complicated structure;
while it is trivial that there are elements of $A(T)$ whose chromatic number is at least $\sqrt{|T|}$,
it is an open problem to determine (in terms of $|T|$) the asymptotics of the largest chromatic number of an element of $A(T)$ \cite{AHSRT-2013,FKS-preprint,NY-2019}.
There are quite a few nontrivial questions on $F(T)$ and $A(T)$ when considering
random tournaments. For example, almost surely,
the minimum size of a feedback arc set of a random $n$-vertex tournament is
$0.5\binom{n}{2}-Cn^{3/2}+o(n^{3/2}) $ but the exact value of $C$ is not known \cite{delavega-1983,spencer-1971}.

This paper is about a basic problem on the structure of feedback arc sets.
For an acyclic graph $H$ and an oriented graph $G$,
let $f_H(G)$ be the maximum number of pairwise disjoint copies of $H$ that can be found in {\em all}
feedback arc sets of $G$ (if $H$ is not acyclic then clearly $f_H(G)=0$ as the minimal feedback arc sets, being acyclic, do not contain $H$).
In particular, to make $G$ acyclic, one must always at least remove (or, equivalently, reverse the edges of) $f_H(G)$ pairwise disjoint copies of $H$. Perhaps most intriguing is the case where $H$ is a very dense object,
namely, a $k$-clique, in which case the parameter is denoted by $f_k(G)$.
Notice that an acyclic $k$-clique is the (unique) transitive tournament on $k$ vertices.
Determining $f_k(G)$ for arbitrary $G$ seems challenging.

If $G$ is $k$-chromatic, then trivially $f_{k+1}(G)=0$, but how large can $f_k(G)$ be, and are there cases
where it is always very large? To answer this question we must look at the densest $k$-chromatic orientations,
namely at $k$-partite tournaments. For a $k$-partite tournament $G$, let $s(G)$ denote the size of its smallest vertex class. As each $k$-clique of $G$ is a transversal of the vertex classes, obviously there are at most
$s(G)$ vertex-disjoint $k$-cliques in $G$, so $f_k(G) \le s(G)$ is straightforward. But recall that we
are not simply asking for vertex-disjoint $k$-cliques in $G$. Rather, we seek a much stronger requirement:
we are asking for vertex-disjoint $k$-cliques in {\em all feedback arc sets} of $G$.
In fact, it is not difficult to prove that $f_k(G) \le s(G)-k+1$ (or $f_k(G)=0$ if $s(G) \le k-1$).
Indeed, suppose that the vertex classes of $G$ are $V_1,\ldots,V_k$, where $|V_k|=s(G)$.
Let $u_1,\ldots,u_{k-1}$ be $k-1$ distinct vertices of $V_k$.
Consider the spanning subgraph of $G$ obtained by removing all the edges connecting
$u_i$ to all the vertices of $V_i$.
It is clearly a feedback arc set as its complement is a forest of stars.
Hence, there is a minimal feedback arc set in which $u_i$ does not appear in any $k$-clique.
So, the maximum number of pairwise disjoint $k$-cliques we can find in this minimal feedback arc set is
$s(G)-k+1$.

But does it get any worse than that? Or perhaps the above construction is a ``worst example'' in the sense that the obstacle is that there are these $k-1$ vertices in the smallest part which are forced to isolation with another part
in some feedback arc set? Indeed, a corollary of our main result is that for most $k$-partite tournaments,
it cannot get any worse than that. To state our main result we need to recall the notion of random orientations.
Let ${\cal T}(n_1,\ldots,n_k)$ denote the probability space of all $k$-partite tournaments with $n_i$ vertices in the $i$th vertex class. That is, the orientation of each edge is selected uniformly at random, and each choice is independent of all other choices. Assume by symmetry that $n_k = \min_{i=1}^k n_i$. Hence,
for each $G \sim {\cal T}(n_1,\ldots,n_k)$ we have $s(G)=n_k$.
\begin{maintheorem}\label{t:1}
	Let $G \sim {\cal T}(n_1,\ldots,n_k)$. With probability $1-o_{n_k}(1)$ it holds that
	$f_k(G) = s(G)-k+1$.
\end{maintheorem}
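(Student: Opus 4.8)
The plan is first to reduce Theorem~\ref{t:1} to a statement quantified over all linear orders of $V(G)$, and then — this is the point — to prove that statement \emph{without} a union bound over orders (of which there are $\Theta(n!)$, far too many, since for a fixed order the relevant failure event has only exponentially small probability), by instead recasting ``some order is bad'' as the existence in $G$ of an order-free combinatorial witness, and union bounding over witnesses. For the reduction, recall that a minimal feedback arc set of $G$ is exactly the set $B_\pi$ of $\pi$-backward arcs for some linear order $\pi$ of $V(G)$: if $A=E(G)\setminus F$ is a maximal acyclic subgraph with topological order $\pi$, then every $\pi$-forward arc of $G$ already lies in $A$ (otherwise it could be added while keeping all arcs forward, contradicting maximality), so $A$ is the set of $\pi$-forward arcs and $F=B_\pi$; conversely, since $G$ is a multipartite tournament, for any $\pi$ the $\pi$-forward arcs form a maximal acyclic subgraph. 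As $\nu_k(\cdot)$, the maximum number of pairwise vertex-disjoint $k$-cliques, is monotone under adding arcs, $f_k(G)=\min_\pi \nu_k(B_\pi)$. The upper bound $f_k(G)\le s-k+1$ is given, so it remains to show that with probability $1-o(1)$ one has $\nu_k(B_\pi)\ge s-k+1$ for \emph{every} $\pi$. Observe that a $k$-clique of $B_\pi$ is a transversal $T$ of the parts all of whose arcs are $\pi$-backward (equivalently, $G[T]$ is transitive with transitive order the reverse of $\pi|_T$), and each arc of $G$ is $\pi$-backward with probability $\tfrac12$, independently over edges.

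The model case is $k=2$, where $\nu_2(B_\pi)$ is the matching number of a bipartite graph. By the defect form of K\"onig's theorem, $\nu_2(B_\pi)\le s-2$ forces a ``box'': sets $X$ in the larger part and $Y$ in the smaller part with $|X|+|Y|$ exceeding the size of the larger part by at least $2$, such that \emph{every} arc of $G$ between $X$ and $Y$ is $\pi$-forward; conversely, any such box yields a bad $\pi$ (topologically sort the box and extend). The quantifier ``$\exists\pi$'' is now absorbed: such a box exists for some $\pi$ precisely when $G[X\cup Y]$ is an acyclic (transitive) bipartite sub-tournament, which has probability $N(|X|,|Y|)/2^{|X|\,|Y|}$, where $N(a,b)$ is the number of acyclic orientations of $K_{a,b}$ and satisfies $N(a,b)\le (1+\min(a,b))^{a+b+1}\cdot\min(a,b)^{\min(a,b)}$. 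Summing this over the relevant boxes works, because the exponent $|X|\,|Y|\ln 2$ in the denominator dominates both $\ln N(|X|,|Y|)$ and the logarithm of the number of boxes of each shape, except when one of $X,Y$ has bounded size — a regime checked directly — and the threshold comes out exactly as $s-1$.

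For $k\ge 3$, $\nu_k(B_\pi)$ is a $k$-dimensional matching number, for which there is no clean K\"onig/Hall certificate, and this is the crux. The route I would take is to again convert ``$\nu_k(B_\pi)\le s-k$ for some $\pi$'' into the existence of a bounded-complexity \emph{deficiency witness}: exploiting that $B_\pi$ is acyclic — so its transversal $k$-cliques are $\pi$-consistent transitive transversals — one extracts, from a packing that is short, a small sub-collection of the parts across which $G$ induces a transitive sub-tournament on prescribed numbers of vertices per part, so large that $G$ realizes it only with probability small enough to beat a union bound over the (not too many) choices of witness; alternatively, one passes through the fractional relaxation, where every vertex of the smallest part lies in $(1\pm o(1))\,2^{-\binom{k}{2}}\prod_{i<k}n_i$ transversal $k$-cliques of $B_\pi$, so that $\nu_k^{*}(B_\pi)\ge s-o(s)$ for all $\pi$, and then upgrades this near-perfect packing to the exact value $s-k+1$ by an absorption argument with an absorbing structure reserved in advance and robust to the choice of $\pi$. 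I expect this last step — pinning down the correct order-free certificate for $k\ge 3$ and tuning it so that the guaranteed value is exactly $s-k+1$ rather than $s-o(s)$ — to be the hard part of the proof; a secondary technical point is uniformity over very unbalanced part sizes ($n_i\gg s$), where the relevant witnesses should touch only $O(s)$ vertices of the large parts so that the probability estimates still close.
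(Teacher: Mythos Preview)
Your reduction to $\min_\pi \nu_k(B_\pi)$ is correct and matches the paper's setup (the paper works with the forward-arc graph $L_\pi(T)$ rather than $B_\pi$, which is immaterial). Your $k=2$ box-witness argument is essentially sound and is close in spirit to the paper's Property~1; the paper organizes the boundary case differently, proving that in any $L_\pi(T)$ at most one vertex per side can have degree below $n/18$ (Lemma~3.2), then applying Hall after deleting one vertex per side.

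For $k\ge 3$ there is a genuine gap. Your Route~1 (an order-free deficiency witness) is never specified, and since $k$-dimensional matching admits no Hall/K\"onig certificate, it is not clear what such a witness would be or why the union bound over it would close. Your Route~2 contains an actual error: the claim that every vertex of the smallest part lies in $(1\pm o(1))2^{-\binom{k}{2}}\prod_{i<k}n_i$ transversal $k$-cliques of $B_\pi$ is false for adversarial $\pi$. For \emph{any} $G$ and any vertex $v$, placing the in-neighbours of $v$ in some part $A_j$ before $v$ and its out-neighbours in $A_j$ after $v$ gives a $\pi$ for which $v$ has \emph{zero} $B_\pi$-neighbours in $A_j$, hence lies in zero $k$-cliques. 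This is exactly why the answer is $s-k+1$ rather than $s$: the paper shows (Lemma~4.8) that at most $k-1$ vertices per part can simultaneously be ``unfriendly'' in this way for a single $\pi$, and discards precisely those. So the fractional lower bound you need as input to absorption does not hold uniformly in $\pi$, and the absorption step cannot start.

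The paper's approach for $k\ge3$ is not to certify bad $\pi$ by a witness but to isolate four structural properties of $T$ (Lemma~4.5) --- each holding w.h.p.\ because the per-instance failure probability is $2^{-\Theta(n^2)}$ while the number of instances (including the choice of $\pi$) is only $2^{\Theta(n\log n)}$. Conditional on these properties, the argument is deterministic: one first builds an \emph{absorber} $\bigcup_{r=2}^{k-1}Q_r^*$ of $\Theta(\delta n)$ disjoint ``friendly'' $r$-cliques (Lemma~5.1, via a Doob martingale over a random greedy selection), and then extends a perfect $r$-set to a perfect $(r+1)$-set for $r=1,\dots,k-1$ by a bipartite Hall argument between the current $r$-cliques and $A_{r+1}$, where the absorber supplies the minimum degree on the $A_{r+1}$ side and a ``friendly edge'' restriction (Lemmas~6.2 and~6.5) ensures that the $(r+1)$-cliques produced retain enough common neighbours in later parts to continue. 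No fractional relaxation appears. Your worry about unbalanced parts is also unnecessary: since the bound depends only on $s$, the paper reduces at the outset (Section~2) to the case where all parts have size $s$.
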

Hence, not only does there exist a $k$-partite tournament $G$ with $f_k(G) = s(G)-k+1$, but, in fact, most $k$-partite tournaments are such. Observe also that it is trivially inevitable to speak about {\em most} $k$-partite tournaments  and not {\em all} $k$-partite tournaments. Indeed, to take it to extreme, if $G$ itself is an acyclic $k$-partite tournament, then its unique feedback arc set is the empty graph and $f_k(G)=0$ is this case.
Hence, Theorem \ref{t:1} is, in this sense, best possible not only in the exact value $f_k(G)=s(G)-k+1$, but also
in the statement about it holding {\em almost} always.

It is worth pointing out the special case where the $n_i$'s are as equal as possible.
Indeed, as the expression $s(G)-k+1$ only involves the size of the smallest vertex class, it suffices to prove
the lower bound of Theorem \ref{t:1} in the most difficult case where all the $n_i$'s are as equal as possible.
Let therefore ${\cal T}(n,k)$ denote the probability space of all orientations of the Tur\'an graph $T(n,k)$
(such orientations are called {\em Tur\'an tournaments}).
So here the total number of vertices is $n$, each vertex class is of size $\lfloor n/k \rfloor$ or
$\lceil n/k \rceil$, and $G \sim {\cal T}(n,k)$ has $s(G)=\lfloor n/k \rfloor$.
So equivalently, it suffices to prove that for $G \sim {\cal T}(n,k)$, with probability $1-o_{n}(1)$ it holds that
$f_k(G) = \lfloor n/k \rfloor-k+1$.

Perhaps what makes proving Theorem \ref{t:1} rather involved is that although we are
looking at a random structure (in this case the symmetric probability space of all $k$-partite tournaments),
when we want to say that some statement holds for {\em all} feedback arc sets, we must account for the fact that
surely there are many minimal feedback arc sets that look very far from typical random objects (for example,
for sure there are always these minimal feedback arc sets with isolated vertices).
Also, there are many examples in extremal graph theory where although some obvious obstacles for containing a spanning subgraph (say, isolated vertices) are simple to identify, it is not easy to prove that these are the {\em only} obstacles \cite{AY-2013,GPW-2012,ore-1961}. We should also mention the connection of our result (and proof) to the notion of
{\em local resilience} of random graphs \cite{SV-2008} (see also \cite{BLS-2012} concerning local resilience of
random graphs with respect to almost triangle factors). In our proofs, we repeatedly use the fact that {\em all}
minimal feedback arc sets of a typical $G \sim {\cal T}(n,k)$ have many vertices of sufficiently high degree.
Now, if it were true that there are no vertices of small degree, or even if it were true that those vertices of small
degree are ``non-obstacles'', then we could have applied methods from the area of local resilience of random graphs to prove that each such minimal feedback arc set has the required amount of disjoint $k$-cliques. Unfortunately, the sufficiently non-random structure of some minimal feedback arc sets of $G$ makes these (small amount of) small degree vertices
into true obstacles and we do not see a way of using random graph local resilience results as an alternative way to prove Theorem \ref{t:1}.

The case $k=2$ of Theorem \ref{t:1} has a fairly routine proof, which we present in Section 3 as a warm-up. However, the proof for $k \ge 3$ is by far more involved and its proof
comprises Sections 4,5,6. In Section 4 we set up some particular properties that are guaranteed to exist with high probability in a random Tur\'an tournament. Sections 5 and 6 assume, therefore, that a
Tur\'an tournament is given with all these properties. In Section 5 we build (using probabilistic arguments) an absorber that
will help us to gradually build the $k$-cliques from smaller $r$-cliques for $r=2,\ldots,k$ without ``getting stuck'' in the process. The iterative process itself is described in Section 6.
The next section sets up some preliminaries used throughout the paper.

\section{Some preliminaries}

As mentioned in the introduction, it suffices to prove the lower bound in Theorem \ref{t:1} for the probability space of Tur\'an
tournaments, namely when each vertex class is of size $\lfloor n/k \rfloor$ or $\lceil n/k \rceil$.
Furthermore, it suffices to prove it in the case where $k | n$. Indeed, if not,
then we can just remove a single vertex from each part with $\lceil n/k \rceil$ vertices so that all
parts have size $\lfloor n/k \rfloor$. Equivalently, instead of repeatedly using $n/k$ each time,
it is slightly more convenient to assume that each part has size $n$ and there are $k$ parts.
We therefore let $\R(n,k)=\T(nk,k)$ denote the probability space of all $k$-partite tournaments with $n$ vertices in each part.
An equivalent formulation of Theorem \ref{t:1} which will be more convenient to prove is therefore the following:
\begin{theorem}\label{t:1-equiv}
	Asymptotically almost surely, $T \sim \R(n,k)$ has the property that every element of $F(T)$ has
	$n-k+1$ pairwise disjoint $k$-cliques.
\end{theorem}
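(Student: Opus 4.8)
The plan is to first reformulate the statement purely in terms of linear orders, then to isolate deterministic pseudorandomness properties of $T$ that survive the huge number of orders, and finally to build the $k$-cliques by an absorption-assisted iteration.

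\emph{Step 1: reduction to back-arc tournaments.} The minimal feedback arc sets of $T$ are exactly the back-arc sets of linear orders. Indeed, if $B\in F(T)$ then $A:=T\setminus B$ is a maximal acyclic subgraph; fix a topological order $\sigma$ of $A$. Every arc of $A$ is $\sigma$-forward, and conversely any $\sigma$-forward arc $u\to v$ of $T$ missing from $A$ could be added to $A$ without creating a cycle (a cycle would yield a $\sigma$-forward path from $v$ to $u$), contradicting maximality; so $A$ is exactly the set of $\sigma$-forward arcs of $T$ and $B=B_\sigma$, the set of $\sigma$-backward arcs of $T$. Hence it suffices to prove that a.a.s.\ $T\sim\R(n,k)$ has the property that for \emph{every} linear order $\sigma$ of $V(T)$ the tournament $B_\sigma$ contains $n-k+1$ pairwise disjoint $k$-cliques. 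Such a $k$-clique is automatically a transversal of the $k$ parts and, all of its arcs being $\sigma$-backward, is automatically transitive; so we only need $n-k+1$ disjoint transversal $k$-cliques in the \emph{underlying graph} of $B_\sigma$, and for each fixed $\sigma$ that underlying graph is distributed as a random subgraph of the complete $k$-partite graph with all parts of size $n$, each edge kept independently with probability $\tfrac12$.

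\emph{Step 2: deterministic properties of $T$ (Section~4).} A naive union bound over the $(kn)!$ orders is hopeless, so I would not argue about a single $B_\sigma$ directly; instead I would isolate discrepancy/pseudorandomness properties \emph{of $T$ itself} that (i) hold with probability $1-o(1)$, in fact with failure probability $e^{-\Omega(n^2)}$, which easily absorbs the $2^{O(n)}$ choices of the vertex subsets involved, and (ii) \emph{deterministically} force the conclusion of Step~1 for all $\sigma$ at once. Concretely: for all disjoint $X,Y$ in two different parts with $|X|,|Y|\ge\delta n$ the number of $X\to Y$ arcs is $\tfrac12|X||Y|\pm\varepsilon|X||Y|$, plus analogous ``quadrant'' counts involving a third part. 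From these I would extract, for every $\sigma$: (a) all but at most $k-1$ vertices of each part have a $B_\sigma$-neighbourhood of size $(\tfrac12-o(1))(k-1)n$ that meets every other part in $(\tfrac12-o(1))n$ vertices; and (b) each part has at most $k-1$ ``nearly blocked'' vertices, meaning vertices with fewer than $\delta n$ $B_\sigma$-neighbours in some single other part. Statement (b) is where $\sigma$ re-enters: if two vertices $v,v'$ of one part were both nearly blocked from the same part $V_j$, then by the quadrant property there are $w,w'\in V_j$ with $w\to v$, $v'\to w$, $v\to w'$, $w'\to v'$ in $T$, avoiding the few exceptional neighbours, and near-blockedness then forces $v'\prec_\sigma w\prec_\sigma v\prec_\sigma w'\prec_\sigma v'$, a contradiction; since there are only $k-1$ candidate parts $V_j$, at most $k-1$ vertices per part can be nearly blocked. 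We then earmark the (at most $k-1$) nearly blocked vertices of each part as the unused ones and must tile essentially perfectly the rest.

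\emph{Step 3: iterative construction with an absorber (Sections~5--6).} Inside $B_\sigma$, restricted to the good vertices, I would build the transversal $k$-cliques one coordinate at a time: start from a near-perfect matching between $V_1$ and $V_2$, and for $r=3,\dots,k$ extend the current family of $(r-1)$-cliques to $r$-cliques by choosing a system of distinct representatives in $V_r$, each being a common $B_\sigma$-neighbour of all vertices of its clique. Property (a) supplies Hall's condition for this SDR provided the current $(r-1)$-cliques are ``spread out''; to ensure that, and — crucially — to upgrade an almost-perfect tiling to a genuinely perfect one (there is essentially no slack when every part has exactly $k-1$ nearly blocked vertices), I would first set aside a small, randomly selected absorbing structure $\mathcal A$: a family of vertex-disjoint gadgets of $B_\sigma$ such that for every balanced leftover set $R$ of at most $t$ vertices, $\mathcal A\cup R$ has a perfect partition into transversal $k$-cliques of $B_\sigma$. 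The existence of a single $\mathcal A$ that works for all $\sigma$ would again be established probabilistically, with failure probability small enough (at least $e^{-\omega(n\log n)}$) to beat the $(kn)!$ orders. After reserving $\mathcal A$, I would run the iteration on the remaining good vertices, always keeping the partial family spread out, until only a small balanced leftover $R$ remains, and then absorb $R$ into $\mathcal A$.

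\emph{Main obstacle.} The hard part is the near-total absence of slack: in the extreme case where every part contains exactly $k-1$ nearly blocked vertices one needs a \emph{perfect} transversal-$k$-clique factor of the remaining subgraph of $B_\sigma$, simultaneously for all $(kn)!$ orders $\sigma$. Consequently every probabilistic input — the discrepancy of $T$, the existence of the absorber, the ``spreading'' maintained along the iteration — must have failure probability $e^{-\omega(n\log n)}$, and the deterministic deductions must be strong enough to quantify over all $\sigma$ at once; in particular the ``at most $k-1$ nearly blocked vertices per part'' dichotomy has to be robust, since for an exact factor a vertex with a tiny but positive $B_\sigma$-degree into some part is as useless as a fully blocked one. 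Making the absorber compatible with this zero-slack regime, and controlling the final SDR into $V_k$ — which interacts directly with the nearly blocked vertices — is where I expect the bulk of the effort to lie.
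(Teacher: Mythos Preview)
Your high-level plan---reduce to back-arc graphs of linear orders, isolate pseudorandomness properties of $T$ that survive the union over all orders, then build transversal cliques coordinate by coordinate with an absorber---matches the paper's architecture closely. Step~1 is exactly right, and Step~2 is essentially Section~4 (the paper's ``at most $k-1$ unfriendly vertices per part'' argument, Lemma~\ref{l:friendly}, is a cleaner variant of your cycle-in-$\sigma$ sketch: the common $B_\sigma$-non-neighbourhood in $V_j$ of two blocked vertices has size $\ge 15n/17$, yet pigeonholing on position relative to the two vertices in $\sigma$ bounds it by $3n/4+o(n)$).

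There is, however, a genuine conceptual slip in Step~3. You describe $\mathcal A$ as built from gadgets of $B_\sigma$, but then ask for ``a single $\mathcal A$ that works for all $\sigma$'' with failure probability $e^{-\omega(n\log n)}$ over the randomness of $T$ so as to union-bound over the $(kn)!$ orders. These two things are incompatible: a gadget that is a transitive $k$-clique in $B_\sigma$ ceases to be one in $B_{\sigma'}$ for almost every other $\sigma'$, so no $\sigma$-uniform absorber exists. The paper resolves this differently, and the distinction matters. Once Properties~1--4 of Lemma~\ref{l:all-properties} are secured---\emph{those} are the estimates that must beat $(kn)!$, and they do, with $e^{-\Theta(n^2)}$ failure---everything thereafter is deterministic in $T$. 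For each fixed $\sigma$ separately, the absorber $\bigcup_r Q^*_r$ is shown to exist by an \emph{internal} probabilistic argument (Lemma~\ref{l:successful}): one samples $Q^*_r$ at random from the friendly $r$-cliques of $L_\sigma(T)$, and Azuma plus a union bound over fewer than $kn$ vertices yields positive probability of success. The randomness here is over the choice of cliques inside a fixed $(T,\sigma)$, not over $T$, so no union over $\sigma$ is ever taken for the absorber and ``positive probability'' is all that is needed. Your stated main obstacle---that ``every probabilistic input must have failure probability $e^{-\omega(n\log n)}$''---therefore applies only to the discrepancy properties of Step~2, not to the absorber, and recognising this is what makes the scheme go through.
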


The following are some standard notations that will be repeatedly used.
Edges of directed graphs are denoted by ordered pairs $(u,v)$ where $v$ is called an {\em out-neighbor of $u$} and $u$ is called an {\em in-neighbor} of $v$. Edges of undirected graphs are denoted as $uv$.
If $G$ is a (directed or undirected) graph, $V(G)$ and $E(G)$ denote its vertex set and edge set, respectively. For $T \sim \R(n,k)$, we let $N^+(v)$ denote the out-neighbors of $v$ 
and let $N^-(v)$ denote the in-neighbors of $v$. If we wish to consider out-degrees and in-degrees in
some subgraph $H$, we will use the notations $N_H^+(v)$ and $N_H^-(v)$.

We will require a few large deviation inequalities of random variables.
The proof of each of them can be found in the book
\cite{AS-2004}. The first one is the Chernoff bound on the concentration of the binomial distribution
$\B(m,p)$ where $p$ is fixed.
\begin{lemma}\label{l:chernoff}
	Let $0 < p < 1$. Suppose $X \sim \B(m,p)$, then for every $a > 0$,
	$$
	\Pr[|X -mp| > a] < 2e^{-a^2/m}\;.
	$$
\end{lemma}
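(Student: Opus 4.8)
The plan is to use the standard exponential moment method of Bernstein and Chernoff; in fact the argument yields the slightly stronger bound $\Pr[|X-mp|>a] < 2e^{-2a^2/m}$, from which the stated inequality is immediate. Write $X = \sum_{i=1}^m X_i$ where the $X_i$ are independent Bernoulli random variables with $\Pr[X_i=1]=p$, and set $Y_i = X_i - p$, so that $E[Y_i]=0$ and $Y_i \in [-p,\,1-p]$, an interval of length $1$.

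First I would handle the upper tail. For any $\lambda > 0$, Markov's inequality applied to the nonnegative random variable $e^{\lambda(X-mp)}$ together with independence of the $Y_i$ gives
$$
\Pr[X - mp > a] \le e^{-\lambda a}\, E\!\left[e^{\lambda \sum_{i=1}^m Y_i}\right] = e^{-\lambda a}\prod_{i=1}^m E\!\left[e^{\lambda Y_i}\right].
$$
The key analytic step is Hoeffding's lemma: if $Y$ has $E[Y]=0$ and takes values in an interval of length $\ell$, then $E[e^{\lambda Y}] \le e^{\lambda^2 \ell^2/8}$. I would prove this by setting $\psi(\lambda) = \log E[e^{\lambda Y}]$ and noting that $\psi(0)=0$, $\psi'(0)=E[Y]=0$, while $\psi''(\lambda)$ equals the variance of $Y$ under the exponentially tilted law with density proportional to $e^{\lambda Y}$; this tilted law is again supported in an interval of length $\ell$, so by Popoviciu's inequality its variance is at most $\ell^2/4$, and a second-order Taylor expansion of $\psi$ gives $\psi(\lambda) \le \lambda^2 \ell^2/8$. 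Applying this with $\ell = 1$ to each $Y_i$ yields $\prod_i E[e^{\lambda Y_i}] \le e^{m\lambda^2/8}$, hence $\Pr[X-mp>a] \le e^{-\lambda a + m\lambda^2/8}$.

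Now I would optimize over $\lambda$: the exponent $-\lambda a + m\lambda^2/8$ is minimized at $\lambda = 4a/m$, where it equals $-2a^2/m$; thus $\Pr[X - mp > a] \le e^{-2a^2/m}$. For the lower tail, the identical computation applied to $-Y_i = p - X_i$ (again centered, and again lying in an interval of length $1$) gives $\Pr[X - mp < -a] \le e^{-2a^2/m}$. Combining the two tails by the union bound yields $\Pr[|X-mp|>a] \le 2e^{-2a^2/m} < 2e^{-a^2/m}$, as required.

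The only step carrying any content is Hoeffding's lemma, i.e. the uniform bound on the moment generating function of a bounded centered random variable; everything else is the routine Chernoff optimization and symmetrization. An alternative to the tilted-measure argument there is to verify the inequality $p e^{\lambda(1-p)} + (1-p) e^{-\lambda p} \le e^{\lambda^2/8}$ directly for the centered Bernoulli, by comparing the logarithms of both sides and checking the resulting single-variable inequality with elementary calculus; I would present whichever of the two is cleaner in context.
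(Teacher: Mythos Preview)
Your argument is correct; it is the standard Hoeffding/Chernoff exponential-moment proof, and as you note it even yields the stronger constant $2$ in the exponent. The paper does not give its own proof of this lemma at all: it simply states it as a preliminary fact and refers the reader to the textbook \cite{AS-2004}. So there is nothing to compare against, and your write-up would serve perfectly well as a self-contained justification should one be wanted.
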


The second one is a special case of Janson's inequality.
\begin{lemma}\label{l:janson} 
	Let $X_1,\ldots,X_m$ be indicator random variables, each $X_i$ corresponding to the event that a certain subset $V_i$ of vertices of a random graph induces some fixed subgraph. Suppose that the success probability of each $X_i$ is $p$.
	Let $\Delta$ be the number of ordered pairs $(X_i,X_j)$ such that $|V_i \cap V_j| \ge 2$.
	Then for every  $\gamma > 0$,
	$$
	\Pr\left[\sum_{i=1}^m X_i \le (1-\gamma)mp\right] < e^{-\gamma^2 mp/(2+\frac{\Delta}{m})}\;.
	$$
\end{lemma}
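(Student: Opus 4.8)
The plan is to deduce this lemma directly from the standard lower-tail version of Janson's inequality recorded in \cite{AS-2004}, after checking that, under the stated hypotheses, the two quantities that enter that inequality --- the mean of $\sum_i X_i$ and its dependency parameter --- collapse to $mp$ and to at most $\Delta p$, respectively.

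First I would name the events: let $B_i=\{X_i=1\}$ be the event that $V_i$ induces the prescribed subgraph, put $X=\sum_{i=1}^m X_i$, and let $\mu = E[X]$. Since $\Pr[B_i]=p$ for all $i$, linearity of expectation gives $\mu = mp$. Next I would read off the dependency structure. Each $B_i$ is determined solely by the random choices (presences/orientations of the potential edges) attached to pairs of vertices lying inside $V_i$; a potential edge can be relevant to both $B_i$ and $B_j$ only if both of its endpoints lie in $V_i\cap V_j$. Hence when $|V_i\cap V_j|\le 1$ the events $B_i$ and $B_j$ are functions of disjoint, independent families of random choices and are therefore mutually independent. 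Consequently the only ordered pairs $(i,j)$ with $i\ne j$ that can fail to be independent are precisely those with $|V_i\cap V_j|\ge 2$, i.e.\ exactly the $\Delta$ pairs counted in the statement. For any such pair the trivial inclusion $B_i\cap B_j\subseteq B_i$ gives $\Pr[B_i\cap B_j]\le\Pr[B_i]=p$, so the Janson dependency parameter $\bar\Delta := \sum \Pr[B_i\cap B_j]$, the sum being over the ordered dependent pairs, satisfies $\bar\Delta\le\Delta p$.

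Now I would invoke the bound from \cite{AS-2004}: for $0<\gamma\le 1$,
$$\Pr[X\le(1-\gamma)\mu]\le e^{-\gamma^2\mu/(2+\bar\Delta/\mu)}.$$
The exponent is monotone in $\bar\Delta$, so substituting $\mu=mp$ and $\bar\Delta\le\Delta p$ bounds it by
$$\frac{\gamma^2 mp}{2+(\Delta p)/(mp)}=\frac{\gamma^2 mp}{2+\Delta/m},$$
which is the asserted estimate for $0<\gamma\le 1$ (strictness is harmless and immediate, the right-hand side being positive). For $\gamma>1$ the claim is vacuous, since then $(1-\gamma)mp<0\le X$ and the left-hand side is $0$. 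The one step deserving genuine attention is the independence assertion for pairs with $|V_i\cap V_j|\le 1$ --- this is exactly the point at which the hypothesis that each $X_i$ records whether a vertex set induces a fixed subgraph of a \emph{random} graph is used; everything else is bookkeeping with the quoted inequality, so no serious obstacle arises.
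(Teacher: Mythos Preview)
Your derivation is correct, and it matches the paper's treatment: the paper does not actually prove this lemma but simply cites it as a special case of Janson's inequality from \cite{AS-2004}. Your write-up makes explicit the two routine observations that the citation leaves implicit --- that events sharing at most one vertex are independent (so only the $\Delta$ ordered pairs with $|V_i\cap V_j|\ge 2$ contribute to the dependency parameter), and that the crude bound $\Pr[B_i\cap B_j]\le p$ turns $\bar\Delta/\mu$ into $\Delta/m$ --- and then plugs into the quoted inequality.
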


The third one is Azuma's inequality.
\begin{lemma}\label{l:azuma}
	Let $Y_0,\ldots,Y_m$ be a martingale with $|Y_{i+1}-Y_i| \le 1$ for all $0 \le i < m$. Then for
	all $\lambda > 0$,
	$$
	\Pr\left[Y_m < Y_0 -\lambda \sqrt{m}\right] < e^{-\lambda^2/2}\;.
	$$
\end{lemma}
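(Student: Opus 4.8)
The plan is to run the standard exponential-moment (Chernoff-type) argument, carried out conditionally against the underlying filtration. First I would reduce to the case $Y_0=0$ by replacing each $Y_i$ with $Y_i-Y_0$; this changes neither the event $\{Y_m-Y_0<-\lambda\sqrt m\}$ nor the hypothesis $|Y_{i+1}-Y_i|\le 1$. Fix a filtration $\mathcal{F}_0\subseteq\cdots\subseteq\mathcal{F}_m$ with respect to which $(Y_i)$ is a martingale, and set $X_i=Y_i-Y_{i-1}$, so that $X_i$ is $\mathcal{F}_i$-measurable, $|X_i|\le 1$, and $E[X_i\mid\mathcal{F}_{i-1}]=0$. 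For any $t>0$ the event $\{Y_m<-\lambda\sqrt m\}$ is contained in $\{e^{-tY_m}>e^{t\lambda\sqrt m}\}$, so Markov's inequality gives
\[
\Pr\!\left[Y_m<-\lambda\sqrt m\right]\le e^{-t\lambda\sqrt m}\,E\!\left[e^{-tY_m}\right].
\]

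The crux is the pointwise estimate $E[e^{-tX_i}\mid\mathcal{F}_{i-1}]\le\cosh t$. Since $x\mapsto e^{-tx}$ is convex, on $[-1,1]$ it lies below the chord through its endpoint values: $e^{-tx}\le\frac{1+x}{2}e^{-t}+\frac{1-x}{2}e^{t}=\cosh t-x\sinh t$ for all $x\in[-1,1]$. Substituting $x=X_i$ (legitimate because $|X_i|\le 1$) and taking conditional expectation, the term in $\sinh t$ disappears because $E[X_i\mid\mathcal{F}_{i-1}]=0$, leaving $E[e^{-tX_i}\mid\mathcal{F}_{i-1}]\le\cosh t$. Comparing Taylor series, $\cosh t=\sum_{n\ge 0}\frac{t^{2n}}{(2n)!}\le\sum_{n\ge 0}\frac{t^{2n}}{2^n n!}=e^{t^2/2}$, with strict inequality whenever $t\ne 0$ since $(2n)!>2^n n!$ for $n\ge 2$. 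Hence $E[e^{-tX_i}\mid\mathcal{F}_{i-1}]\le\cosh t<e^{t^2/2}$ for every $t>0$.

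Now I peel the differences off one at a time. Because $Y_{m-1}$ is $\mathcal{F}_{m-1}$-measurable,
\[
E\!\left[e^{-tY_m}\right]=E\!\left[e^{-tY_{m-1}}\,E\!\left[e^{-tX_m}\mid\mathcal{F}_{m-1}\right]\right]\le\cosh t\cdot E\!\left[e^{-tY_{m-1}}\right],
\]
and iterating this down to $Y_0=0$ yields $E[e^{-tY_m}]\le(\cosh t)^m<e^{mt^2/2}$ for $t>0$ (and $m\ge 1$; the case $m=0$ is vacuous). Plugging this into the Markov bound gives $\Pr[Y_m<-\lambda\sqrt m]<e^{mt^2/2-t\lambda\sqrt m}$ for every $t>0$; the exponent is minimized at $t=\lambda/\sqrt m$, where it equals $-\lambda^2/2$, and choosing this value completes the proof. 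The only genuinely delicate point is the chord/convexity bound of the middle paragraph together with the observation that conditioning annihilates the linear term; the remainder is the routine telescoping of conditional expectations (valid precisely because each $Y_{i-1}$ is measurable with respect to the $\sigma$-algebra being conditioned on) and a one-variable optimization. I anticipate no real obstruction, and in particular the argument uses nothing about the provenance of the martingale, so it applies verbatim to the vertex- and edge-exposure (Doob) martingales invoked elsewhere in the paper.
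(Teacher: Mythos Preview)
Your argument is the standard exponential-moment proof of Azuma's inequality and is correct in all details, including the strict inequality (obtained from $\cosh t < e^{t^2/2}$ for $t>0$). The paper itself does not prove this lemma at all: it is stated as a preliminary with the remark that its proof can be found in the cited textbook \cite{AS-2004}, so there is no ``paper's own proof'' to compare against beyond noting that what you wrote is essentially the textbook argument.
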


\section{The bipartite case}

In this section we prove the case $k=2$ of Theorem \ref{t:1}, stated in its equivalent formulation
Theorem \ref{t:1-equiv} that a random element of $\R(n,2)$ has the property that all of its feedback arc sets have a matching of size $n-1$. Specifically we prove:
\begin{proposition}\label{p:k=2}
Asymptotically almost surely, $T \sim \R(n,2)$ has the property that every element of $F(T)$ has
a matching of size $n-1$.
\end{proposition}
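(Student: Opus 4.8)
The plan is to first translate the statement ``every feedback arc set contains a large matching'' into a single statement about $T$ itself, and then prove that statement by a union bound. Write the vertex classes of $T\sim\R(n,2)$ as $A$ and $B$, each of size $n$. Recall from the Introduction that $F\in F(T)$ exactly when $D:=T\setminus F$ is a maximal acyclic subgraph of $T$, and that in any case $D$ is acyclic. By K\"onig's theorem applied to the bipartite graph $F$, together with the identity $\nu(F)=2n-\alpha(F)$, the graph $F$ has a matching of size $n-1$ if and only if it has no independent set of size $n+2$; and an independent set of $F$ is exactly a pair $A'\subseteq A$, $B'\subseteq B$ with no $F$-edge between $A'$ and $B'$, i.e.\ every $T$-edge between $A'$ and $B'$ lies in $D$ --- which forces $T[A',B']$ to be acyclic. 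Conversely, any acyclic $T[A',B']$ extends greedily to a maximal acyclic subgraph $D\supseteq T[A',B']$, and then $F:=T\setminus D\in F(T)$ has $A'\cup B'$ as an independent set, hence no matching of size $n-1$. Thus Proposition~\ref{p:k=2} is equivalent to the assertion that, asymptotically almost surely, there is \emph{no} pair $A'\subseteq A$, $B'\subseteq B$ with $|A'|+|B'|=n+2$ for which $T[A',B']$ is acyclic; note that here $|A'|,|B'|\ge2$ automatically, since each class has only $n$ vertices.

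Next I would record two elementary structural facts. (i) A bipartite tournament is acyclic if and only if it contains no directed $4$-cycle: in a shortest directed cycle $v_1 v_2\cdots v_{2\ell}v_1$ with $\ell\ge3$, the tournament edge joining $v_1$ and $v_4$ (which lie in different classes) is oriented either so as to close a directed $4$-cycle or so as to produce a strictly shorter directed cycle --- a contradiction in either case. (ii) With $A'=\{a_1,\dots,a_a\}$ and $S_i:=N^+(a_i)\cap B'$, a directed $4$-cycle through $a_i$ and $a_j$ exists precisely when $S_i$ and $S_j$ are incomparable, so $T[A',B']$ is acyclic if and only if $S_1,\dots,S_a$ form a chain under inclusion. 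Counting such chain-tuples of subsets --- sort the $S_i$ by size into a multichain and then permute back, doing this on whichever of the two classes is the smaller one --- shows that $K_{a,b}$ has at most $\min(a,b)!\cdot(\min(a,b)+1)^{\max(a,b)}$ acyclic orientations, so for fixed $A',B'$ with $|A'|=a$ and $|B'|=b$,
\[
\Pr\big[\,T[A',B']\text{ acyclic}\,\big]\;\le\;\frac{\min(a,b)!\,\bigl(\min(a,b)+1\bigr)^{\max(a,b)}}{2^{ab}}\,.
\]

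Finally I would apply a union bound over all pairs $A',B'$ with $|A'|+|B'|=n+2$, parametrized by the size $b$ of the smaller class (so $2\le b\le (n+2)/2$, and the other class has size $n+2-b$). Up to a factor $2$ accounting for which of $A,B$ contains the smaller class, there are $\binom{n}{b}\binom{n}{n+2-b}$ relevant pairs for each $b$; since $\binom{n}{b}\,b!\le n^{b}$ (a falling factorial) and $\binom{n}{n+2-b}=\binom{n}{b-2}\le n^{b-2}$, the contribution of a given $b$ to the bad probability is at most $2\,n^{2b-2}\bigl((b+1)/2^{b}\bigr)^{\,n+2-b}$. Using that $(b+1)/2^{b}\le\tfrac34$ for every $b\ge2$ (and decays geometrically in $b$) and that $n+2-b\ge n/2$, a short computation shows this quantity is at most $n^{-4}$ for $n$ large, uniformly in $b$, so the whole sum over the at most $n$ values of $b$ is $o(1)$, as required. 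The step that needs genuine care --- and the reason the trivial estimate ``$K_{a,b}$ has at most $(a+b)!$ acyclic orientations'' is worthless here --- is the ``thin'' regime in which one class has only a bounded number of vertices (as few as $2$) while the other has about $n$: there both $\binom{n}{b}$ and $\binom{n}{n+2-b}$ are only polynomially small, and the union bound survives solely because fact (ii) limits an acyclic orientation of, say, $K_{a,2}$ to one of only $O(3^{a})$ possibilities rather than $2^{2a}$. This also explains why $k=2$ is much more tractable than $k\ge3$: acyclicity of a bipartite tournament is governed by one small obstruction, the directed $4$-cycle, for which there is no analogue in general $k$-partite tournaments.
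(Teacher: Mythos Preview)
Your proof is correct and takes a genuinely different route from the paper's.

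The paper works permutation by permutation: for each $\pi\in\mathcal{S}_{V(T)}$ it looks at the forward-edge graph $L_\pi(T)$, first establishes (via Chernoff) two deterministic ``Properties'' of $T$, and from these deduces by a two-vertex pigeonhole argument that at most one vertex on each side of $L_\pi(T)$ can have degree below $n/18$; removing one vertex from each side and applying Hall then gives the matching of size $n-1$. You instead use K\"onig--Gallai to convert ``every $F\in F(T)$ has a matching of size $n-1$'' into a single structural statement about $T$ itself --- no pair $A',B'$ with $|A'|+|B'|=n+2$ has $T[A',B']$ acyclic --- and then attack that directly with the chain characterisation of acyclic bipartite tournaments and a union bound over all such pairs.

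What each approach buys: yours is shorter, more elementary, and entirely self-contained for $k=2$; the sharp count $\min(a,b)!\,(\min(a,b)+1)^{\max(a,b)}$ for acyclic orientations of $K_{a,b}$ disposes of the delicate ``thin'' regime (one side of size $2$ or $3$, the other of size about $n$) in a single stroke, whereas the paper needs its separate low-degree lemma precisely to handle that regime. The paper's approach, by contrast, is designed to set up the machinery --- $L_\pi(T)$, friendly vertices, the $X(T,\pi,R,S)$ events --- that carries over to $k\ge 3$; your chain characterisation is exactly the special feature of bipartite tournaments that has no analogue for larger $k$, as you yourself note at the end.
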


Before presenting the proof we establish some notation.
We denote the vertex parts of elements of $\R(n,2)$ by $A_1$ and $A_2$ where $|A_1|=|A_2|=n$.
Let ${\cal S}_V$ denote the set of permutations of a set $V$.
Let $\pi \in {\cal S}_{V(T)}$ denote a permutation of the vertices $V(T) = A_1 \cup A_2$ of $T \sim \R(n,2)$.

Recall from the introduction that if $H \in F(T)$ is a minimal feedback arc set, then $H$
is acyclic. In particular, there is a topological sort of its vertices,
namely, a permutation $\pi \in {\cal S}_{V(T)}$ such that $(u,v) \in E(T)$ has the property that
$\pi(u) < \pi(v)$ if and only if $(u,v) \in E(H)$. Hence, we can associate with each
$H \in F(T)$ a permutation $\pi$ as above (observe that $\pi$ is not necessarily unique,
as a topological sort is not necessarily unique). More conveniently,
\begin{definition}[$L_\pi(T)$]
For $\pi \in {\cal S}_{V(T)}$ let $L_\pi(T)$ be the spanning ordered acyclic subgraph of $T$ where the vertices are ordered according to $\pi$ and which consists of all the edges $(u,v) \in E(T)$ with $\pi(u) < \pi(v)$ (edges that go from ``left to right''). 
\end{definition}
As the orientation of the edges of $L_\pi(T)$ is determined from $\pi$, we may view $L_\pi(T)$ as an undirected graph.
Observe also that there is an onto correspondence from the set of all $|V(T)|!$ possible $L_\pi(T)$ to $F(T)$. It will be more convenient to prove Theorem \ref{t:1-equiv} as well as Proposition \ref{p:k=2}
by considering all possible  $L_\pi(T)$. So, in particular, proving proposition \ref{p:k=2} amounts
to proving that for $T \sim \R(n,2)$ it holds a.a.s. that for {\em all} $\pi \in {\cal S}_{V(T)}$
the graph $L_\pi(T)$ has a matching of size $n-1$.

One minor difficulty in proving this is that, although it is very easy to prove that
for any given $\pi \in {\cal S}_{V(T)}$, an element $T \sim \R(n,2)$ has with very high probability
the property that $L_\pi(T)$ has a matching of order $n-1$ (in fact, in most cases, a perfect matching), this probability is not high enough so
as to apply the union bound of the complement event for all $(2n)!$ permutations.
In fact, it is not difficult to show that a.a.s. $T \sim \R(n,2)$ has $|F(T)|$ of order $2^{\Theta(n \log n)}$ so we cannot ``save much'' by just considering a representative $\pi$ for each minimal feedback arc set.

To overcome this obstacle, we need to first establish a few properties that are guaranteed to exist in $T \sim \R(n,2)$
with high probability. To state these properties we require further notation.

Let $R \subseteq A_1$ and let $S \subseteq A_2$. For a permutation $\pi$
let $X(T,\pi,R,S)$ denote the event that in $L_\pi(T)$, there is no edge between $R$ and $S$.
Observe that $\bigcup_{\pi \in {\cal S}_{V(T)}} X(T,\pi,R,S)$ is just the event that $T[R \cup S]$ is acyclic.

\begin{definition}[$D$-consistency]\label{d:consistency}
For a vector $D \in \{+,-\}^d$ and for a sequence $A'$ of $d$ distinct vertices of $T$, let
$C_D(A')$ be the subset of vertices of $T$ that are {\em $D$-consistent with $A'$}.
Namely, the $j$th vertex of $A'$ has all the vertices of $C_D(A')$ as its out-neighbors in $T$ if $D(j)$ is plus and has all the vertices of $C_D(A')$ as its in-neighbors in $T$ if $D(j)$ is minus.	
\end{definition}

\noindent
For example, if $D=(+,-)$ then $C_D((u,v))$ is $N^+(u) \cap N^-(v)$.

\begin{lemma}\label{l:1}
	For all $n$ sufficiently large the following holds for $T \sim \R(n,2)$.
	\begin{enumerate}
		\item
		$$
		\Pr\left[\bigcup_{R \subseteq A_1,|R| \ge n/20} ~ \bigcup_{S \subseteq A_2,|S| \ge n/20} ~ \bigcup_{\pi \in {\cal S}_{V(T)}}
 		X(T,\pi,R,S)\right] \le \frac{1}{n}\;.
 		$$
 		\item
 		Let $D \in \{+,-\}^2$ and let $(u,v)$ be an ordered pair of two distinct vertices.\
 		$$
 		\Pr\left[|C_D((u,v))| \ge 1.1n/4 \right] \le \frac{1}{n^3}\;.
 		$$
\end{enumerate}
\end{lemma}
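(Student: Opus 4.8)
The plan is to establish the two items separately: item (1) needs one structural observation to beat a crude union bound, whereas item (2) is a routine concentration estimate. I expect the only real difficulty to lie in item (1).

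For item~(1), I would start from the remark already recorded above, that $\bigcup_{\pi\in{\cal S}_{V(T)}}X(T,\pi,R,S)$ is precisely the event that $T[R\cup S]$ is acyclic: in $\R(n,2)$ every edge of $T[R\cup S]$ runs between $R$ and $S$, so reversing a topological order of $T[R\cup S]$ yields a permutation witnessing $X$, and conversely any such permutation forces a topological order on $T[R\cup S]$. Hence the probability to bound is $\Pr[\,\exists\, R\subseteq A_1,\ S\subseteq A_2,\ |R|,|S|\ge n/20,\ T[R\cup S]\text{ acyclic}\,]$, and since there are at most $2^n\cdot 2^n=4^n$ such pairs $(R,S)$ it suffices to show that for each fixed pair $\Pr[T[R\cup S]\text{ acyclic}]$ is as small as $2^{-\Omega(n^2)}$. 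For this I would use the elementary fact that if $T[R\cup S]$ is acyclic then for all $a,a'\in R$ the sets $N^+(a)\cap S$ and $N^+(a')\cap S$ are comparable under inclusion: otherwise any $b\in(N^+(a)\cap S)\setminus N^+(a')$ and $b'\in(N^+(a')\cap S)\setminus N^+(a)$ produce the directed $4$-cycle $a\to b\to a'\to b'\to a$. Now partition $R$ into $\lfloor |R|/2\rfloor\ge\lfloor n/40\rfloor$ disjoint pairs; the comparability events of distinct pairs involve disjoint edge sets of $T$ and are therefore independent, while a short inclusion--exclusion over $b\in S$ shows that a single pair is comparable with probability $2(3/4)^{|S|}-(1/2)^{|S|}\le 2(3/4)^{|S|}\le 2(3/4)^{n/20}$. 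Consequently $\Pr[T[R\cup S]\text{ acyclic}]\le\bigl(2(3/4)^{n/20}\bigr)^{\lfloor n/40\rfloor}=2^{-\Omega(n^2)}$, and even after multiplying by the $4^n$ choices of $(R,S)$ the total probability stays well below $1/n$ for $n$ large.

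For item~(2), note that $C_D((u,v))$ can contain only a vertex $w$ that is adjacent in $T$ to both $u$ and $v$. In $\R(n,2)$ with vertex parts $A_1,A_2$, if $u$ and $v$ lie in different parts then no such $w$ exists, so $|C_D((u,v))|=0$ deterministically and the stated event has probability $0$. If $u$ and $v$ lie in the same part, the candidate set is exactly the opposite part, of size $n$, and for each such $w$ the two edges $uw,vw$ receive the orientations prescribed by $D$ independently with probability $\tfrac14$, these events being mutually independent over the $n$ choices of $w$. Thus $|C_D((u,v))|\sim\B(n,\tfrac14)$ with mean $n/4$, and Lemma~\ref{l:chernoff} applied with $a=0.1\cdot n/4=n/40$ gives $\Pr[\,|C_D((u,v))|\ge 1.1\,n/4\,]\le 2e^{-(n/40)^2/n}=2e^{-n/1600}$, which is below $1/n^3$ once $n$ is sufficiently large.

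The crux is item~(1), and specifically the need to extract $\Theta(n)$ independent obstruction events (the disjoint pairs in $R$), each failing with probability exponentially small in $|S|=\Theta(n)$; only this double-exponential saving $2^{-\Omega(n^2)}$ is strong enough to beat the $4^n$ union-bound factor over the pairs $(R,S)$. Using just a bounded number of potential short cycles, or union-bounding over all $(2n)!$ permutations directly, would yield merely a decay $c^{\,n}$ with a fixed $c<1$, which is insufficient.
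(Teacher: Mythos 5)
Your item~(2) is essentially identical to the paper's: identify $|C_D((u,v))|$ as $\B(n,1/4)$ when $u,v$ lie in the same part (and zero otherwise) and apply Chernoff.

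Your item~(1) is correct but takes a genuinely different route. The paper does the most direct thing: for a \emph{fixed} triple $(\pi,R,S)$, the event $X(T,\pi,R,S)$ says that each of the $|R||S|$ edges between $R$ and $S$ is oriented ``right to left'' in $\pi$, and these orientations are independent, so $\Pr[X(T,\pi,R,S)]=2^{-|R||S|}\le 2^{-n^2/400}$. It then union-bounds over the at most $2^n\cdot 2^n\cdot(2n)!=2^{O(n\log n)}$ triples, and the $2^{-n^2/400}$ factor crushes this easily. You instead collapse the union over $\pi$ into the single event ``$T[R\cup S]$ is acyclic'' (an observation the paper also records but does not use for the bound), prove the nice structural fact that acyclicity forces the out-neighborhoods $N^+(a)\cap S$ to form a chain, extract $\lfloor|R|/2\rfloor$ independent comparability events each of probability $\le 2(3/4)^{|S|}$, and union-bound only over $4^n$ pairs $(R,S)$. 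Both arguments are valid; yours is arguably more structural, while the paper's is shorter because it never needs to reason about acyclic bipartite tournaments at all.

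One thing to flag: your closing paragraph asserts that ``union-bounding over all $(2n)!$ permutations directly would yield merely a decay $c^n$ with a fixed $c<1$, which is insufficient.'' That is not right. For a fixed $\pi$ the probability of $X(T,\pi,R,S)$ is already $2^{-|R||S|}=2^{-\Omega(n^2)}$, not $c^n$, and $(2n)!\cdot 4^n=2^{O(n\log n)}$ is negligible against it. So the direct permutation union bound is in fact the paper's proof and it works; your extra structural work, while correct and pleasant, is not necessary to beat the union bound.
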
 
\begin{proof}
As the orientation of each edge of $T$ connecting a vertex of $R$ with a vertex of $S$ is chosen uniformly
and independently at random, we have for any given $\pi$ that $\Pr[X(T,\pi,R,S)] = 2^{-|R||S|}$.
As there are at most $2^n$ choices for $S$, at most $2^n$ choices for $R$, and $(2n)!$ choices for $\pi$
we have that for $n$ sufficiently large, the union event in the first statement of the lemma holds with probability at most
$$
2^n \cdot 2^n \cdot (2n)! 2^{-n^2/400} \le \frac{1}{n}\;.
$$
For the second statement of the lemma, fix $D \in \{+,-\}^2$ and
fix the ordered pair $(u,v)$. We may assume that $u,v$ are either both in $A_1$ or both in $A_2$ as otherwise $C_D((u,v))= \emptyset$.
Therefore, $|C_D((u,v))|$ is a random variable which is the sum of $n$ independent indicator
random variables with success probability $1/4$.
Hence, its distribution is $\B(n,1/4)$.
By Lemma \ref{l:chernoff}, $|C_D((u,v))|$ satisfies the claimed bound with probability at most $1/n^3$.
\end{proof}

Notice that the number of events corresponding to Item 2 in Lemma \ref{l:1}
is only $O(n^2)$ as there are only $4$ choices for $D$ and at most $n^2$
choices for $(u,v)$ where $u,v$ are distinct vertices both from the same part.
Now, since $1-1/n-O(n^2)/n^3 \ge 1-o_n(1)$ it follows that an element
$T \sim \R(n,2)$ satisfies both properties that correspond to the items
of Lemma \ref{l:1} with probability $1-o_n(1)$:\\
{\bf Property 1:} $X(T,\pi,R,S)$ does not occur for each $\pi$ and for each choice of
$R \subseteq A_1$ and $S \subseteq A_2$ satisfying $|R| \ge n/20$, $|S| \ge n/20$.\\
{\bf Property 2:} $|C_D((u,v))| \le (1.1)n/4$ for each choice of
$D \in \{+,-\}^2$ and for each ordered pair $(u,v)$ of two distinct vertices of $T$.

Consider any $T \sim \R(n,2)$ satisfying these two properties.
We will prove that for every $\pi \in {\cal S}_{V(T)}$, the graph $L_\pi(T)$ has a matching of 
size $n-1$. Observe that this is a completely deterministic claim.
So, from now until the end of this section, fix $\pi$ and fix $T \sim \R(n,2)$ satisfying Properties
1 and 2.

\begin{lemma}\label{l:3}
	There is at most one vertex $a \in A_1$ whose degree in $L_\pi(T)$ is smaller than $n/18$.
	Similarly, there is at most one vertex $b \in A_2$ whose degree in $L_\pi(T)$ is smaller than $n/18$.
\end{lemma}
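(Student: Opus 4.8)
The plan is to argue by contradiction, and the only tool needed is Property~2. Suppose $a_1,a_2\in A_1$ are two distinct vertices, each of degree smaller than $n/18$ in $L_\pi(T)$. Since $T$ is $2$-partite, every edge of $L_\pi(T)$ incident with a vertex of $A_1$ has its other endpoint in $A_2$, so each $a_i$ has more than $n-\tfrac{n}{18}=\tfrac{17n}{18}$ vertices of $A_2$ that are \emph{non}-neighbors of it in $L_\pi(T)$. By inclusion--exclusion, the set $W\subseteq A_2$ of vertices that are non-neighbors of both $a_1$ and $a_2$ in $L_\pi(T)$ satisfies $|W|>2\cdot\tfrac{17n}{18}-n=\tfrac{8n}{9}$.

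Next I would determine the possible orientation patterns of the edges joining $W$ to $\{a_1,a_2\}$ using the positions of $a_1,a_2$ in $\pi$. Assume without loss of generality that $\pi(a_1)<\pi(a_2)$, and partition $A_2$ into the three blocks determined by these two positions: the vertices lying before $a_1$, those lying strictly between $a_1$ and $a_2$, and those lying after $a_2$. Recalling that $v$ is a non-neighbor of $a_i$ in $L_\pi(T)$ exactly when the edge of $T$ between them is directed from the $\pi$-later endpoint to the $\pi$-earlier one, a short check of these three cases shows that a vertex of $W$ in the first block must be an out-neighbor in $T$ of both $a_1$ and $a_2$; a vertex of $W$ in the middle block must be an in-neighbor of $a_1$ and an out-neighbor of $a_2$; and a vertex of $W$ in the last block must be an in-neighbor of both. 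In the notation of Definition~\ref{d:consistency} this gives
$$
W\subseteq C_{(+,+)}\big((a_1,a_2)\big)\cup C_{(-,+)}\big((a_1,a_2)\big)\cup C_{(-,-)}\big((a_1,a_2)\big).
$$

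To finish, Property~2 bounds each of the three sets on the right-hand side by $1.1n/4$, hence $|W|\le 3\cdot\tfrac{1.1n}{4}=0.825n$, which contradicts $|W|>\tfrac{8n}{9}$. Therefore $A_1$ contains at most one vertex of $L_\pi(T)$-degree smaller than $n/18$, and the statement for $A_2$ follows by the same argument with the roles of the parts exchanged (Property~2 applies verbatim to ordered pairs of vertices inside $A_2$). I do not expect a real obstacle in carrying this out: the only care needed is in the position case analysis---correctly translating ``$v$ is a non-neighbor of $a_i$'' into a constraint on the orientation of the single edge $a_iv$ on each of the three blocks---and in confirming that the constants leave room, which they do since $8/9>0.825$. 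The one idea worth highlighting is that a vertex of very small $L_\pi(T)$-degree is forced to have an almost one-sided orientation pattern toward the opposite part, and two such vertices would jointly force a large set that is simultaneously ``consistent'' with a fixed sign pattern on $(a_1,a_2)$---precisely what Property~2 forbids.
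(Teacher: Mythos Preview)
Your argument is correct and is essentially identical to the paper's own proof: both assume two low-degree vertices in $A_1$, lower-bound the common non-neighbor set in $A_2$ by $8n/9$, split $A_2$ into the three blocks determined by the positions of the two vertices in $\pi$, identify each block of common non-neighbors with a set $C_D((a_1,a_2))$ for $D\in\{(+,+),(-,+),(-,-)\}$, and reach the contradiction $8n/9\le 3\cdot 1.1n/4$ via Property~2. The only cosmetic difference is that you invoke inclusion--exclusion explicitly while the paper writes the bound $n-n/9$ directly.
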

\begin{proof}
	As both claims are analogous, we prove the first one.
	Assume otherwise, that there are two distinct vertices $a,a' \in A_1$ each with fewer than $n/18$ neighbors in $L_\pi(T)$.
	Let $B^*$ be the set of vertices of $A_2$ that are non-neighbors of both $a$ and $a'$ in $L_\pi(T)$.
	Then, by our assumption, $|B^*| \ge n - n/9=8n/9$.
	
	Suppose without loss of generality that $\pi(a') < \pi(a)$.
	Each vertex of $A_2$ is positioned in $\pi$ either before $a'$, between $a'$ and $a$ or after $a$.
	Let $B_1$ be those vertices of $A_2$ positioned before $a'$,
	$B_2$ be those positioned between $a'$ and $a$,
	and $B_3$ be those positioned after $a$.
	
	What can we say about $B_1 \cap B^*$? The reason for a vertex $b \in A_2$ positioned in $\pi$ before $a$ and before $a'$ to be a non-neighbor of both of them in $L_\pi(T)$ is that $(a',b) \in E(T)$ and $(a,b) \in E(T)$ (observe: these edges point from ``right to left'' so are not in $L_\pi(T)$).
	In other words, $B_1 \cap B^* \subseteq N^+(a') \cap N^+(a)$.
	But by Property 2 with the ordered pair $(a',a)$ and $D=(+,+)$ we have that $|B_1 \cap B^*| \le 1.1n/4$.
	Similarly, $|B_2 \cap B^*| \le 1.1n/4$ using Property 2 with $D=(-,+)$, and
	$|B_3 \cap B^*| \le 1.1n/4$ using Property 2 with $D=(-,-)$.
	But this implies that $|B^*|=|B^* \cap (B_1 \cup B_2 \cup B_3)| \le 3.3n/4 < 8n/9$, a contradiction.
\end{proof}

By Lemma \ref{l:3}, we can remove from $L_\pi(T)$ one vertex of $A_1$ and one vertex of $A_2$ such that
the bipartite induced subgraph $L'_\pi(T)$ of $L_\pi(T)$ obtained after removal has minimum degree at least
$\delta(L'_\pi(T))=t \ge n/18-1 \ge n/19$. We next prove the following:
\begin{lemma}\label{l:case-1}
	$L'_\pi(T)$ has a perfect matching.
\end{lemma}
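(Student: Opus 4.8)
The goal is to show that $L'_\pi(T)$, a bipartite graph on parts of size $n-1$ each with minimum degree $t \ge n/19$, has a perfect matching. The natural tool is Hall's theorem, so the plan is to verify Hall's condition: for every $R \subseteq A_1 \setminus \{a\}$ we must show $|N(R)| \ge |R|$, where neighborhoods are taken in $L'_\pi(T)$. Symmetry of the argument (swapping the roles of the two parts) will then also be needed, so really one verifies the ``defect'' version of Hall, or checks the condition on both sides.

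First I would dispose of the easy ranges of $|R|$. If $1 \le |R| \le t$, then any single vertex of $R$ already has at least $t \ge |R|$ neighbors, so $|N(R)| \ge t \ge |R|$. If $|R| \ge n-1-t$, then I argue on the other side: every vertex $v \in A_2 \setminus \{b\}$ has degree at least $t$ in $L'_\pi(T)$, so $v$ has at most $(n-1)-t$ non-neighbors in $A_1 \setminus \{a\}$; since $|R| > (n-1) - t$, every such $v$ has a neighbor in $R$, hence $N(R) = A_2 \setminus \{b\}$ and $|N(R)| = n-1 \ge |R|$. Since $t \ge n/19$, these two ranges already cover everything except the middle band $t < |R| < n-1-t$, and here both $|R| \ge n/19 \ge n/20$ and $|A_2 \setminus \{b\} \setminus N(R)|$ could potentially be large — this is where Property 1 must be invoked.

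The heart of the argument is the middle range. Suppose for contradiction that $|N(R)| < |R|$ for some $R$ with $t < |R| < n-1-t$. Let $S = (A_2 \setminus \{b\}) \setminus N(R)$ be the set of non-neighbors of $R$ in $L'_\pi(T)$. Then $|S| = (n-1) - |N(R)| > (n-1) - |R| > t \ge n/19 \ge n/20$, and also $|R| > t \ge n/20$. By construction there is no edge of $L'_\pi(T)$ — equivalently no edge of $L_\pi(T)$, since $L'_\pi(T)$ is an induced subgraph — between $R$ and $S$. But $R \subseteq A_1$ and $S \subseteq A_2$ with $|R| \ge n/20$ and $|S| \ge n/20$, so this is precisely the event $X(T,\pi,R,S)$, which Property 1 forbids. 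This contradiction establishes Hall's condition for all $R \subseteq A_1 \setminus \{a\}$, and by the symmetric argument (the roles of $A_1$ and $A_2$, and of $N^+$/$N^-$, are interchangeable in Properties 1 and 2) also for all subsets of $A_2 \setminus \{b\}$; hence $L'_\pi(T)$ has a perfect matching.

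**Main obstacle.** The only delicate point is bookkeeping the thresholds so that the three ranges of $|R|$ genuinely overlap and cover $\{1, \dots, n-1\}$: one needs $t \ge n/20$ comfortably (which holds since $t \ge n/19$) so that the ``small $|R|$'' range $[1,t]$ and the ``large $|R|$'' range $[n-1-t, n-1]$ together leave a middle gap in which \emph{both} $R$ and its non-neighbor set $S$ have size at least $n/20$ — exactly the regime where Property 1 applies. Everything else is a routine application of Hall's theorem and the two deterministic properties already secured.
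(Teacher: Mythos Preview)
Your proof is correct and follows essentially the same route as the paper: verify Hall's condition by splitting into the three ranges $|R|\le t$, $t<|R|\le n-1-t$, and $|R|>n-1-t$, using the minimum degree for the extreme ranges and Property~1 for the middle range. One minor remark: for a balanced bipartite graph, Hall's condition on one side already suffices for a perfect matching, so your mention of checking the condition on both sides (or invoking a defect version) is unnecessary.
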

\begin{proof}
We prove the lemma using Hall's Theorem.
Let $A'_1$ and $A'_2$ be the sides of $L'_\pi(T)$ and observe that $|A'_1|=|A'_2|=n-1$.
We must therefore show that for all $R \subseteq A'_1$,
$|N_{A'_2}(R)| \ge |R|$ where $N_{A'_2}(R)$ are the set of vertices of $A'_2$ for which there is an edge of $L_\pi(T)$ connecting them to
a vertex of $R$.

Suppose first that $|R| \le t$. In this case we have $|N_{A_2'}(R)| \ge t \ge |R|$ as the minimum degree of $L'_\pi(T)$ is $t$.

If $t < |R| \le n-1-t$ then set $S=A_2' \setminus N_{A_2'}(R)$, so there is no edge in $L_\pi(T)$ between $R$ and $S$. Observe that $|R| \ge n/19$ since $|R| > t$.
Since $X(T,\pi,R,S)$ holds, we must have by Property 1 that $|S| < n/20$.
But this implies that $|N_{A_2'}(R)| > (n-1)-n/20 \ge n-1-t \ge |R|$.

Finally, if $|R| > n-1-t$ then $N_{A_2'}(R)=A_2'$ since the minimum degree is $t$.
\end{proof}
Observe that since $L'_\pi(T)$ has a perfect matching, $L_\pi(T)$ has
a matching of size $n-1$. As this holds for all $\pi \in {\cal S}_{V(T)}$ for a $T$ satisfying properties 1-2, and as $T \sim \R(n,2)$ satisfies Properties 1 and 2 with probability $1-o_n(1)$, 
Proposition \ref{p:k=2} follows. \qed

We end this section be pointing out the major obstacle when trying the extend the bipartite case to the $k$-partite case.
One natural way to try to do this is by induction. Suppose we have already found $n-t+1$ pairwise-disjoint transitive $t$-cliques on the vertices of the first $t$ parts $A_1,\ldots,A_t$. We now expose the (randomly directed) edges incident with part $A_{t+1}$ having their other endpoint in $\cup_{j=1}^t A_j$. One can define a bipartite graph with one side being
the (already found) cliques and the other part being $A_{t+1}$, and an edge connects $v \in A_{t+1}$ with a clique if the addition of $v$ to that clique extends to a transitive $(t+1)$-clique. The goal would then be to show that this bipartite graph has a matching of size $n-t$. To this end, we require an analogue of Lemma \ref{l:1} and an analogue of Lemma
\ref{l:3}. Now, while an analogue of Lemma \ref{l:1} is relatively easy to obtain, there is no similar analogue for Lemma \ref{l:3}.
In fact, one cannot just fix the $n-t+1$ transitive $t$-cliques and expect such an extension to $t+1$ cliques.
Thus, one needs to ``plan ahead'' and have sufficient flexibility to perform an inductive step. This is what motivates the use of an appropriately defined absorber.

\section{Establishing properties}

In the remaining sections we prove the cases $k \ge 3$ of Theorem \ref{t:1-equiv} (which, recall, is equivalent to Theorem \ref{t:1}). In this section we establish several properties, some are quite delicate,  that are possessed with high probability by $T \sim \R(n,k)$. In Sections 5 and 6, we prove that
$T \sim \R(n,k)$ which possesses these properties, has $n-k+1$ pairwise disjoint $k$-cliques in each of its feedback arc sets.

We require some generalization of notations introduced in the previous section.
As each element of $\R(n,k)$ is $k$-partite, we denote the parts by $A_1,\ldots,A_k$ where
$|A_i|=n$.
Let $T \sim \R(n,k)$ and let $\pi$ be a permutation of the vertices $V(T) = \cup_{i=1}^k A_i$ of $T$,
hence $\pi \in {\cal S}_{V(T)}$. Recall from the previous section that proving Theorem \ref{t:1-equiv} amounts to proving that for $T \sim \R(n,k)$, it holds a.a.s.
that for all $\pi \in {\cal S}_{V(T)}$, the graph $L_\pi(T)$ has a $n-k+1$ pairwise disjoint $k$-cliques.

\begin{definition}[Perfect $r$-set]
	Let $1 \le r \le k$. A {\em perfect $r$-set} is a set $P$ of $n$ pairwise disjoint $r$-tuples where the
	$i$'th element of each $r$-tuple is from $A_i$. The set of all $(n!)^{r-1}$ perfect $r$-sets is denoted by $\P_r$.
\end{definition}
Note: while each element of a perfect $r$-set $P$ induces an $r$-clique in $T$,
we do not require in the definition that such an element induces an $r$-clique in any given $L_\pi(T)$.

\begin{definition}[$L_\pi(P,T)$]
Let $1 \le r < k$. Given a perfect $r$-set $P$ and given $\pi \in  {\cal S}_{V(T)}$, define the (undirected) bipartite graph $L_\pi(P,T)$ as follows. One part of $L_\pi(P,T)$ is $P$ and the other part is $A_{r+1}$.
Observe that each part has $n$ vertices.
The edges of $L_\pi(P,T)$ are defined as follows.
Consider some pair $(p,v)$ where $p=(a_1,\ldots,a_r) \in P$ and $v \in A_{r+1}$.
Then $pv$ is an edge of $L_\pi(P,T)$ if and only if for all $i=1,\ldots,r$, $\{v,a_i\}$
induces an edge of $L_\pi(T)$ (the orientation of each of these $r$ edges in $L_\pi(T)$ is not important).
\end{definition}

\begin{definition}[The event $X(T,\pi,P,R,S)$]
Let $1 \le r < k$. For $\pi \in  {\cal S}_{V(T)}$, for a perfect $r$-set $P$, for $R \subseteq P$,
for $S \subseteq A_{r+1}$ and for $T \sim \R(n,k)$ let $X(T,\pi,P,R,S)$ denote the event that in $L_\pi(P,T)$ there are fewer than
$|R||S|/2^{r+1}$ edges between $R$ and $S$.
\end{definition}

\begin{lemma}\label{l:X-property}
	Let $\epsilon > 0$ be given and let $1 \le r < k$.
	For all sufficiently large $n$ as a function of $\epsilon,k$ the following holds for $T \sim \R(n,k)$:
		$$
		\Pr\left[\bigcup_{P \in {\cal P}_r} ~ \bigcup_{R \subseteq P,|R| \ge \epsilon n} ~\bigcup_{S \subseteq A_{r+1},|S| \ge \epsilon n} ~ \bigcup_{\pi \in {\cal S}_{V(T)}}
		X(T,\pi,P,R,S)\right] \le \frac{1}{kn}\;.
		$$
\end{lemma}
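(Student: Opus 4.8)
The plan is to bound the probability of the bad union event by a first-moment/union-bound argument, where the key point is that the number of choices for $(P, R, S, \pi)$ is bounded by $(n!)^{r-1} \cdot 2^n \cdot 2^n \cdot (kn)!$, which is $2^{O(n \log n)}$, while for each fixed quadruple the probability that $X(T,\pi,P,R,S)$ occurs will be doubly-exponentially small, i.e. $2^{-\Omega(n^2)}$, and the latter swamps the former. So first I would fix a perfect $r$-set $P$, a permutation $\pi$, and sets $R \subseteq P$, $S \subseteq A_{r+1}$ with $|R|, |S| \ge \epsilon n$, and analyze the random variable $Z$ counting edges of $L_\pi(P,T)$ between $R$ and $S$.

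The heart of the estimate is that for a fixed pair $(p,v)$ with $p = (a_1,\ldots,a_r) \in R$ and $v \in S$, the event ``$pv$ is an edge of $L_\pi(P,T)$'' depends only on the orientations of the $r$ edges $\{v,a_i\}$, $i=1,\ldots,r$; since $v \in A_{r+1}$ and $a_i \in A_i$ these are genuine edges of $T$, and the condition is that each of these $r$ edges is oriented to go ``left to right'' according to $\pi$ — this is a single prescribed outcome among the two equally likely orientations of each edge, so the probability is exactly $2^{-r}$. Hence $\mathbb{E}[Z] = |R||S| 2^{-r}$, and $X$ is the event $Z < |R||S| 2^{-(r+1)} = \tfrac12 \mathbb{E}[Z]$. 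The delicate part is that the indicator variables for different pairs $(p,v)$ are \emph{not} independent: two pairs $(p,v)$ and $(p',v)$ sharing the same $v$ but with $p,p'$ having a common coordinate $a_i = a'_i$ share the edge $\{v,a_i\}$, and more importantly pairs $(p,v), (p,v')$ with the same $p$ are also dependent-looking (though actually involve disjoint edge sets since $v \ne v'$). The right tool here is Janson's inequality, Lemma~\ref{l:janson}, applied with success probability $p = 2^{-r}$ and $m = |R||S| \ge \epsilon^2 n^2$ indicator variables, taking $\gamma = 1/2$; I would need to bound $\Delta$, the number of ordered pairs of indicators whose underlying edge sets intersect. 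Two pair-indicators $(p,v)$ and $(p',v')$ have intersecting $r$-edge sets only if $v = v'$ (since the edges incident to $v$ and to $v'$ are disjoint when $v \ne v'$, as each $a_i$-to-$v$ edge is distinct), and then they share an edge iff $a_i = a'_i$ for some $i$; counting these gives $\Delta = O(r \cdot n \cdot n \cdot n) = O(n^3)$, so $\Delta/m = O(n)$. Janson then yields $\Pr[X(T,\pi,P,R,S)] \le \exp(-\gamma^2 m p / (2 + \Delta/m)) = \exp(-\Omega(n^2 / n)) = \exp(-\Omega(n))$.

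The main obstacle I anticipate is that $\exp(-\Omega(n))$ from a naive Janson application is \emph{not} small enough to survive the $(kn)! = 2^{\Theta(n\log n)}$ union over permutations $\pi$ — we need $2^{-\Omega(n\log n)}$ or, better, $2^{-\Omega(n^2)}$. To fix this I would not treat $Z$ as a sum of $\Theta(n^2)$ barely-dependent indicators, but instead exploit more independence. The cleaner route: for a fixed $v \in S$, the events $\{pv \text{ is an edge}\}$ over $p \in R$ are determined by the orientations of edges from $v$ into $A_1 \cup \cdots \cup A_r$; condition on these orientations. For each $i$, roughly half of $A_i$ lies on the ``good side'' of $v$, so the number of $p \in R$ all of whose coordinates are good is, on average, $|R| 2^{-r}$, and across the $|S|$ choices of $v$ the relevant randomness (orientations at different $v$'s) is genuinely independent. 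This makes $Z = \sum_{v \in S} Z_v$ a sum of $|S| \ge \epsilon n$ independent random variables $Z_v \in [0,n]$ each with mean $|R| 2^{-r}$; but to get concentration I would want each $Z_v$ itself concentrated. Here I would instead reveal the orientations of \emph{all} edges between $R$'s vertex set and $S$ and argue: the only way $Z$ can be below $\tfrac12 |R||S| 2^{-r}$ is that for many $v$, the orientations ``conspire,'' and by a two-round exposure (first the $A_1$–$S$ edges, then $A_2$–$S$, etc.) combined with Chernoff at each round, or alternatively by a direct Azuma/martingale argument (Lemma~\ref{l:azuma}) exposing the $\le rn|S| = O(n^2)$ relevant edge-orientations one at a time — each flip changing $Z$ by at most $1$ — one gets $\Pr[Z < \tfrac12\mathbb{E} Z] \le \exp(-\Omega((\mathbb{E} Z)^2 / n^2)) = \exp(-\Omega(n^2))$, since $\mathbb{E} Z = \Theta(n^2)$. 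That $2^{-\Omega(n^2)}$ bound dominates $(n!)^{r-1} 2^{2n} (kn)! = 2^{O(n\log n)}$, so summing over all quadruples gives a total probability $\le 1/(kn)$ for $n$ large in terms of $\epsilon$ and $k$, which is exactly the claimed bound. I would therefore structure the proof as: (i) fix the quadruple, compute $\mathbb{E} Z = |R||S|2^{-r} \ge \epsilon^2 n^2 2^{-r}$; (ii) set up the edge-exposure martingale on the $O(n^2)$ relevant edges and apply Azuma to get $\Pr[X] \le \exp(-c_\epsilon n^2)$; (iii) multiply by the $2^{O(n\log n)}$ count of quadruples and conclude.
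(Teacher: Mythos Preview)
Your final Azuma argument is valid and yields the needed $2^{-\Omega(n^2)}$ bound, but the whole detour through Janson and then Azuma stems from a false alarm. You write that two indicators $(p,v)$ and $(p',v)$ ``with $p,p'$ having a common coordinate $a_i = a'_i$'' are dependent --- but this cannot happen: $R \subseteq P$ where $P$ is a \emph{perfect} $r$-set, i.e.\ its $r$-tuples are pairwise vertex-disjoint by definition. Hence for any two distinct pairs $(p,v)$ and $(p',v')$ either $v \ne v'$ or $p \cap p' = \emptyset$, and in both cases the two $r$-element edge sets $\{\{v,a_i\}:1\le i\le r\}$ and $\{\{v',a'_i\}:1\le i\le r\}$ are disjoint. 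So the $|R||S|$ indicators are genuinely mutually independent, $Z \sim \mathcal{B}(|R||S|,2^{-r})$, and a single application of Chernoff (Lemma~\ref{l:chernoff}) already gives $\Pr[X(T,\pi,P,R,S)] = 2^{-\Theta(n^2)}$; the paper proceeds exactly this way. Your Azuma route reaches the same destination but with more machinery (and your implicit verification that each edge flip changes $Z$ by at most $1$ is in fact just a restatement of the disjointness you overlooked). The union-bound count $(n!)^{r-1}\cdot 2^{2n}\cdot (kn)! = 2^{O(n\log n)}$ is identical in both approaches.
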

\begin{proof}
	Fix $\pi,P,R,S$ and let $T \sim \R(n,k)$. Consider some pair $(p,v)$ with
	$p=(a_1,\ldots,a_r) \in P$ and $v \in A_{r+1}$.
	The probability that $pv$ is an edge in $L_\pi(P,T)$ is $1/2^r$ as it depends on the orientation of
	the $r$ edges of $T$ connecting $v$ with $a_1,\ldots,a_r$.
	As for any two distinct pairs $(p,v)$ and $(p',v')$
	we have either $v \neq v'$ or else $p \cap p' = \emptyset$, the event that $pv$ is an edge of $L_\pi(P,T)$ is independent of any other combination of events of the form $p'v'$. So, the number of edges between $R$
	and $S$ in $L_\pi(P,T)$ has distribution ${\cal B}(|R||S|,2^{-r})$.
	By Lemma \ref{l:chernoff}, the probability that
	this random variable falls by a constant factor below its expectation, in particular falls
	below $|R||S|/2^{r+1}$, is exponentially small
	in $|R||S|$. Hence, by the assumption on the sizes of $R$ and $S$ being at least $\epsilon n$ in the stated expression, the probability of the event $X(T,\pi,P,R,S)$ is $2^{-\Theta(n^2)}$.
	
	There are $(kn)!$ choices for $\pi$, $(n!)^{r-1}$ choices for $P$,
	and at most $2^n$ choices for each of $R$ and $S$. Altogether, the number of choices of the $4$-tuple  $(\pi,P,R,S)$
	is only $2^{\Theta(n \log n)}$. Hence, for $n$ sufficiently large as a function of $\epsilon$ and $k$, the union event in the statement of the lemma holds with probability at most $1/(kn)$.
\end{proof}

\begin{definition}\label{d:friendly-clique}[Friendly $r$-clique]
	Let $1 \le r < k$.
	Suppose that $\{v_1,\ldots,v_r\}$ induce an $r$-clique in $L_\pi(T)$ where $v_i \in A_i$.
	We say that this $r$-clique is {\em friendly} if for every $r < t \le k$,
	and for every $1 \le r' \le r$ the number of
	vertices of $A_t$ that are common neighbors of $v_1,\ldots,v_{r'}$ in $L_\pi(T)$ is at least $n/2^{r'+1}$. Otherwise, the $r$-clique is {\em unfriendly}. 
\end{definition}

\begin{definition}[The event $Y(T,\pi,S_1,\ldots,S_r)$]
	Let $1 \le r < k$. For $\pi \in  {\cal S}_{V(T)}$, for subsets $S_i \subseteq A_i$ for $i=1,\ldots,r$
	of size at least $n/18$ each, and for $T \sim \R(n,k)$
	let $Y(T,\pi,S_1,\ldots,S_r)$ denote the event that in $L_\pi(T)$ there are fewer than
	$0.5(1/18)^rn^r2^{-\binom{r}{2}}$ friendly $r$-cliques induced by $\cup_{i=1}^r S_i$.
\end{definition}
\begin{lemma}\label{l:Y-property}
	Let $1 \le r \le k-1$.
	For all sufficiently large $n$ as a function of $k$ the following holds for $T \sim \R(n,k)$:
	$$
	\Pr\left[\bigcup_{\pi \in {\cal S}_{V(T)}} ~ \bigcup_{i=1}^{r} ~ \bigcup_{S_i \subseteq |A_i|, |S_i| \ge n/18}
	Y(T,\pi,S_1,\ldots,S_r)\right] \le \frac{1}{kn}\;.
	$$
\end{lemma}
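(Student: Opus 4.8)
The plan is to fix one permutation $\pi$ and one choice of subsets $S_1\subseteq A_1,\dots,S_r\subseteq A_r$ with $|S_i|\ge n/18$, show that the number of friendly $r$-cliques induced by $\cup_{i=1}^rS_i$ in $L_\pi(T)$ is at least $0.5(1/18)^rn^r2^{-\binom r2}$ except with probability $e^{-\Omega(n^2)}$ (all implied constants depending only on $k$), and then take a union bound: there are $(kn)!$ choices of $\pi$ and at most $2^{kn}$ choices of the tuple $(S_1,\dots,S_r)$, hence only $2^{O(n\log n)}$ configurations in all, and $2^{O(n\log n)}e^{-\Omega(n^2)}=o(1/(kn))$. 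So fix $\pi$ and $S_1,\dots,S_r$, and recall that for fixed $\pi$ the undirected graph $L_\pi(T)$ is distributed exactly as the random subgraph of the complete $k$-partite graph with all parts of size $n$ in which each edge is retained independently with probability $1/2$. For $\vec v=(v_1,\dots,v_r)\in S_1\times\cdots\times S_r$ write $E_{\vec v}$ for the event that $\{v_1,\dots,v_r\}$ induces an $r$-clique in $L_\pi(T)$ and $U_{\vec v}$ for the event that $\vec v$ is unfriendly, i.e.\ (Definition~\ref{d:friendly-clique}) that for some $t\in\{r+1,\dots,k\}$ and some $r'\le r$ the vertices $v_1,\dots,v_{r'}$ have fewer than $n/2^{r'+1}$ common neighbours in $A_t$ in $L_\pi(T)$. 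Since the number of friendly $r$-cliques is $\sum_{\vec v}\mathbf 1[E_{\vec v}]\mathbf 1[\neg U_{\vec v}]\ge\sum_{\vec v}\mathbf 1[E_{\vec v}]-\sum_{\vec v}\mathbf 1[U_{\vec v}]$, it suffices to bound the first sum below by $\tfrac34(n/18)^r2^{-\binom r2}$ and the second above by $\tfrac14(n/18)^r2^{-\binom r2}$, each with failure probability $e^{-\Omega(n^2)}$.

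For the first sum: when $r=1$ it is simply $|S_1|\ge n/18$ with certainty. When $r\ge2$ each $E_{\vec v}$ has probability $2^{-\binom r2}$, so the expectation of the sum is $\ge(n/18)^r2^{-\binom r2}$, and the number $\Delta$ of ordered pairs $(\vec v,\vec v')$ in $(S_1\times\cdots\times S_r)^2$ agreeing in at least two coordinates is $O(n^{2r-2})$, so $\Delta/m=O(n^{r-2})$ with $m=|S_1|\cdots|S_r|$. Janson's inequality (Lemma~\ref{l:janson}) with $\gamma=\tfrac14$ then gives a lower-tail probability of at most $e^{-\gamma^2m2^{-\binom r2}/(2+\Delta/m)}=e^{-\Omega(n^2)}$, since the exponent equals $\Theta(n^r)/\Theta(n^{\max(r-2,0)})=\Theta(n^2)$ for every $r\ge2$.

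For the second sum, the key observation is that $\vec v$ is unfriendly only if one of its prefixes is ``bad'': call an $r'$-tuple $(u_1,\dots,u_{r'})$ with $u_i\in A_i$ and $r'\le r$ \emph{$t$-bad}, for $t\in\{r+1,\dots,k\}$, if it has fewer than $n/2^{r'+1}$ common neighbours in $A_t$ in $L_\pi(T)$. For a fixed $r'$-tuple this common-neighbourhood count has distribution $\B(n,2^{-r'})$, and $n/2^{r'+1}$ is half its mean, so by the Chernoff bound (Lemma~\ref{l:chernoff}) a fixed tuple is $t$-bad with probability at most $2e^{-n/2^{2r'+2}}=e^{-\Omega(n)}$; hence the expected number of $t$-bad $r'$-tuples is $n^{r'}e^{-\Omega(n)}=e^{-\Omega(n)}$. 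To upgrade this to ``at most $cn^{r'}$ of them, except with probability $e^{-\Omega(n^2)}$'' for a small constant $c=c(k)>0$ --- despite the fact that the $t$-bad events of two prefixes that share a vertex are not independent --- observe that these events are functions of pairwise disjoint edge sets whenever the prefixes are vertex-disjoint, so the dependency graph on $A_1\times\cdots\times A_{r'}$ (join two tuples sharing a coordinate) has maximum degree at most $r'n^{r'-1}=o(n^{r'})$; therefore any $cn^{r'}$ $t$-bad tuples contain an \emph{independent} subfamily of size $\ell_0=\Omega(n)$, an independent subfamily of size $\ell_0$ is entirely $t$-bad with probability at most $(2e^{-n/2^{2k}})^{\ell_0}$ by independence, and a union bound over the at most $\binom{n^{r'}}{\ell_0}\le n^{r'\ell_0}$ such subfamilies is still $e^{-\Omega(n^2)}$. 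Summing over the $O(k^2)$ pairs $(t,r')$ and using that each $t$-bad length-$r'$ prefix extends to at most $n^{r-r'}$ tuples in $S_1\times\cdots\times S_r$ gives $\sum_{\vec v}\mathbf 1[U_{\vec v}]=O(cn^r)$, which is $\le\tfrac14(n/18)^r2^{-\binom r2}$ once $c$ is small enough in terms of $k$. Combining the two estimates, the number of friendly $r$-cliques is at least $\tfrac34(n/18)^r2^{-\binom r2}-\tfrac14(n/18)^r2^{-\binom r2}=0.5(1/18)^rn^r2^{-\binom r2}$, which completes the fixed-configuration bound and hence the lemma.

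I expect the main obstacle to be reconciling the huge union bound over all $(kn)!$ permutations --- which demands a per-configuration failure probability of $e^{-\Omega(n^2)}$ --- with the fact that the obvious per-vertex or per-prefix Chernoff estimate yields only $e^{-\Omega(n)}$. On the clique-counting side this is resolved by Janson's inequality, whose exponent is quadratic in $n$ because the relevant second-moment term $\Delta$ is comparatively small; on the unfriendliness side it is resolved by exploiting that having many bad prefixes is a large deviation far above an exponentially small expectation, together with the independent-subfamily extraction that disposes of the dependencies among overlapping prefixes --- a plain bounded-differences or Markov estimate is too weak for this step.
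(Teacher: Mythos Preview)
Your proof is correct and follows essentially the same approach as the paper's: Janson's inequality for the clique count, then the observation that many unfriendly tuples force $\Omega(n)$ pairwise-disjoint ones, whose unfriendliness events are independent, beating the $2^{O(n\log n)}$ union bound. The only cosmetic difference is that you decompose unfriendliness by prefix length $r'$ and target part $t$ before extracting the disjoint subfamily, whereas the paper extracts disjoint unfriendly $r$-cliques directly; both lead to the same $e^{-\Omega(n^2)}$ tail.
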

\begin{proof}
	Let $\mu=1/18$. Fix $\pi,S_1,\ldots,S_r$ where $|S_i| \ge \mu n$ and let $T \sim \R(n,k)$.
	As $Y(T,\pi, S_1,\ldots,S_r)$ implies $Y(T,\pi, S_1^{*},\ldots,S_r^{*})$
	if $S_i^{*} \subseteq S_i$, we may assume $|S_i| = \mu n$.
	Consider some $r$-tuple of vertices $(a_1,\ldots,a_r)$ with $a_i \in S_i$.
	The probability that $\{a_1,\ldots,a_r\}$ induces an $r$-clique in $L_\pi(T)$ is $2^{-\binom{r}{2}}$.
	Hence if $Z$ denotes the set of (not necessarily friendly) $r$-cliques in $L_\pi(T)$ induced by
	$\cup_{i=1}^r S_i$, then $|Z|$ is the sum of $\mu^rn^r$ indicator variables, each with
	success probability $2^{-\binom{r}{2}}$. Thus, $E[|Z|]=\mu^rn^r2^{-\binom{r}{2}}$.
	Each indicator variable corresponding to $(a_1,\ldots,a_r)$ is independent of
	a variable corresponding to $(b_1,\ldots,b_r)$ if they intersect in at most one vertex (have no pair in common).
	Hence each such variable is independent of all other variables but at most
	$r^2\mu^{r-2}n^{r-2}$. Hence, by Lemma \ref{l:janson}
	with $p=2^{-\binom{r}{2}}$, $m=\mu^rn^r$, $\Delta \le r^2\mu^{r-2}n^{2r-2}$, $\gamma = 1/4$,
	the probability that $|Z|$ is smaller than $0.75 \mu^rn^r2^{-\binom{r}{2}}$ is at most $e^{-\Theta(n^2)}$.
	
	Given that $|Z| \ge 0.75\mu^rn^r2^{-\binom{r}{2}}$, what is the probability that fewer than
	$0.5\mu^rn^r2^{-\binom{r}{2}}$ of the elements of $Z$ are friendly? If this has occurred, then there are
	are at least $0.25\mu^rn^r2^{-\binom{r}{2}}$  unfriendly $r$-cliques. In particular,
	as each vertex can only be in $\Theta(n^{r-1})$ $r$-cliques, there are $\Theta(n)$ pairwise
	disjoint unfriendly $r$-cliques. What is the probability of an $r$-clique induced by $\{a_1,\ldots,a_r\}$
	to be unfriendly? Let $r < t \le k$ and let $1 \le r' \le r$. The number of common neighbors of $a_1,\ldots,a_{r'}$ in
	$A_t$ is distributed $\B(n,2^{-r'})$ hence, by Lemma \ref{l:chernoff}, the probability that this number falls below
	$n/2^{r'+1}$ is $e^{-\Theta(n)}$. So, the probability of being unfriendly is
	at most $k^2e^{-\Theta(n)} = e^{-\Theta(n)}$. Hence, the probability of $\Theta(n)$ pairwise disjoint $r$-cliques to be all unfriendly is $e^{-\Theta(n^2)}$. As the number of sets of pairwise disjoint
	$r$-cliques is smaller than $n^n$, the probability that $Z$ has more than $0.25\mu^rn^r2^{-\binom{r}{2}}$  unfriendly $r$-cliques remains $e^{-\Theta(n^2)}$.
	So, the probability of the event $Y(T,\pi,S_1,\ldots,S_r)$ occurring is $e^{-\Theta(n^2)}$.
	
	There are $(kn)!$ choices for $\pi$ and at most $2^n$ choices for each $S_i$.
	Altogether, the number of choices of the tuple $(\pi,S_1,\ldots,S_r)$ 
	is only $2^{\Theta(n \log n)}$. Hence, for $n$ sufficiently large as a function of $k$, the union event in the statement of the lemma holds with probability at most $1/(kn)$.
\end{proof}	

We recall, and then extend, the notion of consistency from the previous section.
Let $1 \le r < k$. For an $r$-tuple $p=(a_1,\ldots,a_r)$ with $a_i \in A_i$, 
a vertex $v \in A_t$ with $r < t \le k$ and a vector $W \in \{+,-\}^{r}$ we say that $v$ is {\em $W$-consistent with $p$} if the following holds for each $i=1,\ldots,r$:
$(v,a_i) \in E(T)$ if and only if $W(i)$ is plus (otherwise $(a,v_i) \in E(T)$).
If $v$ is not $W$-consistent with $p$, it is {\em $W$-inconsistent with $p$}.
Clearly, for $T \sim \R(n,k)$, given $p$, $W$, and $v$, the probability that $v$ is $W$-inconsistent with $p$ is $1-1/2^r$. We need to extend the notion of $W$-inconsistency to higher dimensions as follows.

\begin{definition}[$\hat{W}$-inconsistent with $\hat{p}$]
Let $1 \le r < k$.
For a sequence $\hat{p}=(p_1,\ldots,p_d)$ of $d$ pairwise-disjoint $r$-tuples as above
and for a sequence $\hat{W} = (W_1,\ldots,W_d)$ of $d$ vectors each from $\{+,-\}^{r}$, 
a vertex $v \in A_t$ where $r < t \le k$ is {\em $\hat{W}$-inconsistent with $\hat{p}$}
if for all $1 \le i \le d$, $v$ is $W_i$-inconsistent with $p_i$.
Let $I_{\hat{W}}(\hat{p},t)$ be the subset of vertices of $A_t$ that are $\hat{W}$-inconsistent
with $\hat{p}$.
\end{definition}
Clearly, since the $p_i$ in $\hat{p}$ are pairwise disjoint, given $\hat{p}$, $\hat{W}$, and $v \in A_t$,
we have that for $T \sim \R(n,k)$ the probability that $v$ is $\hat{W}$-inconsistent with $\hat{p}$ (or, equivalently, $v \in I_{\hat{W}}(\hat{p},t)$) is
$(1-1/2^r)^d$.

\begin{lemma}\label{l:inconsistent-property}
	Let $d$ be a positive integer, let $1 \le r < k$ and let $r < t \le k$.
	Let $\hat{W} = (W_1,\ldots,W_d)$ be as above and let $\hat{p}=(p_1,\ldots,p_d)$ be as above.
	For all sufficiently large $n$ as a function of $k,d$ the following holds for $T \sim \R(n,k)$:
	$$
	\Pr\left[|I_{\hat{W}}(\hat{p},t)| \ge n(1-1/2^r)^d+n^{2/3} \right] \le \frac{1}{n^{kd+1}}\;.
	$$
\end{lemma}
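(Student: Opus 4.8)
The plan is to recognize $|I_{\hat W}(\hat p,t)|$ as a binomial random variable and then apply the Chernoff bound of Lemma~\ref{l:chernoff}. Write $A_t=\{v_1,\ldots,v_n\}$ and, for each $j$, let $Z_j$ be the indicator of the event that $v_j$ is $\hat W$-inconsistent with $\hat p$, i.e.\ that $v_j\in I_{\hat W}(\hat p,t)$. As observed immediately before the statement of the lemma, since the $r$-tuples $p_1,\ldots,p_d$ are pairwise disjoint we have $\Pr[Z_j=1]=(1-1/2^r)^d$ for every $j$, so $E[\,|I_{\hat W}(\hat p,t)|\,]=\sum_{j=1}^n E[Z_j]=n(1-1/2^r)^d$.

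The point that needs a word of justification is that $Z_1,\ldots,Z_n$ are mutually independent. The event $\{Z_j=1\}$ is determined solely by the orientations of the $rd$ edges of $T$ joining $v_j$ to the $rd$ vertices lying in $p_1\cup\cdots\cup p_d$; these $rd$ vertices are distinct precisely because the $p_i$ are pairwise disjoint. For $j\neq j'$ the corresponding two sets of $rd$ edges are disjoint, and in $T\sim\R(n,k)$ the orientation of every edge is chosen independently of all others; the same argument applies to any subcollection of the $v_j$. Hence $|I_{\hat W}(\hat p,t)|=\sum_{j=1}^n Z_j$ has distribution $\B\!\left(n,(1-1/2^r)^d\right)$.

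Now set $p^*=(1-1/2^r)^d$ and $X=|I_{\hat W}(\hat p,t)|$. Since $\{X\ge np^*+n^{2/3}\}\subseteq\{|X-np^*|>\tfrac12 n^{2/3}\}$, Lemma~\ref{l:chernoff} applied with $m=n$ and $a=\tfrac12 n^{2/3}$ gives
$$
\Pr\!\left[X\ge np^*+n^{2/3}\right]\;\le\;\Pr\!\left[\,|X-np^*|>\tfrac12 n^{2/3}\,\right]\;<\;2e^{-n^{1/3}/4}.
$$
The quantity $2e^{-n^{1/3}/4}$ decays faster than any fixed negative power of $n$, so once $n$ is large enough that $\tfrac14 n^{1/3}\ge (kd+1)\ln n+\ln 2$ — which happens for all $n$ sufficiently large as a function of $k$ and $d$ — the right-hand side is at most $n^{-(kd+1)}$, which is exactly the claimed bound.

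As for difficulty: there is essentially no real obstacle in this lemma. The only substantive observations are (i) the independence of the $Z_j$, which is where the pairwise disjointness of the tuples in $\hat p$ is genuinely used, and (ii) the routine fact that an exponential tail beats the prescribed polynomial $n^{-(kd+1)}$ once $n$ is large in terms of $k$ and $d$; the deviation estimate itself is a direct invocation of Lemma~\ref{l:chernoff}.
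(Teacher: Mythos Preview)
Your proof is correct and follows essentially the same approach as the paper: identify $|I_{\hat W}(\hat p,t)|$ as a $\B(n,(1-1/2^r)^d)$ random variable via the independence of the indicators $Z_j$ (which depend on disjoint edge sets), then apply Lemma~\ref{l:chernoff} to bound the upper tail by something exponentially small in $n^{1/3}$, hence eventually below $n^{-(kd+1)}$. The only difference is that you are slightly more explicit in carrying out the Chernoff computation.
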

\begin{proof}
Notice that $|I_{\hat{W}}(\hat{p},t)|$ is a random variable which is the sum of $n$ independent indicator
random variables with success probability $(1-1/2^r)^d$. The independence follows from the fact that for
each $v \in A_t$, the probability that $v \in I_{\hat{W}}(\hat{p},t)$ is independent of all other events corresponding to other vertices of $A_t$ as it depends only the orientations of edges of $T$ incident with $v$. Hence $|I_{\hat{W}}(\hat{p},t)|$ is distributed $\B(n,(1-1/2^r)^d)$.
Thus, by Lemma \ref{l:chernoff}, the probability that it deviates from its expected value, which is linear in $n$, by more than an additive term of $n^{2/3}$ is exponentially small in a polynomial in $n$
(recall: $d,k$ are fixed and $r,t \le k$). In particular, $|I_{\hat{W}}(\hat{p})|$ satisfies the
claimed bound with probability at most $n^{-kd-1}$.
\end{proof}

We may merge Lemmas \ref{l:X-property}, \ref{l:Y-property}, and \ref{l:inconsistent-property} together with the notion of $D$-consistency from Definition \ref{d:consistency} into the following lemma.
\begin{lemma}\label{l:all-properties}
		Let $d$ be a positive integer and let $\epsilon$ be a positive real.
		Then with probability $1-o_n(1)$ the following properties hold for $T \sim \R(n,k)$:
		\begin{itemize}
		\item[]{\bf Property 1:} For all $1 \le r < k$,
		$X(T,\pi,P,R,S)$ does not occur for all $\pi \in {\cal S}_{V(T)}$, for all $P \in {\cal P}_r$, for all $R \subseteq P$ with $|R| \ge \epsilon n$ and for all
		$S \subseteq A_{r+1}$ with $|S| \ge \epsilon n$.
		\item[]{\bf Property 2:} For all $1 \le r < k$,
		$Y(T,\pi,S_1,\ldots,S_r)$ does not occur for all $\pi \in {\cal S}_{V(T)}$,
		and for all $r$-tuples $(S_1,\ldots,S_r)$ with $S_i \subseteq A_i$ and $|S_i| \ge n/18$.
		\item[]{\bf Property 3:} For all $1 \le r < k$, for all $r < t \le k$,
		for all $\hat{W} = (W_1,\ldots,W_d)$ where $W_i \in \{+,-\}^{r}$ and for all $\hat{p}=(p_1,\ldots,p_d)$ where the $p_j$ are pairwise disjoint $r$-tuples
		$p_j=(a_{j,1},\ldots,a_{j,r})$ with $a_{j,i} \in A_i$, it holds that
		$|I_{\hat{W}}(\hat{p},t)| \le n(1-1/2^r)^d+n^{2/3}$.
		\item[]{\bf Property 4:} For all $1 \le s \le k$, for all $1 \le \ell \le k$, 
		for all sequences $A'=(v_1,\ldots,v_{q})$ of $q \le d$ distinct elements of $A_\ell$, and for all $D \in \{+,-\}^{q}$, it holds that $|C_{D}(A') \cap A_s | \le n(1/2)^{q}+n^{2/3}$.
		\end{itemize}
\end{lemma}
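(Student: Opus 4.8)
The plan is to derive Lemma~\ref{l:all-properties} by a single union bound that glues together the three preceding lemmas with one further Chernoff estimate covering Property~4. Concretely, I would show that each of the four properties fails with probability $o_n(1)$; since there are only four of them, their conjunction then holds with probability $1-o_n(1)$. From now on take $n$ large enough, as a function of $\epsilon$, $k$ and $d$, for the ``$n$ sufficiently large'' hypotheses of Lemmas~\ref{l:X-property}, \ref{l:Y-property} and \ref{l:inconsistent-property} to apply.

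For Property~1, apply Lemma~\ref{l:X-property} once for each of the $k-1$ values $r=1,\ldots,k-1$; each application bounds the corresponding union event by $1/(kn)$, so Property~1 fails with probability at most $(k-1)/(kn)<1/n$. Property~2 is identical, using $k-1$ applications of Lemma~\ref{l:Y-property}, and again fails with probability $<1/n$. For Property~3 the family of bad events is larger but still of polynomial size: the number of admissible tuples $(r,t,\hat W,\hat p)$ with $1\le r<k$, $r<t\le k$, $\hat W\in(\{+,-\}^r)^d$ and $\hat p$ a sequence of $d$ pairwise disjoint $r$-tuples transversal to $A_1,\ldots,A_r$ is at most $k\cdot k\cdot 2^{kd}\cdot n^{kd}$, since $\hat p$ is determined by a choice of $rd\le kd$ vertices. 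Lemma~\ref{l:inconsistent-property} bounds the failure probability for each fixed such tuple by $n^{-kd-1}$, so the union bound over all of them gives at most $k^2 2^{kd}/n=o_n(1)$; this is exactly the role of the exponent $kd+1$ in that lemma.

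Property~4 is the only item not literally provided by an earlier lemma, but its proof mirrors that of Lemma~\ref{l:inconsistent-property} (and of Item~2 of Lemma~\ref{l:1}). Fix $s,\ell$, a sequence $A'=(v_1,\ldots,v_q)$ of $q\le d$ distinct vertices of $A_\ell$, and $D\in\{+,-\}^q$. If $s=\ell$ and $q\ge1$ then $C_D(A')\cap A_s=\emptyset$ because no edges lie inside a part, so the inequality is trivial; in every other case each $w\in A_s$ lies in $C_D(A')$ with probability $2^{-q}$ (the event depends only on the orientations of the $q$ edges joining $w$ to $v_1,\ldots,v_q$) and independently across $w\in A_s$, so $|C_D(A')\cap A_s|\sim\B(n,2^{-q})$. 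Lemma~\ref{l:chernoff} with $a=n^{2/3}$ then gives $\Pr[\,|C_D(A')\cap A_s|>n2^{-q}+n^{2/3}\,]<2e^{-n^{1/3}}$, which is below any fixed negative power of $n$ once $n$ is large. As there are at most $k\cdot k\cdot(d+1)\cdot 2^{d}\cdot n^{d}$ choices of $(s,\ell,A',D)$, Property~4 fails with probability at most $2k^2(d+1)2^d n^d e^{-n^{1/3}}=o_n(1)$. Summing the four $o_n(1)$ contributions (two of which are $<1/n$) completes the proof.

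I do not anticipate a genuine difficulty here: the lemma is essentially bookkeeping over the earlier estimates. The only points that require a moment's attention are that the number of relevant tuples for Property~3 (resp.\ Property~4) is polynomial in $n$ of degree at most $kd$ (resp.\ $d$), so that the per-tuple tail bounds $n^{-kd-1}$ (resp.\ $2e^{-n^{1/3}}$) are strong enough to survive the union bound; and that the degenerate within-a-part case of Property~4 must be dispatched separately, since it is not covered by the binomial computation.
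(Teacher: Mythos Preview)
Your proposal is correct and matches the paper's own proof essentially line for line: union bound over $r$ using Lemmas~\ref{l:X-property} and~\ref{l:Y-property} for Properties~1 and~2, a polynomial-in-$n$ union bound against the $n^{-kd-1}$ tail of Lemma~\ref{l:inconsistent-property} for Property~3, and a direct Chernoff estimate on $\B(n,2^{-q})$ combined with a polynomial union bound for Property~4 (with the degenerate case $s=\ell$ dispatched separately, just as the paper notes ``the case $s=\ell$ is trivial'').
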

\begin{proof}
	By Lemma \ref{l:X-property}, Property 1 does not hold with probability at most $1/(kn)$ for each $r$,
	hence it does not hold with probability at most $1/n$ for all $r$.
	By Lemma \ref{l:Y-property}, Property 2 does not hold with probability at most $1/(kn)$ for each $r$,
	hence it does not hold with probability at most $1/n$ for all $r$.
	As for Property 3, the number of possible $\hat{W}$ is $2^{rd}$. The number of possible
	$\hat{p}$ is smaller than $n^{rd}$. As the probability that
	$|I_{\hat{W}}(\hat{p},t)| \ge n(1-1/2^r)^d+n^{2/3}$ is at most $n^{-kd-1}$ by Lemma \ref{l:inconsistent-property}, Property 3 does not hold with probability $O(1/n)$.
	For Property 4, observe that for a given
	sequence $(v_1,\ldots,v_{q})$ of distinct elements of $A_\ell$ and for a given $D \in \{+,-\}^{q}$, the random variable $|C_{D}(A') \cap A_s|$ is distributed $\B(n,(1/2)^q)$ (the case $s=\ell$ is trivial).
	By Lemma \ref{l:chernoff}, it does not satisfy the claimed bound with probability exponentially small in $n^{1/3}$, in particular with probability smaller than $n^{-d-1}$.
	As there are $k$ choices for $s$, $k$ choices for $\ell$, fewer than $n^d$ choices for a sequence $(v_1,\ldots,v_{q})$ from $A_\ell$, and $2^{q}$ choices for $D$,
	Property 4 does not hold with probability $O(1/n)$.
	Hence, all four properties simultaneously hold with probability $1-O(1/n)$.	
\end{proof}

By Lemma \ref{l:all-properties}, in order to complete the proof of Theorem \ref{t:1-equiv},
it remains to prove the following (completely deterministic) lemma.
\begin{lemma}\label{l:main}
There exists a positive integer $d$ and a real $\epsilon > 0$
such that the following holds for all $n$ sufficiently large.
For every tournament $T \sim \R(n,k)$ satisfying properties 1-4 of Lemma \ref{l:all-properties}
and for every $\pi \in {\cal S}_{V(T)}$,
$L_\pi(T)$ has a set of at least $n-k+1$ pairwise disjoint $k$-cliques.
\end{lemma}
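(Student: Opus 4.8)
Fix a large constant $d=d(k)$ and a small constant $\epsilon=\epsilon(k)>0$, both to be pinned down along the way, and fix a $T\sim\R(n,k)$ satisfying Properties~1--4 together with a permutation $\pi\in{\cal S}_{V(T)}$; everything below is deterministic. Call $v\in A_s$ \emph{deficient} if it has fewer than $\epsilon n$ neighbours in $A_t$ inside $L_\pi(T)$ for some $t\neq s$. Just as in Lemma~\ref{l:3}, if $a,a'\in A_s$ were both deficient towards the same $A_t$, then their common non-neighbourhood in $A_t$ would have more than $(1-2\epsilon)n$ vertices, yet splitting $A_t$ according to the $\pi$-positions of $a$ and $a'$ displays this set as the union of three sets of the form $C_D((a,a'))\cap A_t$ with $D\in\{+,-\}^2$, each of size at most $n/4+n^{2/3}$ by Property~4 --- impossible for $\epsilon$ small and $n$ large. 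Hence every part contains at most one deficient vertex towards each other part, so at most $k-1$ deficient vertices per part; since the $k$-cliques we want are transversals of the parts, it is enough to delete all deficient vertices from $T$ and then produce at least $n-(k-1)=n-k+1$ pairwise disjoint $k$-cliques in the reduced graph. We keep writing $A_i$ for the surviving parts.

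\noindent\textbf{The level-by-level construction.} We build the cliques in rounds $r=1,2,\dots,k-1$, maintaining after round $r$ a family ${\cal C}_r$ of pairwise disjoint $r$-cliques of $L_\pi(T)$, each a transversal of $A_1,\dots,A_r$, with $|{\cal C}_r|\ge n-(k-1)$, and with a designated sub-family of $d$ of them, the \emph{absorber} ${\cal A}_r$, consisting of \emph{friendly} $r$-cliques (Definition~\ref{d:friendly-clique}); we start from the singletons ${\cal C}_1=A_1$ and output ${\cal C}_k$. In round $r\to r+1$ we must append to (almost) every clique of ${\cal C}_r$ a distinct vertex of $A_{r+1}$ adjacent in $L_\pi(T)$ to all $r$ of its vertices. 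Complete ${\cal C}_r$ to a perfect $r$-set $P$ with dummy tuples, and form the bipartite graph $B$ on $P\cup A_{r+1}$ joining $p$ to $v$ exactly when $p\cup\{v\}$ is an $(r+1)$-clique of $L_\pi(T)$; the goal is a matching of $B$ covering the smaller of ${\cal C}_r$ and $A_{r+1}$, which (the dummy tuples being discarded) produces ${\cal C}_{r+1}$ with $|{\cal C}_{r+1}|\ge n-(k-1)$. For a witness set $R$ with $|R|\ge\epsilon n$, Hall's condition is verified precisely as in Lemma~\ref{l:case-1}: otherwise $S:=A_{r+1}\setminus N_B(R)$ would have $|S|\ge\epsilon n$ and send no edge to $R$ in $L_\pi(P,T)$ --- an edge of $L_\pi(P,T)$ between $p$ and $v$ being exactly the statement that $p\cup\{v\}$ is an $(r+1)$-clique --- contradicting Property~1. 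Witness sets with $|R|<\epsilon n$ are the delicate case.

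\noindent\textbf{Why friendliness and the absorber.} The difficulty flagged at the end of Section~3 is genuine: the cliques of ${\cal C}_r$ were frozen in earlier rounds, and a handful of them may collectively have no common neighbour in $A_{r+1}$, so that Hall fails on a \emph{small} witness set $R$ that Property~1 cannot see; moreover, typical vertices of the parts need not be ``friendly $1$-cliques'', so one cannot simply insist that every clique of ${\cal C}_r$ be friendly. The remedy is the absorber ${\cal A}_r\subseteq{\cal C}_r$: it is maintained (using Property~2, which makes friendly $r$-cliques abundant inside any product of large sets, so that it can be built at the outset and refreshed after each use) to be \emph{universal}, meaning that every surviving vertex of $A_{r+1}$ (indeed of every later part) is adjacent in $L_\pi(T)$ to all vertices of at least one member of ${\cal A}_r$. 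Universality is exactly Property~3: letting $\hat W$ be the orientation pattern determined by such a vertex and the $d$ cliques of ${\cal A}_r$ --- this $\hat W$ depends on $\pi$, but Property~3 is quantified over all $\hat W$ --- the number of vertices of $A_{r+1}$ inconsistent with all $d$ of them is at most $n(1-1/2^r)^d+n^{2/3}$, which falls below one once $d$ is large enough, the residual $O(n^{2/3})$ vertices being themselves swept into the reserved structure by a bounded number of nested repetitions of the same idea. Now, whenever Hall flags a small obstructing set $R$ in round $r\to r+1$, we evict one clique of $R$, replace it by a universal member of ${\cal A}_r$ (and replenish ${\cal A}_r$ from the untouched pool via Property~2): because a universal absorber clique extends to an $(r+1)$-clique with \emph{every} surviving vertex of $A_{r+1}$, each such swap strictly shrinks the obstruction, and after finitely many swaps the matching exists. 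Friendliness guarantees that the absorber cliques keep large common neighbourhoods into all later parts --- so $|{\cal A}_{r+1}|=d$ and ${\cal A}_{r+1}$ can again be made universal --- and the count $|{\cal C}_{r+1}|\ge n-(k-1)$ is preserved. After round $k-1$ we obtain ${\cal C}_k$ with at least $n-k+1$ pairwise disjoint $k$-cliques of $L_\pi(T)$; as $\pi$ was arbitrary, the lemma follows.

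\noindent\textbf{The main obstacle.} The deficiency bound and the generic Hall step are transcriptions of Lemmas~\ref{l:3} and~\ref{l:case-1}. The heart of the proof --- and what Sections~5 and~6 are devoted to --- is the absorber: one must construct, uniformly over \emph{all} permutations $\pi$, a bounded family of friendly $r$-cliques that is universal in the above sense; show that the additive $O(n^{2/3})$ slack in Properties~3 and~4 can be charged to the absorber rather than to the set of uncovered vertices; and carry out the iteration so that the swaps never cascade and the total number of uncovered vertices stays pinned at exactly $k-1$. Balancing this ``plan-ahead'' flexibility against the exactness of the count is where essentially all the work goes.
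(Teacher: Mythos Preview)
Your overall architecture matches the paper's: remove at most $k-1$ ``deficient'' vertices from each part (Lemma~\ref{l:friendly}), then build the cliques level by level, using Property~1 and Hall's theorem to match large sets and an absorber to deal with the small sets that Property~1 cannot see. The gap is in your absorber.

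You take the absorber ${\cal A}_r$ to have \emph{constant} size $d$ and claim it is universal because, by Property~3, the number of vertices of $A_{r+1}$ inconsistent with all $d$ absorber cliques is at most $n(1-1/2^r)^d+n^{2/3}$, ``which falls below one once $d$ is large enough.'' This is false: $d$ is a fixed constant (it is chosen before $n$, and Properties~3--4 are only available for that fixed $d$), so $n(1-1/2^r)^d$ is $\Theta(n)$, not $o(1)$. Moreover, the relevant $\hat W$ varies with the $\pi$-interval of the vertex $v$, so one must sum over up to $dr+1$ patterns, giving roughly $(dr+1)\,n(1-1/2^r)^d$ non-extendable vertices --- still linear in $n$. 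Your ``nested repetitions'' cannot absorb a linear number of exceptions with a bounded supply of absorber cliques, and the swap step is also mis-quantified: universality as you define it says every $v$ extends \emph{some} member of ${\cal A}_r$, not that some absorber clique extends with \emph{every} $v$, so swapping an absorber clique into $R$ does not give it full degree.

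The paper's fix is to make the absorber linear in size: each $Q_r^*$ has $m=\lceil \delta n\rceil$ pairwise-disjoint friendly $r$-cliques, and one proves (Lemma~\ref{l:successful}, via a random selection and Azuma's inequality) that every $v\in A_{r+1}^*$ extends at least $\delta\rho n/4$ of them. This yields linear minimum degree on the $A_{r+1}$ side of $H_r$; linear minimum degree on the other side comes from carrying, through each round, an explicit common-neighbourhood bound of $n/d^{2r-2}$ for every current clique into every later part (Definition~\ref{d:extend-r-set}), maintained by discarding the few ``unfriendly'' extension edges (Lemma~\ref{l:friendly-edge-hr}, which is where Property~3 is actually used). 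With linear minimum degree on both sides, Hall goes through exactly as in your large-$R$ case; there is no swap-and-repair loop. The essential point you are missing is that the absorber must be of size $\Theta(n)$, not $O(1)$.
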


We next define a series of constants that will be used in the remainder of the proof, in particular, we define the required $d,\epsilon$ for which Lemma \ref{l:main} holds.
\begin{definition}[The constants]\label{d:constants}
	$ $\\
	(i) ~~ $\mu=1/18$.\\
	(ii) ~	$\rho = 0.25\mu^k 2^{-\binom{k}{2}}$.\\
	(iii) ~Let $d$ be the smallest integer satisfying for all $r=1,\ldots,k$
	$$
	\frac{\mu-d^{1-2r}}{dr+1} > (1-2^{-r})^d ~~~~{\rm and}~~~~\frac{d^{2-2r}-d^{1-2r}}{d+1} > \frac{1}{2^d}\;.
	$$
	(iv) ~~$\delta= \min\{\rho/k^2~,~1/(2kd^{2k})\}$.\\
	(v) ~~\;$\epsilon = \delta \rho/5$.
\end{definition}

From here until the end of Section 6, we fix $\pi$ and fix $T \sim \R(n,k)$ satisfying
Properties 1-4 of Lemma \ref{l:all-properties}.
Hence, we omit $\pi$ and $T$ in the definitions and notations that follow
(previous definitions and notations that use $\pi$ and $T$ remain the same).
In the various claims that follow we will always assume that $n$ is sufficiently large
as a function of the constants in Definition \ref{d:constants}, hence as a function of
$d,\epsilon,k$ (and, therefore, in fact, as a function of $k$). To prove Lemma \ref{l:main} we need to prove that $L_\pi(T)$ has a set of at least $n-k+1$ pairwise disjoint $k$-cliques.

\begin{definition}[Friendly vertex]
	A vertex $v \in A_r$ is a {\em friendly vertex} if for all $i=1,\ldots,k$, $i \neq r$,
	the number of vertices of $A_i$ that are neighbors of $v$ in $L_\pi(T)$ is at least $n/17$.
\end{definition}
The following is a generalization of Lemma \ref{l:3}.
\begin{lemma}\label{l:friendly}
	For every $1 \le r \le k$, there are at most $k-1$ vertices of $A_r$ that are not friendly vertices.
\end{lemma}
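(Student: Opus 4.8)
The plan is to mimic the argument of Lemma~\ref{l:3}, pushing it through for an arbitrary pair of parts $A_r$ and $A_i$ and using Property~4 (with $q=2$) as the quantitative input. Suppose for contradiction that there are $k$ distinct vertices $a_1,\dots,a_k\in A_r$ that are all not friendly. By the pigeonhole principle, since each non-friendly vertex fails the degree condition toward at least one of the $k-1$ other parts, at least two of these vertices, say $a$ and $a'$, fail toward the \emph{same} part $A_i$; that is, each of $a,a'$ has fewer than $n/17$ neighbors in $A_i$ in $L_\pi(T)$. Let $B^\ast\subseteq A_i$ be the set of vertices of $A_i$ that are non-neighbors in $L_\pi(T)$ of \emph{both} $a$ and $a'$. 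Then $|B^\ast|\ge n-2(n/17)=15n/17$.

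Next I would split $A_i$ according to the position of its vertices relative to $a,a'$ in the order $\pi$. Assume without loss of generality $\pi(a')<\pi(a)$, and let $B_1$ be the vertices of $A_i$ appearing before $a'$, $B_2$ those appearing between $a'$ and $a$, and $B_3$ those appearing after $a$. For $b\in B_1\cap B^\ast$: since $b$ precedes both $a'$ and $a$ and is a non-neighbor of each in $L_\pi(T)$, the two edges of $T$ must point ``right to left,'' i.e. $(a',b),(a,b)\in E(T)$, so $B_1\cap B^\ast\subseteq N^+(a')\cap N^+(a)=C_D((a',a))\cap A_i$ with $D=(+,+)$. Likewise $B_2\cap B^\ast\subseteq C_D((a',a))\cap A_i$ with $D=(-,+)$, and $B_3\cap B^\ast\subseteq C_D((a',a))\cap A_i$ with $D=(-,-)$. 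By Property~4 applied with $s=i$, $\ell=r$, $q=2$, each of these three sets has size at most $n/4+n^{2/3}$. Summing, $|B^\ast|\le 3n/4+3n^{2/3}$, which for $n$ large contradicts $|B^\ast|\ge 15n/17$ since $15/17>3/4$.

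The argument is essentially a bookkeeping exercise once the pigeonhole reduction is made, so I do not expect a serious obstacle here; the only mild subtlety is making sure the pigeonhole step is applied correctly (one needs $k$ non-friendly vertices rather than $2$, because a single vertex may be non-friendly toward any one of the $k-1$ other parts, and we need two that agree on the target part). The slack between $15/17$ and $3/4$ comfortably absorbs the $O(n^{2/3})$ error terms coming from Property~4, so the choice of the threshold $n/17$ in the definition of friendly vertex (slightly smaller than the $n/18$-type thresholds elsewhere) is exactly what makes the counting close. One should also note that $C_D((a',a))\cap A_i$ is the right object to invoke: Property~4 is stated for $C_D(A')\cap A_s$ with $A'$ a sequence of $q$ distinct vertices of a single part $A_\ell$, and here $A'=(a',a)$ is indeed a pair of distinct vertices of $A_r$, so the hypothesis applies verbatim.
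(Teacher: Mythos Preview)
Your proof is correct and follows essentially the same approach as the paper's: both use pigeonhole on $k$ non-friendly vertices to find two that fail toward the same part, then split that part into three intervals relative to the two vertices in $\pi$ and apply Property~4 with $q=2$ to each interval to obtain the contradiction $15n/17 \le 3n/4 + 3n^{2/3}$.
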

\begin{proof}
	Suppose $v_1,\ldots,v_k$ are $k$ non-friendly vertices from $A_r$.
	Then for each $v_i$, there is some $j \neq r$ such that the 
	number of vertices of $A_j$ that are neighbors of $v_i$ in $L_\pi(T)$ is smaller than $n/17$.
	Hence, there are two distinct vertices, say $v_1,v_2$, and some $j \neq r$
	such that in $L_\pi(T)$, each of them has fewer than $n/17$ neighbors from $A_j$.
	Let $B^* \subseteq A_j$ be the non-neighbors in $L_\pi(T)$ of both of them.
	Then, $|B^*| \ge 15n/17$.
	
	Suppose $\pi(v_1) < \pi(v_2)$. Each vertex of $A_j$ is positioned in $\pi$ either before $v_1$, between $v_1$ and $v_2$ or after $v_2$. Let $B_1$ be those vertices of $A_j$ positioned before $v_1$,
	$B_2$ be those positioned between $v_1$ and $v_2$, and $B_3$ be those positioned after $v_2$.
	What can we say about $B_1 \cap B^*$? The reason for a vertex $b \in A_j$ positioned in $\pi$ before $v_1$ and before $v_2$ to be a non-neighbor of both of them in $L_\pi(T)$ is that $(v_1,b) \in E(T)$ and $(v_2,b) \in E(T)$.
	But by Property 4 with the sequence $(v_1,v_2)$ and $D=(+,+)$ we have that $|B_1 \cap B^*| \le n/4 + n^{2/3}$.
	Similarly, $|B_2 \cap B^*| \le n/4+n^{2/3}$ using Property 4 with $D=(-,+)$, and
	$|B_3 \cap B^*| \le n/4+n^{2/3}$ using Property 4 with $D=(-,-)$.
	But this implies that $|B^*|=|B^* \cap (B_1 \cup B_2 \cup B_3)| \le 3n/4+3n^{2/3} < 15n/17$, a contradiction.
\end{proof}

For $1 \le r \le k$, fix $A^*_r$ to be a set of $n-k+1$ friendly vertices of $A_r$.
The process of constructing the $n-k+1$ pairwise disjoint $k$-cliques proceeds as follows.
We will, in fact, construct a perfect $k$-set $P$. This $k$-set will have the
property that all but $k-1$ of its elements induce $k$-cliques.
Furthermore, the $n-k+1$ vertices from each $A_r$ that belong the $k$-cliques of $P$ are
precisely $A^*_r$.
So, we can view the $n-k+1$ pairwise disjoint $k$-cliques that we construct as a perfect matching in the
$k$-uniform $k$-partite hypergraph ${\cal H}$, whose parts are $A_1^*,\ldots,A_k^*$ and whose ``edges'' are
the $k$-cliques they induce. Notice however that since $\pi$ is arbitrary, this hypergraph can be quite
far from resembling a random $k$-partite $k$-uniform hypergraph. Different vertices can have very different degrees
(the difference can be $\Theta(n^{k-1})$) in this hypergraph. We therefore cannot directly employ existing results on hypergraph matching in random uniform hypergraphs to deduce that ${\cal H}$ has a perfect matching.
We can also not use extremal results for this purpose as the degrees in ${\cal H}$ are not large enough
(the density of ${\cal H}$ is a very small constant).
The construction of $P$ is performed in two stages. The first stage is a randomized stage,
which we call the {\em absorber stage}.
It consists of $k-2$ iterative steps. We will show that with positive probability,
the absorber stage ``succeeds''.
In terms of the hypergraph ${\cal H}$, this absorber consists of a subgraph of ${\cal H}$ and
of subgraphs of the $r$-partite $r$-uniform hypergraph projected by ${\cal H}$ to $\cup_{i=1}^r A_i^*$.
It has the property that whenever we want to extend an already found set of $n-k+1\;$ $r$-cliques
to a set of $n-k+1\;$ $(r+1)$-cliques, we can use the absorber to match any remaining vertices of $A^*_{r+1}$ that
become ``dangerous'' and are difficult to match. So, it is an absorber in the sense defined
by R{\"o}dl, Ruci{\'n}ski, and Szemer{\'e}di \cite{RRS-2009} (see also \cite{krivelevich-1997,EGP-1991}
for earlier papers implicitly using this concept).
Given that the absorber construction succeeded,
the second stage is a deterministic, $k-1$ steps process which we call the
{\em gradual matching stage}.
Note that for the case $k=2$, there is only a single step in the second stage part,
and no first stage part, and this amounts to the simple proof for the case $k=2$ given in Section 3.
For larger $k$, the gradual matching stage uses (at its $r$'th step) both the absorber and a Hall-type maximum matching argument in order to extend a perfect $r$-set to a prefect $(r+1)$-set.
In Section 5 we describe the absorber stage. In Section 6 we describe the gradual matching stage.

\section{Absorber stage}

The purpose of this section is to construct sets $Q^*_r$ for $2 \le r < k$ such that
$Q^*_r$ is a set of pairwise-disjoint $r$-cliques in $\cup_{i=1}^r A_i^*$
(and no vertex appears in a $Q^*_j$ and a $Q^*_\ell$ if $\ell \neq j$).
$Q^*_r$ will have some nice properties that guarantee that vertices of $A^*_{r+1}$ that become
``problematic'' during the $r$'th step of the iterative construction of the $n-k+1$ disjoint cliques, can still be
matched to an element of $Q^*_r$ to form an $(r+1)$-clique. Hence, we call $\cup_{r=2}^{k-1} Q^*_r$ the
{\em absorber}.

The construction process proceeds in $k-2$ steps.
We describe Step $r$ for $r=2,\ldots,k-1$ (there is no ``Step 1'') in which we construct $Q^*_r$.
From here onwards, let $m=\lceil \delta n \rceil$.

We first describe Step 2, which is the first step.
Let $Q_2$ denote the set of all edges of $L_\pi(T)$ with one endpoint in $A^*_1$ and the other in
$A^*_2$ and which form a friendly $2$-clique (recall definition \ref{d:friendly-clique}).
Pick at random precisely $m$ {\em pairwise disjoint} elements of $Q_2$,
and denote the set of selected edges by $Q^*_2$.
At a general step $r$, we consider the set $Q_r$ of all friendly $r$-cliques of $L_\pi(T)$
induced by $\cup_{i=1}^r A^*_r$. We remove from $Q_r$ all the cliques containing vertices
that appear in $\cup_{i=2}^{r-1} Q^*_i$ (note: there are
$2m+3m+\cdots+(r-1)m = (\binom{r}{2}-1)m$ such vertices), and denote the resulting set by $Q'_r$.
We pick at random precisely $m$ pairwise disjoint
elements of $Q'_r$, and denote the set of selected $r$-cliques by $Q^*_r$. Our main lemma in this section is the following:
\begin{lemma}\label{l:successful}
	With positive probability the following holds for all $2 \le r < k$.
	For each $v \in A^*_{r+1}$ there are at least $\delta \rho n/4$ elements of $Q^*_r$ such that each of them, together with $v$, induces an $(r+1)$-clique in $L_\pi(T)$.
\end{lemma}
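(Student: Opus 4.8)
The plan is to fix a vertex $v \in A^*_{r+1}$ and first show that it has many "extension candidates" among $Q'_r$, i.e. many friendly $r$-cliques $q \in Q'_r$ such that $q \cup \{v\}$ induces an $(r+1)$-clique in $L_\pi(T)$; call such a clique $v$-good. The crucial point is that whether a clique is $v$-good depends on $v$'s orientations with the $r$ vertices of $q$, and since $q$ is friendly, each prefix $v_1,\dots,v_{r'}$ has $\ge n/2^{r'+1}$ common neighbors in each later part. I would combine this with Property 2 (the $Y$-event not occurring) applied to the common-neighborhood sets $S_i$ to guarantee that $L_\pi(T)$ restricted to these sets contains roughly $0.5(1/18)^r n^r 2^{-\binom r2} = \Theta(n^r)$ friendly $r$-cliques among the neighbors of $v$ — hence a linear-in-$n$ number of pairwise disjoint ones. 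Subtracting the $O(1)$ many cliques destroyed by removing the $(\binom r2-1)m = O(\delta n)$ absorber vertices of earlier steps, one still has at least $\Omega(\rho n)$ (say $\ge 2\delta\rho n$, using $\delta$ small) pairwise disjoint $v$-good cliques in $Q'_r$ for every $v$ simultaneously; this is the deterministic input.

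Next comes the randomized part: $Q^*_r$ is a uniformly random set of $m = \lceil \delta n\rceil$ pairwise disjoint cliques chosen from $Q'_r$. For a fixed $v$, I want to show that with very high probability $Q^*_r$ contains $\ge \delta\rho n/4$ $v$-good cliques, and then union-bound over the $\le n$ vertices $v$ and over the $k-2$ values of $r$. The natural way to model "choose $m$ pairwise disjoint cliques at random" is to build them one at a time greedily: since $Q'_r$ has $\Theta(n^r)$ cliques and each chosen clique kills only $O(n^{r-1})$ others, at every one of the $m$ steps a constant fraction $\ge c$ of the surviving cliques are $v$-good (the fraction of $v$-good cliques stays $\ge \Omega(\rho)$ throughout because we only remove $m = O(\delta n)$ cliques total and each removal destroys $O(n^{r-1})$ of the remaining $v$-good ones, which is a lower-order loss). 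So the number of $v$-good cliques among the $m$ chosen stochastically dominates a sum of $m$ indicators each of probability $\ge c$, and its expectation is $\ge cm \ge c\delta n \gg \delta\rho n/4$; an Azuma / bounded-differences argument (Lemma \ref{l:azuma}, exposing the chosen cliques one at a time, each step changing the count by at most $1$) gives deviation below $\delta\rho n/4$ with probability $e^{-\Omega(n)}$, which beats the union bound over $\le kn$ pairs $(v,r)$.

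The main obstacle I anticipate is making the "random $m$-subset of pairwise-disjoint cliques" argument rigorous while keeping the $v$-good fraction bounded below throughout the selection. The subtlety is that a clever random process could, in principle, be biased against $v$-good cliques once conditioning creeps in; one must verify that at each step the conditional probability of picking a $v$-good clique is $\ge c$ uniformly, regardless of the history. I would handle this by lower-bounding, after any $j < m$ cliques have been picked, the count of still-available $v$-good cliques (at least $2\delta\rho n \cdot \Theta(n^{r-1}) - j \cdot O(n^{r-1}) = \Omega(\rho n^r)$) against the total count of still-available cliques ($\le |Q'_r| = O(n^r)$), giving a uniform ratio $c = \Omega(\rho)$; this is exactly where the choice of $\delta$ in Definition \ref{d:constants} as a sufficiently small multiple of $\rho/k^2$ is used, so that the $O(\delta n)$ removals never dominate the $\Omega(\rho n)$ supply. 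A secondary bookkeeping point is that the same $Q^*_r$ must work simultaneously for all $v \in A^*_{r+1}$ and the cliques of different $Q^*_i$ must be vertex-disjoint; the former is the union bound above, and the latter is automatic since at step $r$ we first delete from $Q_r$ all cliques meeting $\cup_{i=2}^{r-1} Q^*_i$ before sampling.
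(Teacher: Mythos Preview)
Your approach is essentially the same as the paper's: fix $v$, use Property~2 on the neighborhood sets $S_i \subseteq A_i^*$ to find $\Omega(n^r)$ friendly $r$-cliques each of which extends with $v$ to an $(r+1)$-clique, subtract the cliques meeting earlier absorber vertices, then analyze the sequential random selection of $m$ disjoint cliques from $Q'_r$ via a Doob martingale and Azuma (Lemma~\ref{l:azuma}), and union-bound over the at most $kn$ pairs $(v,r)$.

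One passage is muddled, though. You detour through ``$\ge 2\delta\rho n$ pairwise disjoint $v$-good cliques'' and then write the count of still-available $v$-good cliques as ``$2\delta\rho n \cdot \Theta(n^{r-1}) - j\cdot O(n^{r-1}) = \Omega(\rho n^r)$''. The passage through \emph{disjoint} cliques is unnecessary, and the multiplication by $\Theta(n^{r-1})$ has no justification; moreover, $\Theta(\delta\rho n^r) - O(\delta n^r)$ is not obviously $\Omega(\rho n^r)$ since $\rho < 1$. The paper (and the clean version of your own later paragraph) simply tracks the \emph{total} count: one gets $|M^*| \ge \rho n^r$ $v$-good cliques in $Q'_r$, and after any $j < m$ picks at most $j r n^{r-1} \le m r n^{r-1} \le \delta r n^r \le (\rho/2) n^r$ of them are blocked (using $\delta \le \rho/k^2$ from Definition~\ref{d:constants}), so the conditional probability of picking a $v$-good clique at each step is at least $(\rho n^r - \delta r n^r)/n^r \ge \rho/2$. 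With this bookkeeping fixed, your concentration argument matches the paper's.
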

\begin{proof}
	There are at most $(k-2)(n-k+1) < nk$ vertices in $\cup_{i=3}^k A^*_i$ so
	it suffices to prove that for a given vertex $v \in A^*_{r+1}$,
	the probability that it does not have $\delta \rho n/4$ elements of $Q^*_r$ as stated in the lemma is smaller than
	$1/(kn)$, and then the result follows by the union bound.
	
	Let, therefore, $v \in A^*_{r+1}$ and recall that $v$ is a friendly vertex.
	For $i=1,\ldots,r$, let $S_i \subseteq A^*_i$ 
	be the set of neighbors of $v$ in $L_\pi(T)$.
	Since $v$ is friendly, it has at least $n/17$ vertices in $A_i$, so
	$|S_i| \ge n/17 -(k-1) \ge n/18=\mu n$.
	Let  $M$ denote the set of friendly $r$-cliques in $L_\pi(T)$ induced by $\cup_{i=1}^r S_i$.
	By Property 2, $Y(T,\pi,S_1,\ldots,S_r)$ does not occur, hence
	$|M| \ge 0.5\mu^r n^r 2^{-\binom{r}{2}}$.
	As trivially, each vertex of $\cup_{i=1}^r S_i$ appears in at most $n^{r-1}$ cliques of $M$,
	and since the number of vertices appearing in $\cup_{i=2}^{r-1} Q^*_i$ is only
	$(\binom{r}{2}-1)m$, there is a subset $M^* \subseteq M$ of size at least
	\begin{align}
	|M^*| & \ge |M| - (\binom{r}{2}-1)m n^{r-1} \nonumber\\
	& \ge n^r(0.5\mu^r 2^{-\binom{r}{2}} - 2(\binom{r}{2}-1)\delta) \nonumber\\
	& \ge n^r 0.25\mu^r 2^{-\binom{r}{2}} \nonumber\\
	& \ge n^r 0.25\mu^k 2^{-\binom{k}{2}} \nonumber\\
	& =\rho n^r \label{e:M*}
	\end{align}
	where each clique of $M^*$ does not contain vertices of $\cup_{i=2}^{r-1} Q^*_i$, so
	$M^* \subseteq Q'_r$. In the last displayed equation we have used that
	$m=\lceil \delta n \rceil \le 2\delta n$ and the definitions of $\rho$ and $\delta$ in \ref{d:constants}.
	
	Consider the random selection process of $Q^*_r$ from $Q'_r$.
	It consists of $m$ stages
	where at each stage we pick at random an element of $Q'_r$ out of all elements that do not intersect
	elements selected at previous stages. We want to prove that with high probability, a constant fraction of 
	the selected elements are from $M^*$. This, of course, is plausible since $M^*$ amounts to
	a constant fraction of the elements of $Q'_r$. To formalize this, it is convenient to use a  martingale. Let $X_i$ be the indicator variable which equals $1$ if at the $i$'th stage, an element
	of $M^*$ has been picked. Then $Y_m = X_1+\cdots+X_m$ is the number of elements of $M^*$ that have been picked. Let $Y_0=E[Y_m]$ and for $i=1,\ldots,m$, let $Y_i = E[Y_m | X_1,\ldots,X_{i}]$.
	Then $Y_0,\ldots,Y_m$ is a Doob martingale by definition, and observe also that since $X_i$ is
	an indicator variable, $|Y_{i+1}-Y_i| \le 1$ for $i=0,\ldots,m-1$. Hence, by Lemma \ref{l:azuma}
	for $\lambda > 0$,
	$$
	\Pr[Y_m < Y_0-\lambda \sqrt{m}] < e^{-\lambda^2/2}\;.
	$$ 
	We next estimate $Y_0 = E[Y_m]$. Clearly, $E[X_1]=|M^*|/|Q'_r| \ge \rho n^r/n^r=\rho$ where we have used (\ref{e:M*}).
	To lower bound $E[X_i]$, we lower bound it conditioned on $X_1+\cdots + X_{i-1}$.
	As $X_1+\cdots + X_{i-1} \le i-1$, there are at most $(i-1)r n^{r-1}$ elements of $M^*$ which
	contain vertices of previously selected elements, and therefore
	$E[X_i | X_1+\cdots X_{i-1}] \ge (\rho n^r - (i-1)rn^{r-1})/n^r$.
	Since $i-1 \le m-1 \le \delta n$ we have that
	$$
	E[X_i | X_1+\cdots+ X_{i-1}] \ge \frac{\rho n^r - (i-1)rn^{r-1}}{n^r} \ge \rho-\delta r \ge \frac{\rho}{2}
	$$
	where we have used that $\delta r \le \delta k \le \rho/k \le \rho/2$.
	As the lower bound $\rho/2$ does not depend on $X_1+\cdots+ X_{i-1}$ we have that
	$E[X_i] \ge \rho/2$. Therefore,
	$$
	Y_0 = E[Y_m] = \sum_{i=1}^m E[X_i] \ge \frac{m\rho}{2} \ge \frac{\delta \rho n}{2}\;.
	$$
	Hence, for $\lambda = \sqrt{m}\rho/5$ we have that
	\begin{align*}
	\Pr[Y_m < \delta \rho n/4] & = \Pr[Y_m < \delta \rho n/2 - \delta \rho n/4]\\
	& \le \Pr[Y_m \le Y_0 - \delta \rho n/4]\\
	& \le \Pr[Y_m \le Y_0 - m \rho/5] \\
	& = \Pr[Y_m \le Y_0 - \lambda \sqrt{m}]\\
	& < e^{-\lambda^2/2}\\
	& \le e^{-\Theta(n)}\\
	& < \frac{1}{nk}\;.
	\end{align*}
\end{proof}

So, from here onwards we fix the absorber $\cup_{r=2}^{k-1}Q^*_r$.
It has the property that for all $2 \le r < k$, and for
each vertex $v \in A^*_{r+1}$ there are at least $\delta \rho n/4$ elements of $Q^*_r$ 
such that each of them, together with $v$, induces an $(r+1)$-clique in $L_\pi(T)$.
Furthermore, each element of $Q^*_r$ is a friendly $r$-clique and
no vertex appears more than once in the absorber. Finally, $|Q_r^*|=m$ for all $2 \le r \le k-1$.

\section{Gradual matching stage}

This stage proceeds in $k-1$ steps. Starting with $r=1$,
in step $r$ we construct a perfect $(r+1)$-set $P_{r+1}$ which induces
$n-k+1$ disjoint $(r+1)$-cliques and furthermore, $P_{r+1}$,
when restricted to $\cup_{i=1}^r A_i$, is $P_r$.

Before describing the steps, we need to specify certain subsets, which depend on the absorber.
Let $B_1 \subset A^*_1$ be the set of vertices that do not appear in $\cup_{i=2}^{k-1} Q^*_i$
(namely the vertices of $A_1^*$ that do not appear in any element of the absorber).
For $r = 2,\ldots,k$, let $B_r \subseteq A^*_r$ be the set of vertices that do not appear in $\cup_{i=r}^{k-1} Q^*_i$. Observe that $B_k=A^*_k$, and since $|Q^*_i|=m$, we have that 
$|B_1|=n-k+1-(k-2)m$ and for $2 \le r \le k$ we have that $|B_r|=n-k+1-(k-r)m$. In particular, $|B_1|=|B_2|$
and
\begin{equation}\label{e:br}
|B_r| \ge n-k+1-(k-2)m \ge n-k(m-1) \ge n(1-\delta k)
\end{equation}
for $1 \le r \le k$ as we recall that $m=\lceil \delta n \rceil$.
Also, as every $v \in B_r \subseteq A^*_r$
is a friendly vertex, it has at least $n/17$ neighbors (in $L_\pi(T)$) in $A_j$ for $j \neq r$,
so it also has at least $n/17 - n\delta k \ge n/18=\mu n$ neighbors in $B_j$.

We describe the first step as it is simpler since it does not depend on parameters of previous steps.
Consider the induced bipartite graph $H_1$ of $L_\pi(T)$ with one side being $B_1$ and the the other side being $B_2$. By the previous paragraph, the minimum degree of $H_1$ is at least $\mu n$.
This, together with Property 1 (applied to $r=1$) and Hall's Theorem suffices to guarantee a perfect matching in $H_1$. But this is not enough. We need to make sure that the matching edges
that we choose will behave ``nicely'' with respect to future steps.
For this, we need to first discard potentially bad edges of $H_1$.
\begin{definition}[Friendly $H_1$ edge]
	An edge $uv$ of $H_1$ is {\em friendly} if for all $3 \le t \le k$, the number of common neighbors of $u$ and $v$ in $B_t$ is at least $n/d^2$. Otherwise, $uv$ is {\em unfriendly}.
\end{definition}
\begin{lemma}\label{l:friendly-edge}
	Every vertex of $B_1 \cup B_2$ is incident with fewer than $dk$ unfriendly edges of $H_1$.
\end{lemma}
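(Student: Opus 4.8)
The plan is to fix an arbitrary vertex, say $u \in B_1$ (the case $u \in B_2$ being symmetric), and show that the number of unfriendly edges at $u$ is small. An edge $uv$ of $H_1$ is unfriendly precisely when there is some $t$ with $3 \le t \le k$ for which $u$ and $v$ have fewer than $n/d^2$ common neighbors in $B_t$. So it suffices to bound, for each fixed $t$, the number of neighbors $v$ of $u$ in $B_2$ such that the common neighborhood of $\{u,v\}$ in $B_t$ is small, and then sum over the at most $k-2 < k$ values of $t$. Since we want the final bound $dk$, it is enough to show that for each fixed $t$ there are fewer than $d$ such ``bad for $t$'' neighbors $v$.

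The heart of the argument is a double-counting/consistency computation using Property 4 of Lemma~\ref{l:all-properties}, in the same spirit as the proof of Lemma~\ref{l:friendly}. Suppose for contradiction that for some fixed $t$ there were $d$ neighbors $v_1,\ldots,v_d \in B_2$ of $u$, each of which has fewer than $n/d^2$ common neighbors with $u$ in $B_t$. Consider a vertex $w \in A_t$. Whether $w$ is a common neighbor of $u$ and a given $v_j$ in $L_\pi(T)$ is governed entirely by the orientations of the edges $\{w,u\}$ and $\{w,v_j\}$ in $T$ together with the position of $w$ in $\pi$ relative to $u$ and $v_j$. The key point is that if $w$ fails to be a common neighbor of $u$ and $v_j$ for \emph{every} $j$, then $w$ is forced into a limited ``consistency pattern'' with the sequence $(u,v_1,\ldots,v_d)$: for each $j$ the pair $\{w,v_j\}$ either is a non-edge of $L_\pi(T)$ or (in the case $\{w,v_j\}$ is an edge of $L_\pi$) $\{w,u\}$ is a non-edge of $L_\pi(T)$ — and non-edges of $L_\pi$ are exactly edges of $T$ pointing the ``wrong way,'' which is a statement about orientations in $T$. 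Splitting $A_t$ according to where $w$ sits in $\pi$ relative to the $v_j$'s and to $u$, the bad vertices (those failing to be a common neighbor with any $v_j$) land in a bounded number of sets, each of which is of the form $C_D(A') \cap A_t$ for a suitable sequence $A'$ from a single part and a suitable sign vector $D$, and hence each has size at most $n(1/2)^{|A'|} + n^{2/3}$ by Property 4. On the other hand, the number of vertices of $B_t$ that \emph{are} a common neighbor of $u$ with at least one $v_j$ is at most $\sum_{j=1}^d |N(u) \cap N(v_j) \cap B_t| < d \cdot n/d^2 = n/d$, so the number of bad vertices of $B_t$ is at least $|B_t| - n/d \ge n(1-\delta k) - n/d$ by~(\ref{e:br}). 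Comparing this lower bound with the Property-4 upper bound — the latter being of order $n \cdot 2^{-\Theta(d)}$ plus lower-order terms, hence much smaller than $n(1-\delta k)-n/d$ once $d$ is large (as guaranteed by Definition~\ref{d:constants}(iii), which forces $(1/2)^d$ to be tiny compared to constants like $\mu$) — yields a contradiction.

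I would carry out the steps in this order: (1) fix $u$ and $t$, set up the contradiction hypothesis with $d$ bad neighbors $v_1,\ldots,v_d$; (2) partition $A_t$ by $\pi$-position relative to $\{u, v_1,\ldots,v_d\}$ into finitely many blocks, and on each block translate ``$w$ is not a common $L_\pi$-neighbor of $u$ and any $v_j$'' into a $D$-consistency condition with a sequence drawn from a single part (here one must be slightly careful: the sequence should be $(v_1,\ldots,v_d)$ together with $u$ when $u$'s position matters, but $u \in A_1$ and $v_j \in A_2$ lie in different parts, so one instead argues that in each $\pi$-block either $w$ must be $T$-consistent with all the $v_j$'s in a fixed pattern — giving a $C_D((v_1,\ldots,v_d)) \cap A_t$ term bounded by roughly $n/2^d$ — or else $\{w,u\}$ points a fixed way, contributing at most a single further $C_{D'}((u)) \cap A_t$-type set of size $\le n/2 + n^{2/3}$, and these exceptional $w$'s can only help us be a common neighbor, not hurt); (3) sum the block bounds to get the overall upper bound on bad vertices; (4) derive the lower bound $n(1-\delta k) - n/d$ on bad vertices of $B_t$; (5) obtain the contradiction from Definition~\ref{d:constants}(iii) for $n$ large; (6) sum over the $< k$ values of $t$ to conclude $u$ is incident with fewer than $d(k-2) < dk$ unfriendly edges.

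The main obstacle I anticipate is step (2) — getting the bookkeeping of $\pi$-blocks and sign patterns exactly right so that every block's contribution is genuinely of the form $|C_D(A') \cap A_t|$ for a sequence $A'$ inside one part, since Property 4 only applies to sequences within a single part $A_\ell$. The mild asymmetry that $u$ and the $v_j$'s lie in different parts ($A_1$ versus $A_2$) means one cannot simply form the length-$(d+1)$ sequence $(u,v_1,\ldots,v_d)$; instead the ``$u$-side'' constraint has to be handled separately (it only ever reduces the count of bad vertices or contributes a single linear-sized correction term), while the ``$v$-side'' constraint, which is the one that shrinks geometrically in $d$, is what ultimately beats the $n(1-\delta k)-n/d$ lower bound. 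Once this combinatorial accounting is pinned down, the inequality itself is exactly what Definition~\ref{d:constants}(iii) was engineered to provide, so the final contradiction is essentially automatic.
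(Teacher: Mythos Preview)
Your overall strategy---pigeonhole on $t$ to find $d$ neighbors $v_1,\ldots,v_d$ all bad for the same $t$, then partition $B_t$ by $\pi$-position and apply Property~4 to a sign-pattern on $(v_1,\ldots,v_d)$---is exactly the paper's. But your counting in steps (2) and (4) has a real gap. You lower-bound the number of ``bad'' $w\in B_t$ (those that are not a common $L_\pi$-neighbor of $u$ with any $v_j$) by $|B_t|-n/d\ge n(1-\delta k)-n/d$, and you then claim the upper bound from Property~4 is of order $n\cdot 2^{-\Theta(d)}$. That comparison is false: every $w\in B_t$ that is simply a non-neighbor of $u$ in $L_\pi(T)$ is automatically bad, with no constraint whatsoever on the edges $\{w,v_j\}$. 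There can be as many as $(1-\mu)n+O(1)$ such $w$ (friendliness of $u$ only guarantees $\ge\mu n$ neighbors, not more), so the non-neighbors of $u$ contribute a $\Theta(n)$ term---not a ``lower-order'' or $C_{D'}((u))$-sized correction---to the upper bound. Your sentence ``these exceptional $w$'s can only help us be a common neighbor, not hurt'' is backwards: being a non-neighbor of $u$ makes $w$ bad, not good.

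The fix, which is precisely what the paper does and which also dissolves your worry about $u$ and the $v_j$ lying in different parts, is to restrict from the outset to the set of $w\in B_t$ that \emph{are} neighbors of $u$ in $L_\pi(T)$. By friendliness of $u$ this set has size at least $\mu n$, and since each $v_j$ has fewer than $n/d^2$ common neighbors with $u$ in $B_t$, at least $\mu n - d\cdot n/d^2=\mu n-n/d$ of these $w$ are non-neighbors of \emph{every} $v_j$. Now the $u$-constraint has been spent, and the remaining condition involves only $v_1,\ldots,v_d\in A_2$; pigeonholing into the $d+1$ $\pi$-intervals determined by the $v_j$'s gives a block of size at least $(\mu n-n/d)/(d+1)$ lying in a single $C_D((v_1,\ldots,v_d))\cap A_t$, which Property~4 bounds by $n/2^d+n^{2/3}$. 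The contradiction then comes from the second inequality in Definition~\ref{d:constants}(iii) (with $r=1$), namely $(\mu-1/d)/(d+1)>1/2^d$.
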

\begin{proof}
	Suppose some $v \in B_1$ is incident with $dk$ unfriendly edges (the argument if $v \in B_2$ is identical).
	Let these edges be $vu_1,\ldots,vu_{dk}$ where $u_i \in B_2$.
	As all these edges are unfriendly, there is some $3 \le t \le k$ such that at least $d$ of them
	are unfriendly with respect to $B_t$, namely they have fewer than $n/d^2$ common neighbors in $B_t$.
	Suppose these are $vu_1,\ldots,vu_d$.
	Since $v$ has at least $n/18=\mu n$ neighbors in $B_t$, at least
	$\mu n-d(n/d^2)=\mu n - n/d$ of the vertices of $B_t$ are non-neighbors of all of $u_1,\ldots,u_d$.
	Without loss of generality, assume that $\pi(u_1) < \pi(u_2) < \cdots < \pi(u_d)$.
	So there are at least $(\mu n-n/d)/(d+1)$ vertices of $B_t$, all appearing in $\pi$
	after $u_i$ and before $u_{i+1}$ (or else all before $u_1$ or else all after $u_d$)
	and none of them are neighbors of $u_1,\ldots,u_d$. But according to Property $4$
	applied with $s=t$, $\ell=2$ and $W(j) = (-)$ if $j \le i$ and $W(j)=(+)$ if $d \ge j \ge i+1$, the number
	of such vertices is at most $n2^{-d}+n^{2/3}$.
	But by the definition of $d$ in \ref{d:constants},
	$$
	\frac{\mu n-n/d}{d+1} > \frac{n}{2^d}+n^{2/3}\;,
	$$
	a contradiction.
\end{proof}
Let $H_1^*$ be the spanning subgraph of $H_1$ obtained after removing all unfriendly edges of $H_1$.
Then, by Lemma \ref{l:friendly-edge}, the minimum degree of $H_1^*$ is at least
$\mu n-dk \ge n/19$.
We can now easily prove using Hall's Theorem that $H_1^*$ has a perfect matching.
Indeed, for $R \subseteq B_1$, we must show that $|N_{B_2}(R)| \ge |R|$ where $N_{B_2}(R)$ is the
set of neighbors (in $H^*_1$) of $R$ in $B_2$.
This trivially holds if $|R| \le n/19$ or $|R| \ge |B_1|-n/19$ by the minimum degree of
$H_1^*$. For $|R|$ within these two values, let $S=B_2 \setminus N_{B_2}(R)$.
Then, there is no edge between $R$ and $S$ in $H_1^*$ and since by Lemma \ref{l:friendly-edge}
there are fewer than $dkn$ unfriendly edges of $H_1$, we have that
there are fewer than
$dkn=\Theta(n)$ edges between $R$ and $S$ in $H_1$, thus also in $L_\pi(T)$.
But since $|R| \ge n/19 \ge \epsilon n$, we must have by Property 1 that $|S| \le \epsilon n$.
But then,
$$
|N_{B_2}(R)| = |B_2|-|S| = |B_1|-|S| \ge |B_1| - \epsilon n \ge |B_1|-n/19 \ge |R|\;.
$$

We construct the perfect $2$-set $P_2$ as follows.
We take a perfect matching in $H_1^*$.
We then take from each element of the absorber $\cup_{i=2}^{k-1} Q^*_i$ the pair of vertices with one endpoint in
$A_1^*$ and the other in $A_2^*$. Observe that since the elements of $Q^*_i$ are $i$-cliques, then each
chosen pair is a matching edge. 
Finally, we arbitrarily pair the remaining $k-1$ vertices of
$A_1 \setminus A_1^*$ with the remaining $k-1$ vertices of $A_2 \setminus A_2^*$. These $k-1$ pairs
are not necessarily edges of $L_\pi(T)$. Altogether $P_2$ is a perfect $2$-set,
containing a subset $P_2^*$ of $n-k+1$ elements that are edges of $L_\pi(T)$ matching the vertices of $A_1^*$ with the vertices of $A_2^*$.

In fact, the $P_2$ that we have just constructed satisfies the case $r=2$ of the following definition.
\begin{definition}[Extendable perfect $r$-set]\label{d:extend-r-set}
	Let $2 \le r \le k$. A perfect $r$-set $P_r$ is called {\em extendable} if the following holds.
	\begin{enumerate}
		\item 
		There is a subset $P_r^* \subset P_r$ of order $n-k+1$ such that each element of $P_r^*$ induces an $r$-clique in $L_\pi(T)$.
		Furthermore, each element of $P_r^*$ contain a single vertex from $A_i^*$ for $i=1,\ldots,r$.
		\item
		The first $r$-vertices of each element of $\cup_{i=r}^{k-1} Q^*_i$, form an element of $P_r^*$.
		In particular $Q_r^* \subset P_r^*$.
		\item
		For each element $p=(a_1,\ldots,a_r)$ of $P_r^*$, and for each $r < t \le k$, the number of common neighbors of $a_1,\ldots,a_r$ in $A_t$ is at least $n/d^{2r-2}$.
	\end{enumerate}
\end{definition}
Observe that in the above definition, if $r=k$ then an extendable perfect $k$-set just needs to satisfy the first requirement, as the other two become empty requirements. In particular, an extendable perfect $k$-set
contains $n-k+1$ elements, each of which is a $k$-clique in $L_\pi(T)$.
So, if we can find an extendable perfect $k$-set, we have proved Theorem \ref{t:1-equiv}.
Indeed, the following lemma shows that we can.
\begin{lemma}\label{l:extend}
	Let $2 \le r \le k-1$. If there is an extendable perfect $r$-set, then there is an extendable perfect
	$(r+1)$-set. Consequently, if there is an extendable perfect $2$-set, then Theorem \ref{t:1-equiv} holds.
\end{lemma}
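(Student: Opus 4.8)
The plan is to prove the inductive step directly, by a Hall-type argument on a bipartite extension graph in which the absorber supplies the degree missing at the (possibly very low degree) vertices of $A_{r+1}^*$, while a friendly-edge device copied from Lemma~\ref{l:friendly-edge} guarantees that the matched edges propagate Condition~3 of Definition~\ref{d:extend-r-set} to level $r+1$. Fix $2\le r\le k-1$ and an extendable perfect $r$-set $P_r$ with distinguished subset $P_r^*$. Form the bipartite graph $G_r$ with parts $P_r^*$ and $A_{r+1}^*$ (each of size $n-k+1$), joining $p$ to $v$ exactly when $p\cup\{v\}$ induces an $(r+1)$-clique of $L_\pi(T)$; since every $p\in P_r^*$ already induces an $r$-clique, $G_r$ is exactly $L_\pi(P_r,T)$ restricted to $P_r^*\times A_{r+1}^*$. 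Call an edge $pv$ of $G_r$, where $p=(a_1,\dots,a_r)$, \emph{good} if for every $r+1<t\le k$ the vertices $a_1,\dots,a_r,v$ have at least $n/d^{2r}$ common neighbours in $A_t$ (a vacuous condition when $r=k-1$), and let $G_r^g$ be the spanning subgraph of good edges. A good edge $pv$ is precisely one whose extended tuple $p\cup\{v\}$ satisfies Condition~3 at level $r+1$, so a perfect matching $M$ of $G_r^g$ between $P_r^*$ and $A_{r+1}^*$ finishes the step: setting $P_{r+1}^*=\{p\cup\{v\}:pv\in M\}$ and appending the $k-1$ vertices of $A_{r+1}\setminus A_{r+1}^*$ to the $k-1$ remaining tuples of $P_r\setminus P_r^*$ produces an extendable perfect $(r+1)$-set. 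Iterating from $r=2$ to $r=k-1$ turns the extendable perfect $2$-set built above into an extendable perfect $k$-set, whose distinguished part is a family of $n-k+1$ pairwise disjoint $k$-cliques of $L_\pi(T)$; combined with Lemmas~\ref{l:all-properties} and~\ref{l:successful}, Theorem~\ref{t:1-equiv} follows.

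Three facts feed the matching. First, a friendly-edge lemma: every $p\in P_r^*$ is incident with fewer than $dk$ non-good edges of $G_r$. Its proof copies that of Lemma~\ref{l:friendly-edge}: if $p$ is in $d$ non-good edges to distinct $v_1,\dots,v_d$, all bad towards one part $A_t$, write $N_t(p)\subseteq A_t$ for the common neighbourhood of $a_1,\dots,a_r$ (of size at least $n/d^{2r-2}$ by Condition~3 of $P_r$); each $v_i$ is adjacent to fewer than $n/d^{2r}$ vertices of $N_t(p)$, so more than $n(d^{2-2r}-d^{1-2r})$ vertices of $A_t$ are non-neighbours of all the $v_i$, some $\pi$-interval determined by $v_1,\dots,v_d$ contains more than $n(d^{2-2r}-d^{1-2r})/(d+1)$ of them, and these lie in one set $C_D((v_1,\dots,v_d))\cap A_t$, which Property~4 caps by $n2^{-d}+n^{2/3}$, contradicting the second inequality defining $d$ in Definition~\ref{d:constants}. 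Second, Condition~3 of $P_r$ gives every $p\in P_r^*$ at least $n/d^{2r-2}-(k-1)$ neighbours in $A_{r+1}^*$, so discarding the fewer than $dk$ non-good ones leaves it with more than $\epsilon n$ neighbours in $G_r^g$. Third — the point of the absorber — every $v\in A_{r+1}^*$ has at least $\delta\rho n/4>\epsilon n$ good neighbours lying inside $Q_r^*\subseteq P_r^*$; for this I use the absorber of Section~5 with a mild strengthening of Lemma~\ref{l:successful}, requiring there that each $v$ have at least $\delta\rho n/4$ elements $p$ of $Q_r^*$ with $p\cup\{v\}$ not merely an $(r+1)$-clique but a \emph{good} one. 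This costs only a constant factor in that probability estimate: one first deletes from the set $M^*$ in the proof those friendly $r$-cliques whose union with $v$ is unfriendly towards some later part, a loss bounded exactly as in the friendly-edge lemma via Property~4. The second and third facts give $G_r^g$ a two-sided minimum degree exceeding $\epsilon n$.

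Next I install the part of the matching forced by the higher-level absorber cliques. For each $q\in\bigcup_{i=r+1}^{k-1}Q_i^*$ its first $r$ vertices form an element $p_q\in P_r^*$ (Condition~2 of $P_r$), its $(r+1)$-st vertex $w_q$ lies in $A_{r+1}^*$, and $p_qw_q$ is a good edge of $G_r^g$ — an edge because the first $r+1$ vertices of $q$ form an $(r+1)$-clique, and good by the friendliness of $q$ together with the choice of $d$. Since absorber cliques are pairwise vertex-disjoint, $M_0=\{p_qw_q:q\in\bigcup_{i=r+1}^{k-1}Q_i^*\}$ is a matching; deleting its vertices leaves $Z:=P_r^*\setminus\{p_q\}$ and $Y:=A_{r+1}^*\setminus\{w_q\}$, each of size $N:=n-k+1-(k-1-r)m$. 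Crucially $Q_r^*\subseteq Z$, because the absorber cliques are vertex-disjoint, so $G_r^g[Z\times Y]$ still has minimum degree exceeding $\epsilon n$ on both sides: on the $Y$-side through the $Q_r^*$-edges of the third fact, and on the $Z$-side because removing the at most $(k-1-r)m\le 2(k-2)\delta n$ vertices $\{w_q\}$ is negligible next to $n/d^{2r-2}$. A Hall check in the style of Lemma~\ref{l:case-1} then yields a perfect matching $M_1$ of $G_r^g[Z\times Y]$: for $S\subseteq Y$ with $|S|\le\epsilon n$ the minimum degree suffices; for $\epsilon n<|S|$, if $|N_{G_r^g}(S)\cap Z|<|S|$ then $R:=Z\setminus N_{G_r^g}(S)$ is nonempty, and if $|R|\ge\epsilon n$ then $R$ and $S$ span no good edge, hence fewer than $|R||S|/2^{r+1}$ edges of $L_\pi(P_r,T)$ altogether (only the $O(n)$ non-good ones), contradicting Property~1 applied with $R\subseteq P_r$ and $S\subseteq A_{r+1}$; while if $|R|<\epsilon n$ then any $p\in R$ has more than $\epsilon n$ good neighbours in $Y\setminus S$, forcing $|S|<N-\epsilon n$ and hence $|R|>\epsilon n$, a contradiction.

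Then $M:=M_0\cup M_1$ is a perfect matching of $G_r^g$ between $P_r^*$ and $A_{r+1}^*$, and the resulting $P_{r+1}$ is extendable: Condition~1 holds because each matched pair yields an $(r+1)$-clique with one vertex in each of $A_1^*,\dots,A_{r+1}^*$; Condition~2 holds because $M\supseteq M_0$ keeps the first $r+1$ vertices of each $q\in\bigcup_{i\ge r+1}Q_i^*$ inside $P_{r+1}^*$ (in particular $Q_{r+1}^*\subseteq P_{r+1}^*$); and Condition~3 holds because every edge of $M$ is good. The main obstacle is the three-way tension in the previous paragraph — the matching must contain the pre-committed edges $M_0$, must use only good edges, and must still find partners for the adversarially low-degree vertices of $A_{r+1}^*$ — which is reconciled only because the step-$r$ absorber $Q_r^*$ is vertex-disjoint from all higher-level absorber cliques (so deleting $V(M_0)$ leaves intact the degree $Q_r^*$ provides) and because $Q_r^*$ was built from friendly cliques, which is exactly what makes the strengthened extension property of Lemma~\ref{l:successful} available.
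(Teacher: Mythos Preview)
Your overall architecture is the paper's: form the bipartite extension graph between $P_r^*$ and $A_{r+1}^*$, prune to ``good'' (the paper says ``friendly'') edges, use Condition~3 for the degree on the $P_r^*$-side and the absorber for the degree on the $A_{r+1}^*$-side, then apply Hall with Property~1, and glue in the prefixes of the higher absorber cliques. Your sets $Z$ and $Y$ are exactly the paper's $J_r$ and $B_{r+1}$, and your $p$-side friendly-edge count matches the second half of the paper's Lemma~\ref{l:friendly-edge-hr}.

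There is, however, a genuine gap in your ``third fact''. You claim that each $v\in A_{r+1}^*$ has at least $\delta\rho n/4$ \emph{good} neighbours in $Q_r^*$, and you propose to obtain this by strengthening Lemma~\ref{l:successful}: in that proof, delete from $M^*$ those friendly $r$-cliques $p$ for which $p\cup\{v\}$ is not good, ``a loss bounded exactly as in the friendly-edge lemma via Property~4''. This does not work. Your friendly-edge lemma bounds, for a fixed $p$, the number of bad $v$'s (among disjoint vertices of one part); here you need the opposite: for a fixed $v$, to bound the number of bad $p$'s inside $M^*$. But $M^*$ has $\Theta(n^r)$ elements that are far from disjoint, Property~4 only controls consistency with a sequence of at most $d$ vertices from a single part, and there is no reason the bad $p$'s in $M^*$ should number $o(n^r)$: a friendly $r$-clique guarantees $\ge n/2^{r+1}$ common neighbours in $A_t$, but says nothing about how many of these lie in $v$'s neighbourhood there.

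The paper handles this point differently, and its fix slots straight into your argument. It proves the $v$-side of the friendly-edge bound (the first half of Lemma~\ref{l:friendly-edge-hr}): for any $v\in B_{r+1}$ and any $d$ \emph{pairwise disjoint} tuples $p_1,\dots,p_d\in J_r$ with each $p_iv$ non-good towards the same $A_t$, one gets a contradiction with Property~3 (the $\hat W$-inconsistency bound, not Property~4). Since $Q_r^*\subseteq J_r$ consists of disjoint tuples, this gives directly that $v$ has fewer than $dk$ non-good edges into $Q_r^*$; combined with the \emph{unstrengthened} absorber property from Lemma~\ref{l:successful}, $v$ has at least $\delta\rho n/4-dk$ good neighbours in $Q_r^*$, which is what you need. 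So no modification of the absorber construction is required; you only have to add the $v$-side friendly-edge argument, using Property~3 rather than Property~4.
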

Before proving Lemma \ref{l:extend}, we first need to verify that $P_2$ that we have constructed above,
is an extendable perfect $2$-set.
\begin{lemma}\label{l:p2}
	$P_2$ is an extendable perfect $2$-set.
\end{lemma}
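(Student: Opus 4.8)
The plan is to take the set $P_2$ just constructed and verify directly the three requirements of Definition~\ref{d:extend-r-set} in the case $r=2$, with $P_2^*$ taken to be the perfect matching of $H_1^*$ together with the length-$2$ prefixes of the absorber cliques. The first order of business is to notice that $P_2$ is indeed a perfect $2$-set: it is the $n-k+1$ disjoint edges above, together with the $k-1$ arbitrary pairs on $A_1\setminus A_1^*$ and $A_2\setminus A_2^*$, so it is $n$ pairwise disjoint pairs covering $A_1$ and $A_2$.

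Requirements 1 and 2 are then just bookkeeping. The matching of $H_1^*$ contributes $|B_1|=n-k+1-(k-2)m$ edges inside $B_1\times B_2$, and the length-$2$ prefixes of the elements of $\bigcup_{i=2}^{k-1}Q_i^*$ contribute $\sum_{i=2}^{k-1}|Q_i^*|=(k-2)m$ further pairs, each of them inside $(A_1^*\setminus B_1)\times(A_2^*\setminus B_2)$; since distinct absorber cliques are vertex-disjoint, all $n-k+1$ of these are pairwise disjoint, each is an edge of $L_\pi(T)$ (matching edges lie in $H_1^*\subseteq L_\pi(T)$, while a prefix pair of an element of $Q_i^*$ is an edge of that $i$-clique), and each meets $A_1^*$ and $A_2^*$ in exactly one vertex. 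This is requirement~1. Requirement~2 holds by construction, since we placed the first two vertices of every element of $\bigcup_{i=2}^{k-1}Q_i^*$ into $P_2^*$, and the elements of $Q_2^*$ are their own length-$2$ prefixes, so $Q_2^*\subseteq P_2^*$.

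The content is requirement~3: for every $p=(a_1,a_2)\in P_2^*$ and every $2<t\le k$, the vertices $a_1,a_2$ must have at least $n/d^{2}$ common neighbors in $A_t$ in $L_\pi(T)$. I would split $P_2^*$ according to how $p$ arose. If $p$ is a matching edge of $H_1^*$, then $p$ is, by construction, a \emph{friendly $H_1$-edge}, so it has at least $n/d^2$ common neighbors in $B_t\subseteq A_t$ for every $3\le t\le k$, which is exactly what is needed. If $p\in Q_2^*$, then $p$ is a friendly $2$-clique, and Definition~\ref{d:friendly-clique} (applied with $r'=2$) gives at least $n/2^{3}\ge n/d^2$ common neighbors in $A_t$ for every $3\le t\le k$, using that $d\ge 3$ by Definition~\ref{d:constants}. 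If $p=(a_1,a_2)$ is the length-$2$ prefix of a friendly $i$-clique $(a_1,\dots,a_i)\in Q_i^*$ with $3\le i\le k-1$, then for every $i<t\le k$ the friendliness of that clique (again with $r'=2$) yields at least $n/2^{3}\ge n/d^2$ common neighbors in $A_t$.

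The case I expect to be the crux is the remaining one: $p$ a prefix of an $i$-clique of $Q_i^*$ and a part $A_t$ with $3\le t\le i$, since the bare property of being a ``friendly $i$-clique'' only controls the parts $A_t$ with $t>i$ and says nothing about the parts through which the clique itself runs. To close this, I would use that such short prefixes are automatically good: running the proof of Lemma~\ref{l:friendly-edge} with $A_1^*,A_2^*,A_t$ in place of $B_1,B_2,B_t$ (a friendly vertex of $A_1^*$ still has at least $n/17$ neighbors in each $A_t$) shows that every friendly vertex of $A_1^*$ is incident with only $O(1)$ edges of $L_\pi(T)$ into $A_2^*$ whose two endpoints have fewer than $n/d^2$ common neighbors in some $A_t$, $3\le t\le k$. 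Hence all but $O(n^{i-1})$ of the $\Theta(n^i)$ friendly $i$-cliques induced by $\bigcup_{\ell=1}^{i}A_\ell^*$ (their count being $\Theta(n^i)$ by Property~2) have a length-$2$ prefix with at least $n/d^2$ common neighbors in every $A_t$, $3\le t\le k$, and it is among these that the absorber $Q_i^*$ is chosen. Combining the three cases then gives requirement~3 for every element of $P_2^*$, and with it the lemma.
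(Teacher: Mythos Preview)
Your verification of requirements 1 and 2 is fine and matches the paper. For requirement 3, your treatment of the $H_1^*$-matching edges and of elements of $Q_2^*$ is exactly what the paper does. The paper handles the remaining case (length-$2$ prefixes of elements of $Q_i^*$, $i\ge 3$) in one line: ``since $p'$ is a friendly $i$-clique, for $2<t\le k$ the vertices $a_1,a_2$ have at least $n/8>n/d^2$ common neighbors in $A_t$.''

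You are right to flag the range $3\le t\le i$: as Definition~\ref{d:friendly-clique} is literally written (with $r<t\le k$), friendliness of an $i$-clique says nothing about parts $A_t$ with $t\le i$, so the paper's one-line argument looks incomplete. However, your proposed patch does not work as stated. You assert that ``it is among these [cliques with good length-$2$ prefixes] that the absorber $Q_i^*$ is chosen,'' but the construction in Section~5 selects $Q_i^*$ uniformly from $Q'_i$, which consists of \emph{all} friendly $i$-cliques avoiding earlier absorber vertices; no prefix condition is imposed. To make your route go through you would have to go back and redo the absorber stage with the extra restriction and re-prove Lemma~\ref{l:successful}, and you would have to do this not just for length-$2$ prefixes but for all prefix lengths $r'\le i$ (the same issue reappears verbatim in Case~1 of the proof of Lemma~\ref{l:extend}).

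The clean resolution---and evidently what the paper intends, given how friendliness is invoked both here and in Lemma~\ref{l:extend}---is to read Definition~\ref{d:friendly-clique} with the quantifier ``for every $r'<t\le k$'' in place of ``for every $r<t\le k$.'' With that reading, a friendly $i$-clique automatically has, for every $r'\le i$ and every $t>r'$, at least $n/2^{r'+1}$ common $A_t$-neighbours of its first $r'$ vertices, and the paper's short argument (and yours for $t>i$) covers all $2<t\le k$ at once. The proof of Lemma~\ref{l:Y-property} is unaffected by this broader reading: for each pair $(r',t)$ the count of common neighbours in $A_t$ is still $\mathcal{B}(n,2^{-r'})$ since $a_1,\dots,a_{r'}\notin A_t$, and there are still at most $k^2$ such pairs. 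So rather than reconstructing the absorber, the right move is to invoke the intended (slightly stronger) notion of friendly clique.
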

\begin{proof}
	The first two requirements follow immediately from our construction. For the third requirement,
	consider some element of $p \in P_{2}^*$ where $p=(a_1,a_2)$. Then there are two cases.
	Case 1: $p$ is the prefix of some element $p' \in Q^*_i$ where $2 \le i \le k-1$ (if $i=2$ then $p=p'$
	is just an element of $Q^*_{2}$). But since $p'$ is an element of the absorber,
	it is a friendly $i$-clique. But this means that for $2 < t \le k$, the vertices
	$a_1,a_2$ have (in $L_\pi(T)$) at least $n/8 > n/d^2$ common neighbors in $A_t$,
	so the third requirement is met. Case 2: $p$ is the result of the matching in $H^*_1$ which matched
	$a_1 \in B_1$ with $a_2 \in B_2$. But then, $a_1a_2$ is a friendly
	$H_1$-edge. But this means that if $2 < t \le k$, the number of common neighbors of $a_1,a_2$
	in $B_t$ is at least $n/d^2$ and again the third requirement is met.
\end{proof}

\noindent
{\em Proof of Lemma \ref{l:extend}.}
Suppose that we are given an extendable perfect $r$-set $P_r$. We wish to use it and construct an
extendable perfect $(r+1)$-set $P_{r+1}$.

Consider the graph $L_\pi(P_r,T)$ and recall that, in particular, it is a bipartite graph with one part being $P_r$ and the other part being $A_{r+1}$.
Let $H_r$ be the induced bipartite subgraph of
$L_\pi(P_r,T)$ where one side is $B_{r+1}$ and the other side is the set of elements of $P^*_r$ that do
not contain vertices of $\cup_{i=r+1}^{k-1} Q^*_i$. Denote this other side by $J_r$
and observe that all $m$ elements of $Q_r^*$ remain elements of $J_r$.
First observe that both sides are of the same size: $|B_{r+1}|=|J_r|=(n-k+1)-m(k-1-r)$.

Once again we would like to prove that $H_r$ has a perfect matching but also that the matching edges
the we choose are ``nice''. For this, we first establish a minimum degree bound for $H_r$.
\begin{lemma}\label{l:mindeg-hr}
	The minimum degree of $H_r$ is at least $\delta \rho n/4$.
\end{lemma}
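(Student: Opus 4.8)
The plan is to show that every vertex on each side of $H_r$ has degree at least $\delta\rho n/4$ by splitting into the two types of vertices. The easy side is $J_r$: an element $p\in J_r\subseteq P_r^*$ is an $r$-clique in $L_\pi(T)$, and by the third requirement of Definition \ref{d:extend-r-set} it has at least $n/d^{2r-2}$ common neighbors in $A_{r+1}$. I would intersect this set with $B_{r+1}$, losing at most the $(k-2)m\le 2\delta kn$ vertices of $A_{r+1}^*$ that lie in the absorber plus the $k-1$ vertices outside $A^*_{r+1}$; since $\delta\le 1/(2kd^{2k})$, the quantity $n/d^{2r-2}-2\delta kn-(k-1)$ is comfortably larger than $\delta\rho n/4$. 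Hence every vertex of $J_r$ has the required degree, and in particular the $m$ elements of $Q_r^*$ are fine.

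The genuinely important case is a vertex $v\in B_{r+1}$. Here I would invoke the absorber property established in Lemma \ref{l:successful}: since $v\in A^*_{r+1}$, there are at least $\delta\rho n/4$ elements of $Q_r^*$ such that each of them together with $v$ induces an $(r+1)$-clique in $L_\pi(T)$. By construction $Q_r^*\subseteq J_r$, and ``$q\cup\{v\}$ is an $(r+1)$-clique in $L_\pi(T)$'' means precisely that $v$ is adjacent in $L_\pi(T)$ to all $r$ vertices of $q$, i.e. that $qv$ is an edge of $L_\pi(P_r,T)$ and hence of its induced subgraph $H_r$. Therefore $\deg_{H_r}(v)\ge \delta\rho n/4$, which is exactly the claimed bound.

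Combining the two cases gives the minimum degree bound for $H_r$. The main obstacle, such as it is, is just bookkeeping the small losses: on the $J_r$ side one must check that intersecting the common-neighbourhood with $B_{r+1}$ (rather than $A_{r+1}$) and the discarding of absorber vertices does not eat into the bound, which reduces to the inequality $d^{2r-2}\le d^{2k}$ and the definition $\delta\le 1/(2kd^{2k})$ from Definition \ref{d:constants}. On the $B_{r+1}$ side there is essentially nothing to do beyond unwinding the definition of $H_r$ and citing the absorber; the real work was already done in Section 5. Note that it is precisely here that the absorber is used: without it, a ``dangerous'' vertex of $B_{r+1}$ might have very few $r$-cliques of $J_r$ available to extend it, and no Hall-type argument on $H_r$ could then proceed.
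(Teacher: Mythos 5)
Your proof follows exactly the paper's decomposition: the $B_{r+1}$ side is handled by citing the absorber property from Lemma~\ref{l:successful} and observing $Q_r^*\subseteq J_r$, and the $J_r$ side by the third requirement of Definition~\ref{d:extend-r-set} intersected with $B_{r+1}$. The only divergence is a small arithmetic imprecision on the $J_r$ side. You bound the loss from passing from $A_{r+1}$ to $B_{r+1}$ by $2\delta kn+(k-1)$, whereas the paper uses $|B_{r+1}|\ge n(1-\delta k)$ from~(\ref{e:br}), i.e.\ a loss of only $\delta kn$. With your cruder $2\delta kn$, the stated ``reduction to $d^{2r-2}\le d^{2k}$ and $\delta\le 1/(2kd^{2k})$'' no longer closes: those two facts alone give $1/d^{2r-2}-2\delta k\ge 1/d^{2k}-1/d^{2k}=0$, not $\ge\delta\rho/4$. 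The bound you want does hold, but you need to use the strict gap $2r-2\le 2k-4$ (since $r\le k-1$ here), which gives $1/d^{2r-2}\ge d^4/d^{2k}$ and hence ample slack; alternatively, redo the count as the paper does to get the sharper $\delta kn$ loss, after which the chain $1/d^{2r-2}-\delta k\ge 1/d^{2k}-\delta k\ge 2\delta k-\delta k=\delta k\ge\delta\rho/4$ goes through with the non-strict inequality. Either fix is routine, so this is an easily repairable imprecision in the bookkeeping rather than a gap in the argument; the substantive content (absorber for $B_{r+1}$, extendability condition for $J_r$) matches the paper's proof.
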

\begin{proof}
	Consider first some vertex $v \in B_{r+1}$ (recall that the sides of $H_r$ are $B_{r+1}$ and $J_r$).
	Then, since all elements of $Q_r^*$ are elements of $J_r$, we have by the property of the
	absorber that $v$ has at last $\delta \rho n/4$ neighbors in $H_r$ (in fact, already in $Q_r^*$).
	Consider next a vertex $p \in J_r$. Then $p=(a_1,\ldots,a_r)$ and $\{a_1,\ldots,a_r\}$ induce an $r$-clique.
	Since $J_r \subset P^*_r$, we have by the third property of extendable perfect $r$-sets,
	that the number of common neighbors of $a_1,\ldots,a_r$ in $A_{r+1}$ is at least $n/d^{2r-2}$.
	But by (\ref{e:br}) we have that $|B_{r+1}| \ge n(1-\delta k)$ so 
	the number of common neighbors of $a_1,\ldots,a_r$ in $B_{r+1}$ is at least $n/d^{2r-2}-\delta k n$.
	In other words, $p$ has at least $n/d^{2r-2}-\delta k n$ neighbors in $H_r$.
	Since $1/d^{2r-2}-\delta k \ge 1/d^{2k} - \delta k \ge 2\delta k - \delta k \ge \delta k \ge \delta \rho/4$,
	the lemma follows.
\end{proof}

\begin{definition}[Friendly $H_r$ edge]
	Let $2 \le r \le k-1$. An edge $pv$ of $H_r$ where $p=(a_1,\ldots,a_r)$ and $v \in B_{r+1}$ is {\em friendly} if for each $r+2 \le t \le k$, the number of common neighbors of $\{a_1,\ldots,a_r,v\}$ in $B_t$ is at least $n/d^{2r}$. Otherwise, $pv$ is {\em unfriendly}. Observe that every edge of $H_{k-1}$ is friendly.
\end{definition}
\begin{lemma}\label{l:friendly-edge-hr}
	Let $2 \le r \le k-1$. Every vertex of $H_r$ is incident with fewer than $dk$ unfriendly edges of $H_r$.
\end{lemma}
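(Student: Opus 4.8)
The plan is to mimic the proof of Lemma \ref{l:friendly-edge}, which is the $r=1$ analogue, but now working in the bipartite graph $H_r$ whose parts are $B_{r+1}$ and $J_r$. First I would fix a vertex of $H_r$ and suppose for contradiction that it is incident with at least $dk$ unfriendly edges. There are two cases depending on which side the vertex lies in; as in Lemma \ref{l:friendly-edge} the two cases are essentially symmetric in spirit, though here the two sides are of different ``type'' (one side consists of $r$-tuples, the other of single vertices of $A_{r+1}$), so the argument has to be spelled out for each. Take first $v \in B_{r+1}$ incident with unfriendly edges $vp_1,\dots,vp_{dk}$ where each $p_j \in J_r$ is an $r$-clique. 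By the pigeonhole principle, since each unfriendly edge fails the friendliness condition for some $t$ with $r+2 \le t \le k$ (at most $k$ values), at least $d$ of these edges — say $vp_1,\dots,vp_d$ — are all unfriendly with respect to the same part $B_t$, meaning the common neighborhood of $\{a_1^{(j)},\dots,a_r^{(j)},v\}$ in $B_t$ has size less than $n/d^{2r}$ for each $j=1,\dots,d$.

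The next step is the counting argument. Since $v$ is a friendly vertex it has at least $\mu n = n/18$ neighbors in $B_t$ (using \eqref{e:br} to pass from $A_t$ to $B_t$ as done in Section 6), and removing the common neighbors of $v$ with each $p_j$ in $B_t$ deletes fewer than $d \cdot n/d^{2r} = n/d^{2r-1}$ vertices, so at least $\mu n - n/d^{2r-1} \ge \mu n - n/d$ vertices of $B_t$ are neighbors of $v$ but are non-neighbors of at least one vertex in each $p_j$. The subtlety compared to Lemma \ref{l:friendly-edge} is that each $p_j$ now carries $r$ vertices rather than one, so a vertex $w \in B_t$ being a ``bad witness'' for $p_j$ only guarantees that $w$ is a non-neighbor of \emph{one} of the vertices of $p_j$, not of a prescribed one. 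I would handle this by pigeonholing once more: among the $\ge \mu n - n/d$ bad witnesses, at least $(\mu n - n/d)/r^d$ of them are non-neighbors of the same coordinate (one chosen index $i_j \in \{1,\dots,r\}$ for each $j$), reducing to a configuration of $d$ prescribed vertices $a_{i_1}^{(1)},\dots,a_{i_d}^{(d)}$ (which lie in various parts $A_1,\dots,A_r$, not all the same) together with $v \in A_{r+1}$, all having a large set of common ``left/right'' relations in $B_t$. Actually a cleaner route, and the one I would follow, is to avoid introducing the index $r^d$ by applying Property 3 instead of Property 4: the set of bad witnesses in $A_t$ is exactly $I_{\hat W}(\hat p, t)$ for the sequence $\hat p = (p_1,\dots,p_d)$ of pairwise disjoint $r$-tuples and an appropriate $\hat W \in (\{+,-\}^r)^d$ recording the orientations, so by Property 3 its size is at most $n(1-1/2^r)^d + n^{2/3}$, and then passing to $B_t$ costs only an additional $\delta k n$.

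Putting these together yields $\mu n - n/d - \delta k n \le n(1-2^{-r})^d + n^{2/3}$ (taking a further pigeonhole on $\pi$-position as in Lemma \ref{l:friendly-edge} if one prefers Property 4, which introduces the factor $1/(d+1)$ and replaces the target by $n/2^d$); in either formulation the contradiction comes straight from the choice of $d$ in Definition \ref{d:constants}(iii), namely the inequality $(\mu - d^{1-2r})/(dr+1) > (1-2^{-r})^d$, after absorbing the lower-order terms $n^{2/3}$ and $\delta k n$ into the slack (here is where one uses that $n$ is large and $\delta = \delta\rho/5$ is tiny). The case $p \in J_r$ is handled the same way with the roles swapped: $p=(a_1,\dots,a_r)$ is incident with unfriendly edges $pv_1,\dots,pv_{dk}$, one pigeonholes to get $d$ of them unfriendly with respect to a common $B_t$, uses the third property of extendable $r$-sets to get $\ge n/d^{2r-2}$ common neighbors of $a_1,\dots,a_r$ in $A_t$ (hence $\ge n/d^{2r-2} - \delta k n$ in $B_t$), deletes the $< d\cdot n/d^{2r}$ common neighbors coming from the $v_j$'s, orders $v_1,\dots,v_d$ by $\pi$ and applies Property 4 with $s=t$, $\ell=r+1$ to bound the number of vertices of $B_t$ squeezed between consecutive $v_j$'s and non-adjacent to all of them by $n/2^d + n^{2/3}$; the contradiction is again the defining inequality for $d$, this time the second one, $(d^{2-2r} - d^{1-2r})/(d+1) > 1/2^d$. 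The main obstacle, and the only genuinely new point over Lemma \ref{l:friendly-edge}, is bookkeeping the fact that each $p_j$ is an $r$-tuple rather than a single vertex — i.e. choosing between the ``Property 3'' route (clean, but one must check the orientation vector $\hat W$ is well-defined and the $p_j$ are pairwise disjoint, which holds since distinct elements of $P_r^*$ are disjoint) and the ``Property 4 plus extra pigeonhole'' route — and making sure the accumulated error terms $n^{2/3}$ and $\delta k n$ stay within the constant slack built into Definition \ref{d:constants}.
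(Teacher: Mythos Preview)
Your plan follows the paper's approach closely, and Case~2 (the vertex $p\in J_r$) is essentially identical to what the paper does. There is, however, a genuine gap in your ``cleaner route'' for Case~1.

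You claim that the set of bad witnesses in $A_t$ is $I_{\hat W}(\hat p,t)$ for ``an appropriate $\hat W$ recording the orientations,'' and that therefore Property~3 applies directly with no $\pi$-position pigeonhole. This is not correct. The notion of $\hat W$-inconsistency is phrased in terms of edge directions in $T$, not in terms of adjacency in $L_\pi(T)$. For a witness $u\in B_t$, the statement ``$u$ is a non-neighbor of $a_{i,j}$ in $L_\pi(T)$'' translates into a statement about the direction of the edge $\{u,a_{i,j}\}$ in $T$ only once you know whether $\pi(u)<\pi(a_{i,j})$ or $\pi(u)>\pi(a_{i,j})$. Thus the vector $\hat W$ that encodes ``$u$ is a non-neighbor of some coordinate of each $p_i$'' depends on the position of $u$ relative to all $dr$ vertices of $p_1,\ldots,p_d$; different witnesses $u$ require different $\hat W$'s, and there is no single $\hat W$ covering the whole set $B^*$.

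The paper handles this exactly by the pigeonhole you thought you were avoiding: it lets $Z$ be the $dr$ vertices of $p_1,\ldots,p_d$, pigeonholes $B^*$ over the $dr+1$ intervals of $\pi$ that $Z$ determines to get $B^{**}$ with $|B^{**}|\ge |B^*|/(dr+1)$, and \emph{then} applies Property~3 to bound $|B^{**}|\le n(1-2^{-r})^d+n^{2/3}$. This is precisely why the denominator in the first inequality of Definition~\ref{d:constants}(iii) is $dr+1$ rather than $d+1$, and why the right-hand side is $(1-2^{-r})^d$ rather than $2^{-d}$. Your parenthetical alternative (Property~4 with factor $1/(d+1)$ and target $n/2^d$) does not apply here either, since Property~4 concerns sequences of vertices all lying in a single part $A_\ell$, whereas the vertices of $p_1,\ldots,p_d$ are spread over $A_1,\ldots,A_r$. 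In short: Property~3 is the right tool and does eliminate the $r^d$ coordinate-choice pigeonhole you were worried about, but the $\pi$-position pigeonhole (now over $dr+1$ intervals) is still required.
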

\begin{proof}
	As the lemma is trivial for $r=k-1$ (all edges of $H_{k-1}$ are friendly), we assume $2 \le r \le k-2$.
	The vertices of $H_r$ are $J_r \cup B_{r+1}$. Assume first that $v \in B_{r+1}$
	is incident with $dk$ unfriendly edges of $H_r$.
	Let these edges be $p_1v,\ldots,p_{dk}v$ where $p_i \in J_r$.
	As they are all unfriendly, there is some $r+2 \le t \le k$ such that at least $d$ of them
	are unfriendly with respect to $B_t$.
	Suppose these are $p_1v,\ldots,p_dv$.
	Let $p_i=(a_{i,1},\ldots,a_{i,r})$ with $a_{i,j} \in A_j$.
	Since $v$ has at least $n/18=\mu n$ neighbors in $B_t$, at least
	$\mu n-dn/d^{2r}=\mu n - n/d^{2r-1}$ of them are not in the common neighborhood of all of $p_1,\ldots,p_d$.
	Let this set be $B^* \subseteq B_t$. So, $|B^*| \ge \mu n-n/d^{2r-1}$ and each $u \in B^*$ has the property
	that for all $i=1,\ldots,d$, there is some $a_{i,j}$ such that $a_{i,j}$ and $u$ are not neighbors
	in $L_\pi(T)$.
	
	Let $Z$ denote the set of $dr$ vertices of $p_1,\ldots,p_d$. So, there is a $B^{**} \subseteq B^*$
	with $|B^{**}| \ge |B^*|/(dr+1)$ and a partition $Z_1 \cup Z_2 = Z$, such that every $u \in B^{**}$
	appears in $\pi$ after all vertices of $Z_1$ and before all vertices of $Z_2$ (possibly $Z_1=\emptyset$
	or $Z_2=\emptyset$). Let $W_i \in \{+,-\}^r$ be the vector with $W_i(j)=(+)$ if $a_{i,j} \in Z_2$
	and $W_i(j)=(-)$ if $a_{i,j} \in Z_1$. Let $\hat{W}=(W_1,\ldots,W_d)$ and let $\hat{p}=(p_1,\ldots,p_d)$.
	Than, every vertex $u \in B^{**}$ is $\hat{W}$-inconsistent with $\hat{p}$.
	Hence, $B^{**} \subseteq I_{\hat{W}}(\hat{p},t)$. By Property 3, 
	$|B^{**} | \le n(1-1/2^r)^d+n^{2/3}$.
	On the other hand, $|B^{**}| \ge |B^*|/(dr+1) \ge (\mu n-n/d^{2r-1})/(dr+1)$.
	But by Definition \ref{d:constants},
	$$
	\frac{\mu n-n/d^{2r-1}}{dr+1}  > n(1-1/2^r)^d+n^{2/3}\;,
	$$
	a contradiction.
	
	Assume next that $p=(a_1,\ldots,a_r) \in J_r$ is incident with $dk$ unfriendly edges of $H_r$.
	Let these edges be $pv_1,\ldots,pv_{dk}$ where $v_i \in B_{r+1}$.
	As they are all unfriendly, there is some $r+2 \le t \le k$ such that at least $d$ of them
	are unfriendly with respect to $B_t$.
	Suppose these are $pv_1,\ldots,pv_d$.
	Since $p \in J_r \subseteq P^*_r$, we have by the property of an extendable $r$-set that
	there are at least $n/d^{2r-2}$ common neighbors of $\{a_1,\ldots,a_r\}$ in $B_t$.
	So, there is a set $B^* \subseteq B_t$ with $|B^*| \ge n/d^{2r-2}-d(n/d^{2r})=n/d^{2r-2}-n/d^{2r-1}$
	such that each $u \in B^*$ is a non-neighbor of each of $v_1,\ldots,v_d$.
	Assume that $\pi(v_1) < \pi(v_2) \cdots < \pi(v_d)$. So, there is a subset
	$|B^{**}| \ge |B^*|/(d+1)$ such that every $u \in B^{**}$
	appears in $\pi$ after $v_i$ and before $v_{i+1}$ (or else before $v_1$ or else after $v_d$).
	But according to property 4 applied with $s=t$, $\ell=r+1$ and $W(j)=(-)$ if $j \le i$ and
	$W(j)=(+)$ if  $d \ge j \ge i+1$, then number of such vertices is at most $n2^{-d}+n^{2/3}$.
	On he other hand, $|B^{**}| \ge |B^*|/(d+1) \ge (n/d^{2r-2}-n/d^{2r-1})/(d+1)$.
	But by Definition \ref{d:constants}
	$$
	\frac{n/d^{2r-2}-n/d^{2r-1}}{d+1} > \frac{n}{2^d}+n^{2/3}\;,
	$$
	a contradiction.
\end{proof}
Let therefore $H_r^*$ be the spanning subgraph of $H_r$ obtained after removing all unfriendly $H_r$-edges.
Then, by Lemmas \ref{l:mindeg-hr} and \ref{l:friendly-edge-hr}, the minimum degree of $H_r^*$ is at least
$\delta \rho n/4 - dk \ge \delta \rho n/5$.
We can now easily prove using Hall's Theorem that $H_r^*$ has a perfect matching.
Indeed, for $R \subseteq J_r$, we must show that $|N_{B_{r+1}}(R)| \ge |R|$ where $N_{B_{r+1}}(R)$ is the
set of neighbors (in $H^*_r$) of $R$ in $B_{r+1}$.
This trivially holds if $|R| \le \delta \rho n/5$ or $|R| \ge |B_{r+1}|-\delta \rho n/5$ by the minimum degree of
$H_r^*$. For $|R|$ within these two values, let $S=B_{r+1} \setminus N_{B_{r+1}}(R)$.
Then, there is no edge between $R$ and $S$ in $H_r^*$ and hence by Lemma \ref{l:friendly-edge-hr}
there are fewer than $dkn=\Theta(n)$ edges between $R$ and $S$ in $H_r$, thus also in $L_\pi(P_r,T)$.
But since $|R| \ge \delta \rho n/5  \ge \epsilon n$, we must have by Property 1 that
$|S| \le \epsilon n$. But then,
$$
|N_{B_{r+1}}(R)| = |B_{r+1}|-|S| \ge |B_{r+1}| - \epsilon n \ge |B_{r+1}|-\delta \rho n/5 \ge |R|\;.
$$

We construct $P_{r+1}$ as follows.
We take a perfect matching in $H_r^*$. Each such matching edge is of the form $pv$ with $p \in J_r \subset P^*_r$
and $v \in B_{r+1}$, so $p \cup \{v\}$ induces an $(r+1)$-clique.
We then take from each element of $\cup_{i=r+1}^{k-1} Q^*_i$ the $(r+1)$-tuple of vertices with one endpoint in $A_i^*$ for $i=1,\ldots,r+1$.
Observe that since the elements of $Q^*_i$ are $i$-cliques, then each
chosen $(r+1)$-tuple is an $(r+1)$-clique.
Finally, we arbitrarily match the remaining $k-1$ vertices of
$A_{r+1} \setminus A^*_{r+1}$ with the $k-1$ elements of $P_r \setminus P^*_r$
into $k-1$ additional $(r+1)$-tuples.
These $k-1$ additional $(r+1)$-tuples
are not necessarily $(r+1)$-cliques of $L_\pi(T)$.
Altogether $P_{r+1}$ is a perfect $(r+1)$-set
containing a subset $P_{r+1}^*$ of $n-k+1$ elements that are $(r+1)$-cliques of $L_\pi(T)$
matching the vertices of $A_i^*$ for $i=1,\ldots,r+1$.
In particular, the first two requirements in the definition of extendable perfect $(r+1)$-sets
are satisfied.

To complete the proof of Lemma \ref{l:extend} it remains to show that the third requirement 
in the definition of extendable perfect $(r+1)$-sets is also met.
Consider some element of $p \in P_{r+1}^*$ where $p=(a_1,\ldots,a_{r+1})$. Then there are two cases.
Case 1: $p$ is the prefix of some element $p' \in Q^*_i$ where $r+1 \le i \le k-1$ (if $i=r+1$ then $p=p'$
is just an element of $Q^*_{r+1}$). But since $p'$ is an element of the absorber,
it is a friendly $i$-clique. But this means that for $r+1 < t \le k$, the vertices
$a_1,\ldots,a_{r+1}$ have (in $L_\pi(T)$) at least $n/2^{r+2} > n/d^{2r}$ common neighbors in $A_t$,
so the third requirement is met. Case 2: $p$ is the result of the matching in $H^*_r$ which matched
$p'=(a_1,\ldots,a_r) \in J_r$ with $a_{r+1} \in B_{r+1}$. But then, $p'a_{r+1}$ is a friendly
$H_r$-edge. But this means that if $r+1 < t \le k$, the number of common neighbors of $\{a_1,\ldots,a_r,a_{r+1}\}$ in $B_t$ is at least $n/d^{2r}$ and again the third requirement is met.
\qed

\section*{Acknowledgment}
We thank the reviewers for very helpful suggestions.

\bibliographystyle{plain}

\bibliography{references}

\end{document}